\numberwithin{equation}{section}
\theoremstyle{plain}
\newtheorem{theorem}{Theorem}
\newtheorem{conjecture}[theorem]{Conjecture}
\newtheorem{lemma}[theorem]{Lemma}
\newtheorem{corollary}[theorem]{Corollary}
\newtheorem{proposition}[theorem]{Proposition}
\newtheorem*{theorem*}{Theorem}
\newtheorem*{conjecture*}{Conjecture}
\theoremstyle{definition}
\newtheorem{remark}[theorem]{Remark}
\newtheorem{definition}[theorem]{Definition}
\newcommand{\CC}{{\mathbb{C}}}
\newcommand{\QQ}{{\mathbb{Q}}}
\newcommand{\ZZ}{{\mathbb{Z}}}
\newcommand{\SL}{{\mathrm{SL}}}
\newcommand{\epi}{{\bf e}}
\newcommand{\Hom}{\mathrm{Hom}}
\newcommand{\Jac}{\mathrm{Jac}}
\newcommand{\Fix}{\mathrm{Fix}}
\newcommand{\Aut}{\mathrm{Aut}}
\newcommand{\id}{\mathrm{id}}
\newcommand{\bx}{{\bf x}}
\newcommand{\hess}{\mathrm{hess}}
\newcommand{\age}{\mathrm{age}}
\newcommand{\QR}[2]{
\left.\raisebox{0.5ex}{\ensuremath{#1}}
\ensuremath{\mkern-3mu}\middle/\ensuremath{\mkern-3mu}
\raisebox{-0.5ex}{\ensuremath{#2}}\right.}
\def\A{{\mathcal A}}
\def\C{{\mathcal C}}
\def\T{{\mathcal T}}
\begin{document}
\title{Orbifold Jacobian algebras for invertible polynomials}
\date{\today}
\author{Alexey Basalaev}
\address{Universit\"at Mannheim, Lehrsthul f\"ur Mathematik VI, Seminargeb\"aude A 5, 6, 68131 Mannheim, Germany}
\email{basalaev@uni-mannheim.de}
\author{Atsushi Takahashi}
\address{Department of Mathematics, Graduate School of Science, Osaka University, 
Toyonaka Osaka, 560-0043, Japan}
\email{takahashi@math.sci.osaka-u.ac.jp}
\author{Elisabeth Werner}
\address{Leibniz Universit\"at Hannover, Welfengarten 1, 30167 Hannover, Germany}
\email{werner.elisabeth@math.uni-hannover.de }
\begin{abstract}
An important invariant of a polynomial $f$ is its Jacobian algebra  defined by its partial derivatives.
Let $f$ be invariant with respect to the action of a finite group of  diagonal symmetries $G$.
We axiomatically define an orbifold Jacobian $\ZZ/2\ZZ$-graded algebra for the pair $(f,G)$ 
and show its existence and uniqueness in the case, when $f$ is an invertible polynomial.
In case when $f$ defines an ADE singularity, we illustrate its geometric meaning.
\end{abstract}
\maketitle
\tableofcontents
\section{Introduction}

Let $f=f(\bx)=f(x_1,\dots, x_N)\in\CC[x_1,\dots, x_N]$ be a polynomial such that 
the {\em Jacobian algebra} $\Jac(f):= \CC[x_1,\dots, x_N]/(\partial f/\partial x_1,\dots,\partial f/\partial x_N)$ of $f$ 
is a finite dimensional $\CC$-algebra.
In this paper, we shall give axioms which should characterize a generalization of the Jacobian algebra $\Jac(f)$ of $f$ 
for the pair $(f,G)$ where $G$ is a finite abelian group acting diagonally on variables which respects $f$.

Such a pair $(f,G)$, often called a {\em Landau-Ginzburg orbifold}, has been studied intensively by many mathematicians and physicists working in mirror symmetry
for more than twenty years since it yields important, interesting and unexpected geometric information. 
In particular, the so-called orbifold construction of a mirror manifold from a Calabi-Yau hypersurface is the cornerstone.

Certain works towards the definition of the Frobenius algebras associated to the pair $(f,G)$ were 
also done previously by R. Kaufmann and M. Krawitz.
In \cite{K1}, R.Kaufmann proposes a general construction of the {\em orbifolded Frobenius superalgebra} of $(f,G)$. 
In order to build such a $\ZZ/2\ZZ$-graded algebra, 
one should make a certain non-unique choice called a ``choice of a two cocycle''. 
A different choice of this cocycle gives indeed a different product structure. 
This construction was later used by Kaufmann in \cite{K2} for the mirror symmetry purposes from the point of view of physics.
In \cite{Kr}, M. Krawitz proposes a very special construction of a commutative (not a $\ZZ/2\ZZ$-graded) 
algebra, for the pair $(f,G)$. 
Later enhanced, in \cite{FJJS} this definition was used to set up the mirror symmetry on the level of Frobenius algebras. 
However, the crucial part of it remained to be the particularly fixed product that could only be well-defined 
for weighted-homogeneous polynomials. There is also no explanation why a particular product structure is chosen.

Mirror symmetry on the level of Frobenius algebras is a first step towards the mirror isomorphism of Frobenius manifolds where the key role is played by the so-called {\em primitive form}.
From the point of view of mirror symmetry, the algebras we consider here are those 
in the complex geometry side, the so-called B-model side. 
The major advantage of our work comparing to that of Kaufmann and Krawitz is that our construction works as a starting point for the mirror symmetry on the level of Frobenius manifolds having the notion of a primitive form (cf. \cite{S1,S2,ST})
in the definition (cf. the role of $\zeta$ in Definition~\ref{axioms}). 
The second important point is the following. In both Kaufmann and Krawitz constructions, one predefines the product either by the definition or by the special choice of a two cocycle. 
In our axiomatization, we do not this and hence we are able to study our algebras even for non-weighted 
homogeneous polynomials, such as cusp polynomials, for which the mirror isomorphism of Frobenius manifolds 
can be proven under the assumption of the existence of the Saito theory for the pair $(f,G)$ \cite{BTW2}. 
The last but not least is that our algebra
inherits a natural $\ZZ/2\ZZ$-grading from the Hodge theory associated to $(f,G)$. 
This $\ZZ/2\ZZ$-grading appears only in an abstract way in the definition of Kaufmann and was not considered at all by Krawitz.

Let us comment in detail on the first point. It is well-known that $\Jac(f)$ has a structure of a Frobenius algebra (see \cite{AGV85}).
Namely, by a choice of a nowhere vanishing holomorphic $N$-form, there is 
an isomorphism $\Jac(f) \cong \Omega_f := \Omega^N(\CC^N)/(df \wedge \Omega^{N-1}(\CC^N))$.
It is on $\Omega_f$, where a natural or canonical non-degenerate symmetric bilinear form, called {\em the residue pairing}, exists.
As a result, the above isomorphism equips $\Jac(f)$ with an induced bilinear form.
Therefore, even if the group $G$ is trivial, it is important to consider a pair $(\Jac(f),\Omega_f)$. 
This pair can also be considered as an example of a pair $\left(H\!H^\bullet(\C), H\!H_\bullet(\C)\right)$, consisting of the Hochschild cohomology and the Hochschild homology of a suitable dg- or $\A_\infty$-category, which has a rich algebraic structure.

In this paper, we shall first introduce a $G$-twisted version of the vector space $\Omega_f$, which is denoted by $\Omega'_{f,G}$. 
This is a $\ZZ/2\ZZ$-graded vector space, which also has a $G$-grading, 
with a natural non-degenerate bilinear form called the {\em orbifold residue pairing},
a natural generalization of the residue pairing on $\Omega_f$.
Then, the $G$-twisted version of the Jacobian algebra, denoted by $\Jac'(f,G)$, will be introduced axiomatically 
as a part of the structure of the pair $(\Jac'(f,G), \Omega'_{f,G})$ 
in the way it is in the classical situation when the group $G$ is trivial.
As a result, the algebra $\Jac'(f,G)$ inherits many structures, defined naturally on $\Omega'_{f,G}$, such as 
a $\ZZ/2\ZZ$-grading, a $G$-grading, 
equivariance with respect to automorphisms of the pair $(f,G)$, the orbifold residue pairing and so on.
Our axiomatization of a $G$-twisted Jacobian algebra lists a minimum conditions to be satisfied, 
in particular, we do not prescribe the product structure. 

The expected Jacobian algebra $\Jac(f,G)$ for the pair $(f,G)$, which we shall call {\em the orbifold Jacobian algebra} of $(f,G)$, 
will be given as the $G$-invariant subalgebra of the $G$-twisted version $\Jac'(f,G)$. 
However, it is not clear in general whether such an algebra as $\Jac'(f,G)$ exists or not. 
Even if it exists it may not be unique. 

The main result of this paper is the existence and the uniqueness of the $G$-twisted Jacobian algebra $\Jac'(f,G)$
for an invertible polynomial $f$ with a subgroup $G$ of the maximal abelian symmetry group $G_f$ (Theorem~\ref{theorem_N}). 
Namely, it is uniquely determined up to isomorphism by our axiomatization.
Moreover, we show that if $G$ is a subgroup of $\SL(N;\CC)$ then the orbifold Jacobian algebra $\Jac(f,G)$ 
has a structure of a $\ZZ/2\ZZ$-graded commutative Frobenius algebra.

Another interesting result of ours (Theorem~\ref{thm_ADEiso}) concerns the case when $f$ is an invertible polynomial
giving a singularity of ADE-type and $G$ is a subgroup of $G_f \cap \SL(N;\CC)$. 
We show that in this case our orbifold Jacobian algebra $\Jac(f,G)$ is isomorphic to the usual Jacobian algebra 
This result complies with the results of \cite{et}, where concerning a crepant resolution $\widehat{\CC^3/G}$ of $\CC^3/G$,
it is shown that the geometry of vanishing cycles for a holomorphic map 
$\widehat f:\widehat{\CC^3/G}\longrightarrow \CC$ associated to $f$ is equivalent to the one for the polynomial $\overline f$. 
Therefore, our orbifold Jacobian algebra is not only natural from the view point of algebra but also from the view point 
of geometry. 
\begin{sloppypar}

{\bf Acknowledgements}.\
The first named author is partially supported by the DGF grant He2287/4--1 (SISYPH).
The second named author is supported by JSPS KAKENHI Grant Number JP16H06337, JP26610008.
We are grateful to Wolfgang Ebeling for fruitful discussions.
\end{sloppypar}
\section{Preliminaries}
\begin{definition}\label{def:1}
Let $n$ be a non-negative integer and $f=f(\bx)=f(x_1,\dots, x_n)\in\CC[x_1,\dots, x_n]$ a polynomial.
\begin{enumerate}
\item
The {\em Jacobian algebra} $\Jac(f)$ of $f$ is a $\CC$-algebra defined as 
\begin{equation}
\Jac(f)= \QR{\CC[x_1,\dots, x_n]}{\left(\frac{\partial f}{\partial x_1},\dots,\frac{\partial f}{\partial x_n}\right)}.
\end{equation}
If $\Jac(f)$ is a finite-dimensional $\CC$-algebra, then set $\mu_f:=\dim_\CC\Jac(f)$ and call it the {\em Milnor number} of $f$. 
In particular, if $n=0$ then $\Jac(f)=\CC$ and $\mu_f=1$.
\item
The {\em Hessian} of $f$ is defined as 
\begin{equation}
\hess(f):=\det \left(\frac{\partial^2 f}{\partial x_{i} \partial x_{j}}\right)_{i,j=1,\dots,n}.
\end{equation}
In particular, if $n=0$ then $\hess(f)=1$.
\end{enumerate}
\end{definition}
Throughout this paper, we denote by $N$ a positive integer and by $f=f(\bx)=f(x_1,\dots, x_N)\in\CC[x_1,\dots, x_N]$
a polynomial such that the Jacobian algebra $\Jac(f)$ of $f$
is a finite-dimensional $\CC$-algebra, unless otherwise stated. 
Let $\Omega^p(\CC^N)$ be the $\CC$-module 
of regular $p$-forms on $\CC^N$. 
Consider the $\CC$-module 
\begin{equation}
\Omega_f := \Omega^N(\CC^N)/(df \wedge \Omega^{N-1}(\CC^N)).
\end{equation}
Note that $\Omega_{f}$ is naturally a free $\Jac(f)$-module of rank one, namely, 
by choosing a nowhere vanishing $N$-form $\widetilde{\omega}\in \Omega^N(\CC^N)$ we have the following isomorphism
\begin{equation}\label{eq:isom}
\Jac(f)\stackrel{\cong}{\longrightarrow }\Omega_f,\quad 
[\phi(\bx)]\mapsto [\phi(\bx)]\omega:=[\phi(\bx)\widetilde{\omega}],
\end{equation}
where $\omega:=[\widetilde{\omega}]$ is the residue class of $\widetilde{\omega}$ in $\Omega_f$.
\begin{remark}
Such a class $\omega\in \Omega_f$ giving the isomorphism~\eqref{eq:isom} is a non-zero constant multiple of 
the residue class of $dx_1\wedge \dots\wedge dx_N$.
\end{remark}
\begin{proposition}[cf. Section I.5.11 \cite{AGV85}]
Define a $\CC$-bilinear form $J_{f}:\Omega_{f}\otimes_\CC\Omega_{f}\longrightarrow \CC$ as 
\begin{equation}
J_{f}\left(\omega_1,\omega_2\right):={\rm Res}_{\CC^N}\left[
\begin{gathered}
\phi(\bx) \psi(\bx) dx_1\wedge\dots\wedge dx_N\\
\frac{\partial f}{\partial x_1}\dots \frac{\partial f}{\partial x_N}
\end{gathered}
\right],
\end{equation}
where $\omega_1=[\phi(\bx) dx_1\wedge\dots\wedge dx_N]$ and $\omega_2=[\psi(\bx) dx_1\wedge\dots\wedge dx_N]$.
Then, the bilinear form $J_{f}$ on $\Omega_f$ is non-degenerate. 
Moreover,  for $\phi(\bx)\in\CC[x_1,\dots, x_N]$,
\begin{equation}
J_f([\phi(\bx) dx_1\wedge\dots\wedge dx_N],[\hess(f)dx_1\wedge\dots\wedge dx_N]) \ne 0
\end{equation}
if and only if $\phi({\bf 0})\ne 0$. In particular, we have
\begin{equation}
J_f([dx_1\wedge\dots\wedge dx_N],[\hess(f)dx_1\wedge\dots\wedge dx_N]) =\mu_f.
\end{equation}
\end{proposition}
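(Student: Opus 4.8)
The plan is to transport the whole statement to the Jacobian algebra $A:=\Jac(f)$ by means of the isomorphism~\eqref{eq:isom}, normalising $\omega=[dx_1\wedge\dots\wedge dx_N]$. Writing $g_i:=\partial f/\partial x_i$, the form $J_f$ then becomes the residue pairing
\[
\langle [\phi],[\psi]\rangle
:= {\rm Res}_{\CC^N}\left[
\begin{gathered}
\phi(\bx)\psi(\bx)\,dx_1\wedge\dots\wedge dx_N\\
g_1\cdots g_N
\end{gathered}
\right]
= \ell([\phi\psi]),\qquad \ell([\phi]):=\langle[\phi],[1]\rangle .
\]
Since $\Jac(f)$ is finite dimensional, the $g_i$ cut out a $0$-dimensional scheme, hence form a regular sequence in the Cohen--Macaulay ring $\CC[\bx]$, so $A$ is an Artinian complete intersection. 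As the statement refers to the value at the origin, I take $\mathbf 0$ to be the unique critical point of $f$ (the situation of interest here); in general $A$ is the product of the local Jacobian algebras at the finitely many critical points, the residue pairing is the orthogonal sum of the local ones, and one argues factor by factor. Then $A$ is local Artinian and Gorenstein, so its socle $\mathrm{Soc}(A)=\{a\in A:\mathfrak m\, a=0\}$ is one-dimensional.

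Two classical facts drive the argument. First, for a complete intersection the socle is spanned by the class of the Jacobian determinant of $(g_1,\dots,g_N)$, which here is exactly $[\hess(f)]$ (Scheja--Storch; see also \cite{AGV85}); in particular $[\hess(f)]\neq 0$ and $\mathfrak m\cdot[\hess(f)]=0$. Second, the residue of the Jacobian equals the multiplicity,
\[
\ell([\hess(f)])
= {\rm Res}_{\CC^N}\left[
\begin{gathered}
\hess(f)\,dx_1\wedge\dots\wedge dx_N\\
g_1\cdots g_N
\end{gathered}
\right]
= \dim_\CC \Jac(f) = \mu_f .
\]
I would prove this value by the conservation-of-number argument: perturb to $g_i^{(t)}=g_i-t\,c_i$ with generic constants $c_i$, which leaves $\det(\partial g_i^{(t)}/\partial x_j)=\hess(f)$ unchanged while splitting $\mathbf 0$ into $\mu_f$ simple zeros in a fixed neighbourhood; at each such zero $q$ one has ${\rm Res}_q[\hess(f)\,dx/g^{(t)}]=\hess(f)(q)/\det(\partial g^{(t)}/\partial x)(q)=1$, and continuity of the total residue gives $\mu_f$.

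With these inputs the three assertions follow formally. Because $[\hess(f)]$ lies in the socle and $\phi-\phi(\mathbf 0)\in\mathfrak m$, we get $[\phi]\cdot[\hess(f)]=\phi(\mathbf 0)\,[\hess(f)]$, hence
\[
J_f\bigl([\phi\,dx_1\wedge\dots\wedge dx_N],[\hess(f)\,dx_1\wedge\dots\wedge dx_N]\bigr)
= \phi(\mathbf 0)\,\ell([\hess(f)]) = \phi(\mathbf 0)\,\mu_f,
\]
which vanishes exactly when $\phi(\mathbf 0)=0$, and putting $\phi\equiv 1$ yields the value $\mu_f$. For non-degeneracy I would note that the radical $R:=\{a\in A:\ell(aA)=0\}$ is an ideal; if $R\neq 0$ it would contain a minimal nonzero ideal, which is annihilated by $\mathfrak m$ and hence lies in the one-dimensional socle, forcing $\ell([\hess(f)])=0$, contradicting $\ell([\hess(f)])=\mu_f\neq 0$. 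Thus $R=0$ and $J_f$ is non-degenerate.

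The main obstacle is the pair of structural inputs about the complete-intersection algebra $A$: that $[\hess(f)]$ generates the one-dimensional socle, and that its residue equals $\mu_f$. Everything else is linear algebra on a Gorenstein Artinian ring. One could instead invoke Grothendieck local duality to obtain non-degeneracy directly, but the socle computation is needed in any case for the Hessian statement, so it is cleaner to deduce non-degeneracy from it as above.
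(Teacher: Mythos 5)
The paper offers no proof of this proposition: it is stated as a classical fact with the citation to Section~I.5.11 of \cite{AGV85}, so there is nothing internal to compare against. Your argument is a correct, self-contained reconstruction of exactly the facts that reference supplies: that $\Jac(f)$ is an Artinian complete intersection whose socle is one-dimensional and spanned by $[\hess(f)]$ (Scheja--Storch/Eisenbud--Levine), that $\ell([\hess(f)])=\mu_f$ by the conservation-of-number deformation, and that Gorenstein duality then yields non-degeneracy; the deduction $[\phi]\cdot[\hess(f)]=\phi(\mathbf 0)[\hess(f)]$ from $\phi-\phi(\mathbf 0)\in\mathfrak m$ cleanly gives the ``if and only if'' clause and the value $\mu_f$. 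One point worth emphasizing, which you correctly flag: the clause ``$\ne 0$ iff $\phi(\mathbf 0)\ne 0$'' genuinely requires the origin to be the \emph{only} critical point of $f$, since otherwise the pairing against $[\hess(f)]$ computes a weighted sum $\sum_q \phi(q)\mu_q$ over all critical points and the equivalence fails; this hypothesis is implicit in the paper (and holds for the non-degenerate weighted homogeneous and invertible polynomials that are its actual subject), so your restriction to that case is the right reading rather than a gap. The two standard inputs you invoke (the socle generator and the residue of the Jacobian determinant) are exactly the content of the cited section of \cite{AGV85}, so your route is the intended one, merely written out in full.
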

Under the isomorphism~\eqref{eq:isom}, the residue pairing endows the Jacobian algebra $\Jac(f)$ with a structure of a Frobenius algebra.

\begin{definition}
An associative $\CC$-algebra $(A,\circ)$ is called {\em Frobenius} if there is a non-degenerate bilinear form 
$\eta: A \otimes A \to \CC$ such that $\eta\left( X \circ Y, Z\right) = \eta\left(X ,Y\circ Z\right)$ for $X,Y,Z\in A$.
\end{definition}

\begin{definition}
The {\em group of maximal diagonal symmetries} of $f$ is defined as 
\begin{equation}
G_f := \left\{ (\lambda_1, \ldots , \lambda_N )\in(\CC^\ast)^N \, \left| \, f(\lambda_1x_1, \ldots, \lambda_N x_N) = 
f(x_1, \ldots, x_N) \right\} \right..
\end{equation}
We shall always identify $G_f$ with the subgroup of diagonal matrices of ${\rm GL}(N;\CC)$.
Set 
\begin{equation}
G_f^{\rm SL} := G_f\cap {\rm SL}(N;\CC).
\end{equation}

\end{definition}
\begin{remark}
For a finite subgroup $G \subseteq G_f$, the pair $(f,G)$ is often called a {\em Landau-Ginzburg orbifold}. 
\end{remark}
From now on, we shall denote by $G$ a finite subgroup of $G_f$ unless otherwise stated.
In what follows, define also $\epi\left[\alpha\right] := \exp(2 \pi \sqrt{-1} \alpha)$.
The group $G$ acts naturally on $\CC^N$ and each element $g\in G$ has a unique expression of the form
\begin{equation}\label{G-notation}
g={\rm diag}\left(\epi\left[\frac{a_1}{r}\right], \dots,\epi\left[\frac{a_N}{r}\right]\right)
\quad \mbox{with } 0 \leq a_i < r,
\end{equation}
where $r$ is the order of $g$. We use the notation $(a_1/r,\dots , a_N/r)$ or
$\frac{1}{r}(a_1,\dots , a_N)$ for the element $g$.
The {\em age} of $g$, which is introduced in \cite{IR}, is defined as the rational number 
\begin{equation}
{\rm age}(g) := \frac{1}{r}\sum_{i=1}^N a_i. 
\end{equation}
Note that if $g\in G_f^\SL$ then ${\rm age}(g)\in\ZZ$.
For each $g\in G$, we denote by $\Fix(g):=\{\bx\in\CC^N~|~g\cdot \bx=\bx \}$ the fixed locus of $g$, 
by $N_g: = \dim_\CC \Fix(g)$ its dimension and by $f^g:=f|_{\Fix(g)}$ the restriction of $f$ to the fixed locus of $g$. 
Note that since $G$ acts diagonally on $\CC^N$, $\Fix(g)$ is a linear subspace of $\CC^N$. 
\begin{proposition}[cf. Proposition~5 in \cite{et13}]
For each $g\in G$, we have a natural surjective $\CC$-algebra homomorphism $\Jac(f)\longrightarrow \Jac(f^g)$.
In particular, the Jacobian algebra $\Jac(f^g)$ is also finite dimensional.
\end{proposition}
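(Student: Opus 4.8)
The plan is to realize the desired homomorphism as the map induced on Jacobian algebras by restriction of functions to the fixed locus $\Fix(g)$. After reordering the coordinates I would assume $g=\mathrm{diag}\left(\epi\left[\frac{a_1}{r}\right],\dots,\epi\left[\frac{a_N}{r}\right]\right)$ has $a_1=\dots=a_{N_g}=0$ and $a_j\ne 0$ for $j>N_g$, so that $\Fix(g)=\{\bx\in\CC^N\mid x_{N_g+1}=\dots=x_N=0\}$ and $f^g=f(x_1,\dots,x_{N_g},0,\dots,0)\in\CC[x_1,\dots,x_{N_g}]$. Restriction of functions then defines a surjective $\CC$-algebra homomorphism $\rho:\CC[x_1,\dots,x_N]\to\CC[x_1,\dots,x_{N_g}]$ sending $x_j\mapsto 0$ for $j>N_g$. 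I claim $\rho$ descends to the Jacobian algebras, i.e. that it maps the ideal $\left(\partial f/\partial x_1,\dots,\partial f/\partial x_N\right)$ into $\left(\partial f^g/\partial x_1,\dots,\partial f^g/\partial x_{N_g}\right)$; granting this, the induced map $\Jac(f)\to\Jac(f^g)$ is the homomorphism we seek.

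To verify the claim I would split the generators $\partial f/\partial x_i$ into two groups. For $i\le N_g$, differentiation in $x_i$ commutes with the substitution $x_{N_g+1}=\dots=x_N=0$, so $\rho(\partial f/\partial x_i)=\partial f^g/\partial x_i$, which lies in the target ideal by definition. The substantive point is the remaining generators. Differentiating the $g$-invariance identity $f(\epi\left[\frac{a_1}{r}\right]x_1,\dots,\epi\left[\frac{a_N}{r}\right]x_N)=f(\bx)$ in $x_j$ shows that $\partial f/\partial x_j$ is a relative invariant, namely $(\partial f/\partial x_j)(g\cdot\bx)=\epi\left[-\frac{a_j}{r}\right](\partial f/\partial x_j)(\bx)$ as a polynomial identity. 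Restricting to $\Fix(g)$, where $g$ acts as the identity, gives $(\partial f/\partial x_j)|_{\Fix(g)}=\epi\left[-\frac{a_j}{r}\right](\partial f/\partial x_j)|_{\Fix(g)}$; since $\epi\left[-\frac{a_j}{r}\right]\ne 1$ for $j>N_g$, this forces $\rho(\partial f/\partial x_j)=(\partial f/\partial x_j)|_{\Fix(g)}=0$. Hence every generator of the source ideal maps into the target ideal and $\rho$ descends as claimed.

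Finally, surjectivity of $\Jac(f)\to\Jac(f^g)$ follows from surjectivity of $\rho$ together with that of the quotient map $\CC[x_1,\dots,x_{N_g}]\to\Jac(f^g)$, and the finite-dimensionality of $\Jac(f^g)$ is then immediate, as it is a quotient of the finite-dimensional algebra $\Jac(f)$. The only place where the hypotheses do real work, and the step I expect to be the crux, is the vanishing $(\partial f/\partial x_j)|_{\Fix(g)}=0$ in the non-fixed directions; everything else is the formal bookkeeping of restriction maps and quotients. I would take care that the relative-invariance computation is read as a genuine identity of polynomials on all of $\CC^N$ (not merely on $\Fix(g)$) before specializing, and note that no hypothesis on $g$ beyond $g\in G_f$ is required.
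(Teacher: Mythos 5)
Your proof is correct and follows essentially the same route as the paper: reduce to the coordinate description of $\Fix(g)$ and show that the Jacobian ideal of $f$ maps into that of $f^g$ under restriction, the crux being that $\partial f/\partial x_j$ vanishes on $\Fix(g)$ for the non-fixed directions $j>N_g$. Your semi-invariance computation $(\partial f/\partial x_j)(g\cdot\bx)=\epi\left[-\tfrac{a_j}{r}\right](\partial f/\partial x_j)(\bx)$ is precisely the justification the paper leaves implicit for its inclusion $\left(\partial f/\partial x_{N_g+1},\dots,\partial f/\partial x_N\right)\subset\left(x_{N_g+1},\dots,x_N\right)$, so no gap remains.
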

\begin{proof}
We may assume that $\Fix(g)=\{\bx\in\CC^N~|~x_{N_g+1}=\dots =x_{N}=0\}$ by a suitable renumbering of indices.
Since $f$ is invariant under $G$, $g\cdot x_i\ne x_i$ for $i=N_g+1,\dots, N$ and 
$\frac{\partial f}{\partial x_{N_g+1}},\dots, \frac{\partial f}{\partial x_N}$ form a regular sequence, we have
\[
\left(\frac{\partial f}{\partial x_{N_g+1}},\dots, \frac{\partial f}{\partial x_N}\right)\subset 
\left(x_{N_g+1},\dots, x_N\right).
\]
Therefore, we have a natural surjective $\CC$-algebra homomorphism
\begin{eqnarray*}
\Jac(f)&=&\QR{\CC[x_1,\dots, x_N]}{\left(\frac{\partial f}{\partial x_1},\dots,\frac{\partial f}{\partial x_N}\right)}\\
&\longrightarrow &
\QR{\CC[x_1,\dots, x_N]}{\left(\frac{\partial f}{\partial x_1},\dots,\frac{\partial f}{\partial x_{N_g}}, x_{N_g+1},\dots, x_N\right)}\\
&=&\QR{\CC[x_1,\dots, x_{N_g}]}{\left(\frac{\partial f^g}{\partial x_1},\dots,\frac{\partial f^g}{\partial x_{N_g}}\right)}=\Jac(f^g).
\end{eqnarray*}
\end{proof}
\begin{corollary}\label{corollary: Jac module on omega fg}
For each $g\in G$, $\Omega_{f^g}$ is naturally equipped with a structure of $\Jac(f)$-module.
\end{corollary}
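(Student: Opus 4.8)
The plan is to realize the desired module structure by \emph{restriction of scalars} along the algebra homomorphism produced by the preceding Proposition, so that essentially all of the content is inherited from that statement.

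First I would observe that $\Omega_{f^g}$ carries a canonical structure of $\Jac(f^g)$-module. After renumbering the coordinates so that $\Fix(g)=\{\bx\in\CC^N \mid x_{N_g+1}=\dots=x_N=0\}$, the restriction $f^g$ is a polynomial in $x_1,\dots,x_{N_g}$ whose Jacobian algebra $\Jac(f^g)$ is finite-dimensional by the preceding Proposition. Hence the construction of \eqref{eq:isom}, applied to $f^g$ in $N_g$ variables, exhibits $\Omega_{f^g}$ as a free $\Jac(f^g)$-module of rank one; concretely the action $[\psi]\cdot[\phi\,dx_1\wedge\dots\wedge dx_{N_g}]:=[\psi\phi\,dx_1\wedge\dots\wedge dx_{N_g}]$ is well-defined, since multiplication by a function preserves the submodule $df^g\wedge\Omega^{N_g-1}(\Fix(g))$.

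Second I would invoke the preceding Proposition to obtain the natural surjective $\CC$-algebra homomorphism $\pi\colon\Jac(f)\longrightarrow\Jac(f^g)$. Restriction of scalars along $\pi$ then endows any $\Jac(f^g)$-module, and in particular $\Omega_{f^g}$, with a $\Jac(f)$-module structure: for $[\phi(\bx)]\in\Jac(f)$ and $\eta\in\Omega_{f^g}$ one sets $[\phi(\bx)]\cdot\eta:=\pi([\phi(\bx)])\cdot\eta$. That this is a genuine action — additivity, compatibility with the unit, and associativity $[\phi][\psi]\cdot\eta=[\phi]\cdot([\psi]\cdot\eta)$ — follows formally from $\pi$ being a homomorphism of unital $\CC$-algebras.

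There is no real obstacle here: the substance of the corollary is carried entirely by the preceding Proposition, and what remains is the formal remark that a module over the target of a ring homomorphism pulls back to a module over the source. The only point deserving comment is naturality. Since both the $\Jac(f^g)$-module structure on $\Omega_{f^g}$ and the homomorphism $\pi$ are canonical — independent of any auxiliary data beyond the fixed nowhere-vanishing top form used to trivialize the modules, and in particular independent of the chosen renumbering, as $\Omega_{f^g}$ is intrinsically attached to the linear subspace $\Fix(g)$ — the resulting $\Jac(f)$-action is itself canonical, which is precisely what ``naturally equipped'' asserts.
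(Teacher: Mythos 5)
Your proposal is correct and follows exactly the paper's argument: $\Omega_{f^g}$ is a free $\Jac(f^g)$-module of rank one by the analogue of \eqref{eq:isom}, and restriction of scalars along the surjection $\Jac(f)\to\Jac(f^g)$ from the preceding Proposition gives the $\Jac(f)$-module structure. The additional remarks on well-definedness and naturality are fine but not needed beyond what the paper records.
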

\begin{proof}
Since $\Omega_{f^g}$ is a free $\Jac(f^g)$-module of rank one  (cf. \eqref{eq:isom}),
the surjective $\CC$-algebra homomorphism $\Jac(f)\longrightarrow \Jac(f^g)$ yields the statement. 
\end{proof}
\section{Orbifold Jacobian algebras}
In order to introduce an orbifold Jacobian algebra of the pair $(f,G)$, 
we first define axiomatically a $G$-twisted Jacobian algebra of $f$. 
\subsection{Setup}
\begin{definition}\label{def_Omega'}
Define a $\ZZ/2\ZZ$-graded $\CC$-module $\Omega'_{f,G}=\left(\Omega'_{f,G}\right)_{\overline{0}}\oplus \left(\Omega'_{f,G}\right)_{\overline{1}}$, $\overline i\in\ZZ/2\ZZ$, by 
\begin{subequations}
\begin{equation}
\left(\Omega'_{f,G}\right)_{\overline{0}}:=\bigoplus_{\substack{g\in G\\ N-N_g\equiv 0\ (\text{\rm mod } 2)}}\Omega'_{f,g},\quad 
\left(\Omega'_{f,G}\right)_{\overline{1}}:=\bigoplus_{\substack{g\in G\\ N-N_g\equiv 1\ (\text{\rm mod } 2)}}\Omega'_{f,g},
\end{equation}
\begin{equation}
\Omega'_{f,g}:=\Omega_{f^g}.
\end{equation}
Here, for each $g\in G$ with $\Fix(g)=\{{\bf 0}\}$, 
\begin{equation}
\Omega_{f^g}=\Omega^0(\{{\bf 0}\})/(df^g \wedge \Omega^{-1}(\{{\bf 0}\}))=\Omega^0(\{{\bf 0}\})
\end{equation}
\end{subequations}
is the $\CC$-module of rank one consisting of constant functions on $\{{\bf 0}\}$.
\end{definition}
Since the group $G$ acts on each $\Omega_{f^g}$ by the pull-back of forms via its action on $\Fix(g)$, we can define 
the following $\ZZ/2\ZZ$-graded $\CC$-module.
\begin{definition}\label{def_Omega}
Define a $\ZZ/2\ZZ$-graded $\CC$-module $\Omega_{f,G}$ as the $G$-invariant part of $\Omega'_{f,G}$,
\begin{equation}
\Omega_{f,G}=\left(\Omega'_{f,G}\right)^G.
\end{equation}
That is, $\Omega_{f,G}=\left(\Omega_{f,G}\right)_{\overline{0}}\oplus \left(\Omega_{f,G}\right)_{\overline{1}}$, 
$\overline i\in\ZZ/2\ZZ$, is given by 
\begin{subequations}
\begin{equation}\label{equation: omega polarization}
\left(\Omega_{f,G}\right)_{\overline{0}}:=\bigoplus_{\substack{g\in G\\ N-N_g\equiv 0\ (\text{\rm mod } 2)}}\Omega_{f,g},
\quad
\left(\Omega_{f,G}\right)_{\overline{1}}:=\bigoplus_{\substack{g\in G\\ N-N_g\equiv 1\ (\text{\rm mod } 2)}}\Omega_{f,g},
\end{equation}
\begin{equation}
\Omega_{f,g}:=\left(\Omega_{f^g}\right)^G.
\end{equation}
\end{subequations}
\end{definition}
\begin{definition}\label{def_JfG}
Define a non-degenerate $\CC$-bilinear form 
$J_{f,G}:\Omega'_{f,G}\otimes_{\CC} \Omega'_{f,G}\longrightarrow \CC$, called the {\em orbifold residue pairing}, by
\begin{subequations}
\begin{equation}\label{equation: JfG}
J_{f,G}:=\bigoplus_{g\in G}J_{f,g},
\end{equation}
where $J_{f,g}$ is a perfect $\CC$-bilinear form 
$J_{f,g}:\Omega'_{f,g}\otimes_{\CC} \Omega'_{f,{g^{-1}}}\longrightarrow \CC$ defined by 
\begin{equation}
J_{f,g}\left(\omega_1,\omega_2\right):=(-1)^{N-N_g}\cdot \epi\left[-\frac{1}{2}\age(g)\right]\cdot |G|\cdot {\rm Res}_{\Fix(g)}\left[
\begin{gathered}
\phi \psi dx_{i_1}\wedge\dots\wedge dx_{i_{N_g}}\\
\frac{\partial f^g}{\partial x_{i_1}}\dots\frac{\partial f^g}{\partial x_{i_{N_g}}}
\end{gathered}
\right]
\end{equation}
for $\omega_1=[\phi dx_{i_1}\wedge\dots\wedge dx_{i_{N_g}}]\in \Omega'_{f,g}$ and 
$\omega_2=[\psi dx_{i_1}\wedge\dots\wedge dx_{i_{N_g}}]\in \Omega'_{f,g^{-1}}$, 
where $x_{i_1},\dots, x_{i_{N_g}}$ are coordinates of $\Fix(g)$.
For each $g\in G$ with $\Fix(g)=\{{\bf 0}\}$, we define 
\begin{equation}
J_{f,g}\left(1_g,1_{g^{-1}}\right):=(-1)^{N}\cdot \epi\left[-\frac{1}{2}\age(g)\right]\cdot |G|,
\end{equation}
where $1_g\in\Omega'_{f,g}$ and $1_{g^{-1}}\in\Omega'_{f,g^{-1}}$ denote the constant functions on $\{{\bf 0}\}$ 
whose values are $1$. 
\end{subequations}
\end{definition}
\begin{proposition}\label{prop:G-sym J}
The $\CC$-bilinear form $J_{f,G}$ is $G$-twisted $\ZZ/2\ZZ$-graded symmetric in the sense that 
\begin{equation}
J_{f,G}(\omega_1,\omega_2)=(-1)^{N-N_g}\cdot \epi\left[-\age(g)\right]\cdot J_{f,G}(\omega_2,\omega_1)
\end{equation}
for $\omega_1\in \Omega'_{f,g}$ and $\omega_2\in \Omega'_{f,g^{-1}}$.
\end{proposition}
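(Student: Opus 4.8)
The plan is to reduce the statement to a computation on each graded summand and then exploit commutativity of the residue together with an elementary identity relating $\age(g)$ and $\age(g^{-1})$. Since $J_{f,G}=\bigoplus_{g\in G}J_{f,g}$, for $\omega_1\in\Omega'_{f,g}$ and $\omega_2\in\Omega'_{f,g^{-1}}$ we have $J_{f,G}(\omega_1,\omega_2)=J_{f,g}(\omega_1,\omega_2)$, whereas $J_{f,G}(\omega_2,\omega_1)=J_{f,g^{-1}}(\omega_2,\omega_1)$, because $\omega_2$ lives in the $g^{-1}$-summand and $(g^{-1})^{-1}=g$. Thus it suffices to compare the two scalars $J_{f,g}(\omega_1,\omega_2)$ and $J_{f,g^{-1}}(\omega_2,\omega_1)$.

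First I would record the geometric coincidences between $g$ and $g^{-1}$: they share the same fixed locus $\Fix(g)=\Fix(g^{-1})$ (hence $N_g=N_{g^{-1}}$), and consequently $f^g=f^{g^{-1}}$ as polynomials on that common linear subspace. Writing $\omega_1=[\phi\,dx_{i_1}\wedge\dots\wedge dx_{i_{N_g}}]$ and $\omega_2=[\psi\,dx_{i_1}\wedge\dots\wedge dx_{i_{N_g}}]$ in the same coordinates on $\Fix(g)$, the residue entering $J_{f,g}(\omega_1,\omega_2)$ and the one entering $J_{f,g^{-1}}(\omega_2,\omega_1)$ are taken over the same locus with the same denominator, and their numerators $\phi\psi$ and $\psi\phi$ agree by commutativity of $\CC[x_1,\dots,x_N]$. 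Hence the two residues are literally equal, the prefactors $(-1)^{N-N_g}$ and $|G|$ coincide for both, and the only discrepancy is the ratio $\epi[-\tfrac12\age(g)]/\epi[-\tfrac12\age(g^{-1})]$. The degenerate case $\Fix(g)=\{{\bf 0}\}$ is handled identically using the defining value of $J_{f,g}(1_g,1_{g^{-1}})$ with $N_g=0$.

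The one genuinely nontrivial input is the age identity $\age(g)+\age(g^{-1})=N-N_g$, which I would prove directly from the normal form \eqref{G-notation}: if $g=\tfrac1r(a_1,\dots,a_N)$ with $0\le a_i<r$, then each nonzero $a_i$ contributes $a_i$ to $r\cdot\age(g)$ and $r-a_i$ to $r\cdot\age(g^{-1})$, while each vanishing $a_i$ contributes $0$ to both; summing gives $\age(g)+\age(g^{-1})=\tfrac1r\cdot r\cdot\#\{i:a_i\neq 0\}=N-N_g$. Substituting $\age(g^{-1})=N-N_g-\age(g)$ into the ratio yields
\[
\frac{J_{f,g}(\omega_1,\omega_2)}{J_{f,g^{-1}}(\omega_2,\omega_1)}
=\epi\Big[-\tfrac12\age(g)+\tfrac12\age(g^{-1})\Big]
=\epi\big[-\age(g)\big]\cdot\epi\big[\tfrac12(N-N_g)\big].
\]

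Finally I would observe that $\epi[\tfrac12(N-N_g)]=\exp(\pi\sqrt{-1}\,(N-N_g))=(-1)^{N-N_g}$, which turns the last display into exactly the claimed relation $J_{f,G}(\omega_1,\omega_2)=(-1)^{N-N_g}\epi[-\age(g)]\,J_{f,G}(\omega_2,\omega_1)$. I do not expect any serious obstacle; the argument is essentially bookkeeping, and the only points requiring care are keeping the labels $g$ versus $g^{-1}$ straight in the two prefactors and verifying the age identity that converts the half-age exponentials into the $\ZZ/2\ZZ$-grading sign.
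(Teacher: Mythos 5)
Your proposal is correct and follows essentially the same route as the paper's proof: reduce to comparing $J_{f,g}(\omega_1,\omega_2)$ with $J_{f,g^{-1}}(\omega_2,\omega_1)$, observe that the residues and the factors $(-1)^{N-N_g}$ and $|G|$ coincide, and convert the ratio of half-age exponentials via $\age(g)+\age(g^{-1})=N-N_g$ into the sign $(-1)^{N-N_g}\cdot\epi[-\age(g)]$. The only difference is that you spell out the proof of the age identity and the degenerate case $\Fix(g)=\{{\bf 0}\}$, which the paper leaves implicit.
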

\begin{proof}
Let the notations be as in Definition~\ref{def_JfG}. Since $\Fix(g)=\Fix(g^{-1})$, $f^g=f^{g^{-1}}$ and $\age(g)+\age(g^{-1})=N-N_g$,
we have 
\begin{align*}
J_{f,G}(\omega_1,\omega_2)&=J_{f,g}\left(\omega_1,\omega_2\right)\\
&=\epi\left[-\frac{1}{2}\age(g)+\frac{1}{2}\age(g^{-1})\right]\cdot J_{f,g^{-1}}\left(\omega_2,\omega_1\right)\\
&=(-1)^{N-N_g}\cdot \epi\left[-\age(g)\right]\cdot J_{f,G}\left(\omega_2,\omega_1\right).
\end{align*}
\end{proof}

For a $\CC$-algebra $R$, denote by $\Aut_{\CC\text{-}{\rm alg}}(R)$ the group of all $\CC$-algebra automorphisms of $R$. 
Note that $G$ is naturally identified with a subgroup of $\Aut_{\CC\text{-}{\rm alg}}(\CC[x_1,\dots, x_N])$.
\begin{definition}\label{def_Aut}
Define the group $\Aut(f,G)$ of automorphisms of $(f,G)$ as 
\[
\Aut(f,G):=\{\varphi\in \Aut_{\CC\text{-}{\rm alg}}(\CC[x_1,\dots, x_N])~|~\varphi(f)=f,\ \varphi\circ g\circ \varphi^{-1}\in G 
\text{ for all }g\in G\}.
\]
\end{definition}
It is obvious that $G$ is naturally identified with a subgroup of $\Aut(f,G)$.
Note that a $\CC$-algebra automorphism $\varphi\in \Aut_{\CC\text{-}{\rm alg}}(\CC[x_1,\dots, x_N])$ is $G$-equivariant if and only if 
$\varphi\circ g\circ \varphi^{-1}=g$ for all $g\in G$.
\begin{remark}\label{rem:skew}
Let $\CC[x_1,\dots, x_N]*G$ be the skew group ring which is a $\CC$-vector space $\CC[x_1,\dots, x_N]\otimes_\CC \CC[G]$ 
with a product defined as $(\phi_1\otimes g_1)(\phi_2\otimes g_2)=(\phi_1g_1(\phi_2))\otimes g_1g_2$ for any $\phi_1,\phi_2\in \CC[x_1,\dots, x_N]$ and $g_1,g_2\in G$.
Then the group $\Aut(f,G)$ can be regarded as the subgroup of all $\varphi'\in \Aut_{\CC\text{-}{\rm alg}}(\CC[x_1,\dots, x_N]*G)$ 
such that $\varphi'(f\otimes\id)=f\otimes\id$. 
For $\varphi\in\Aut(f,G)$, the correspondence element in $\Aut_{\CC\text{-}{\rm alg}}(\CC[x_1,\dots, x_N]*G)$ 
is given by $\phi\otimes g\mapsto \varphi(\phi)\otimes (\varphi\circ g\circ \varphi^{-1})$.
\end{remark}
An element $\varphi\in\Aut(f,G)$ regarded as a bi-regular map on $\CC^N$ maps 
$\Fix(\varphi\circ g\circ \varphi^{-1})$ to $\Fix(g)$ for each $g\in G$.
Hence, the group $\Aut(f,G)$ acts naturally on $\Omega'_{f,G}$ by 
\begin{equation}\label{conj-action}
\Omega'_{f,g}\longrightarrow \Omega'_{f,\varphi\circ g\circ \varphi^{-1}},\quad \omega\mapsto \varphi^*|_{\Fix(g)}\omega,
\end{equation}
where $\varphi^*|_{\Fix(g)}$ denotes the restriction of the pullback $\varphi^*$ of differential forms to $\Fix(g)$.
In order to simplify the notation, for each $\varphi\in\Aut(f,G)$, we shall denote by $\varphi^\ast$ the action of $\varphi$ on $\Omega'_{f,G}$. 
It also follows that $\Aut(f,G)$ acts naturally on $\Omega_{f,G}$.
\subsection{Axioms}
\begin{definition}\label{axioms}
A {\em $G$-twisted Jacobian algebra} of $f$ is a $\ZZ/2\ZZ$-graded $\CC$-algebra $\Jac'(f, G)=\Jac'(f,G)_{\overline{0}}\oplus \Jac'(f,G)_{\overline{1}}$, $\overline i\in\ZZ/2\ZZ$, satisfying the following axioms:
\begin{enumerate}
\item \label{axiom_vs}
For each $g\in G$, there is a $\CC$-module $\Jac'(f,g)$ isomorphic to $\Omega'_{f,g}$ as a $\CC$-module 
satisfying the following conditions: 
\begin{enumerate}
\item
For the identity $\id$ of $G$,
\begin{equation}\label{axiom_Jacid}
\Jac'(f,\id)=\Jac(f).
\end{equation}
\item We have 
\begin{subequations}
\begin{equation}
\Jac'(f,G)_{\overline{0}}=\bigoplus_{\substack{g\in G\\ N-N_g\equiv 0\ (\text{\rm mod } 2)}}\Jac'(f,g),
\end{equation}
\begin{equation}
\Jac'(f,G)_{\overline{1}}=\bigoplus_{\substack{g\in G\\ N-N_g\equiv 1\ (\text{\rm mod } 2)}}\Jac'(f,g).
\end{equation}
\end{subequations}
\end{enumerate}
\item \label{axiom_algebra}
The $\ZZ/2\ZZ$-graded $\CC$-algebra structure $\circ$ 
on $\Jac'(f,G)$ satisfies
\begin{equation}\label{equation: multiplication g,h to gh}
\Jac'(f,g) \circ \Jac'(f,h)\subset \Jac'(f,gh),\quad g,h\in G,
\end{equation}
and the $\CC$-subalgebra $\Jac'(f,\id)$ of $\Jac'(f,G)$ coincides with the $\CC$-algebra $\Jac(f)$.
\item \label{axiom_=JacOmega}
The $\ZZ/2\ZZ$-graded $\CC$-algebra $\Jac'(f,G)$ is such that
the $\CC$-module $\Omega'_{f,G}$ has a structure of a $\Jac'(f,G)$-module
\begin{equation}
\vdash: \Jac'(f,G)\otimes\Omega'_{f,G}\longrightarrow \Omega'_{f,G},\quad X\otimes \omega\mapsto X\vdash \omega,
\end{equation}
satisfying the following conditions:
\begin{enumerate}
\item \label{axiom_idmodule}
For any $g,h\in G$ we have
\begin{equation}
\Jac'(f,g)\vdash \Omega'_{f,h}\subset \Omega'_{f,gh},
\end{equation}
and the $\Jac'(f,\id)$--module structure on $\Omega'_{f,g}$ coincides with 
the $\Jac(f)$-module structure on $\Omega_{f^g}$ given by Corollary~\ref{corollary: Jac module on omega fg}.
\item \label{axiom_isom}
By choosing a nowhere vanishing $N$-form, we have the following isomorphism
\begin{equation}\label{eq:isom2}
\Jac'(f,G)\stackrel{\cong}{\longrightarrow }\Omega'_{f,G},\quad X\mapsto X\vdash\zeta,
\end{equation}
where $\zeta$ is the residue class in $\Omega'_{f,\id}=\Omega_f$ of the $N$-form.
Namely, $\Omega'_{f,G}$ is a free $\Jac'(f,G)$-module of rank one.
\end{enumerate}
\item 
There is an induce action of $\Aut(f,G)$ on $\Jac'(f,G)$ given by 
\begin{equation}
\varphi^\ast(X) \vdash \varphi^\ast(\zeta) := \varphi^\ast(X \vdash \zeta),\quad \varphi \in \Aut(f,G),\ X\in \Jac'(f,G),
\end{equation}
where $\zeta$ is an element in $\Omega'_{f,\id}$ giving the isomorphism in axiom~\eqref{axiom_isom}.
The algebra structure of $\Jac'(f,G)$ satisfies the following conditions: 
\begin{enumerate}
\item\label{axiom_ringauto}
It is $\Aut(f,G)$-invariant, namely, 
\begin{equation}
\varphi^\ast(X)\circ \varphi^\ast(Y)=\varphi^\ast (X\circ Y),\quad\varphi \in \Aut(f,G),\ X,Y\in \Jac'(f,G).
\end{equation}
\item\label{axiom: G-twisted comm}
It is $G$-twisted $\ZZ/2\ZZ$-graded commutative, namely, for any $g,h\in G$ and $X\in \Jac'(f,g)$, $Y\in\Jac'(f,h)$, we have  
\begin{equation}
X\circ Y=(-1)^{\overline{X}\cdot\overline{Y}} g^\ast (Y)\circ X,
\end{equation}
where $\overline{X}=N-N_g$ and $\overline{Y}=N-N_h$ are the $\ZZ/2\ZZ$-grading of $X$ and $Y$, 
and $g^\ast$ is the induced action of $g$ considered as an element of $\Aut(f,G)$.
\end{enumerate}
\item \label{axiom_J_fG}
For any $g,h\in G$ and $X\in \Jac'(f,g)$, $\omega\in\Omega'_{f,h}$, $\omega'\in\Omega'_{f,G}$, we have      
\begin{equation}
J_{f,G}(X\vdash\omega,\omega')=(-1)^{\overline{X}\cdot \overline{\omega}} 
J_{f,G}\left(\omega,((h^{-1})^\ast X)\vdash\omega'\right),
\end{equation}
where $\overline{X}=N-N_g$ and $\overline{\omega}=N-N_h$ are the $\ZZ/2\ZZ$-grading of $X$ and $\omega$, 
and $(h^{-1})^\ast$ is the induced action of $h^{-1}$ considered as an element of $\Aut(f,G)$.
\item\label{axiom: G-H} 
Let $G'$ be a finite subgroup of $G_f$ such that $G \subseteq G'$.
Fix a nowhere vanishing $N$-form and denote by $\zeta$ its residue class in $\Omega'_{f,\id}$. 
By axiom~\eqref{axiom_isom} for $G,G'$, fix the isomorphisms given by $\zeta$;
\begin{eqnarray}
& &\Jac'(f,G)\stackrel{\cong}{\longrightarrow }\Omega'_{f,G},\quad X\mapsto X\vdash\zeta,\\
& &\Jac'(f,G')\stackrel{\cong}{\longrightarrow }\Omega'_{f,G'},\quad X'\mapsto X'\vdash\zeta.
\end{eqnarray}
Then, the injective map $\Omega'_{f,G}\longrightarrow \Omega'_{f,G'}$ induced by
the identity maps $\Omega'_{f,g}\longrightarrow \Omega'_{f,g}$, $g\in G$ 
yields an injective map of the $\ZZ/2\ZZ$-graded $\CC$-modules $\Jac'(f,G) \to \Jac'(f,G')$, which is an algebra-homomorphism.
\end{enumerate}
\end{definition}
\subsection{Comments on the axioms}
Such a class $\zeta\in\Omega'_{f,\id}$ giving the isomorphism in axiom~\eqref{axiom_isom}
is a non-zero constant multiple of the residue class of $dx_1\wedge \dots\wedge dx_N$.
It follows that the $\Aut(f,G)$-action on $\Jac'(f,G)$ does not depend on the choice of $\zeta$. 
In particular, the $\Aut(f,G)$-action on $\Jac'(f,\id)=\Jac(f)$ is nothing but the usual one which is induced by
the natural $\Aut(f,G)$-action on $\CC[x_1,\dots, x_N]$. 
For different choices of $\zeta$ we get isomorphic algebras.
Axioms~\eqref{axiom_ringauto}, \eqref{axiom: G-twisted comm} and \eqref{axiom_J_fG} are  
naturally expected by keeping the skew group ring $\CC[x_1,\dots, x_N]*G$ in mind (see also Remark~\ref{rem:skew}).
Indeed, our axioms are motivated by some intuitive properties of the ``Jacobian algebra of $f$ in the non-commutative 
ring $\CC[x_1,\dots, x_N]*G$". 
Axiom~\eqref{axiom: G-twisted comm} can also be found in \cite{K1}, while the others seem to be new.
We have not used the commutativity of $G$ in the axioms in Definition~\ref{axioms} except for the last one~\eqref{axiom: G-H}.
Instead of $G_f$ there, by the use of the largest subgroup $G_{f,nc}$ of ${\rm GL}(N;\CC)$ respecting $f$ 
whose restriction $f^g$ to $\Fix(g)$ gives a finite dimensional Jacobian algebra $\Jac(f^g)$ for all $g\in G_{f,nc}$, 
the definition can naturally be extended to the non-abelian case, namely, the case when $G$ is a subgroup of $G_{f,nc}$.  
\subsection{Conjecture and the definition}
We shall denote by $v_\id$ the residue class of $1\in\CC[x_1,\dots, x_N]$ in $\Jac'(f,\id)=\Jac(f)$, which is the unit with respect to 
the product structure $\circ$ since by axiom~\eqref{axiom_J_fG} we have
\begin{equation}
J_{f,G}((X\circ v_\id)\vdash\zeta,\omega)=J_{f,G}(X\vdash(v_\id\vdash\zeta),\omega)=J_{f,G}(X\vdash \zeta,\omega)
\end{equation}
for all $X\in \Jac'(f,G)$, $\omega\in\Omega'_{f,G}$ and $\zeta\in \Omega_{f,\id}$ giving the isomorphism~\eqref{eq:isom2}.
Note also that $\varphi^\ast(v_\id)=v_\id$ for all $\varphi\in\Aut(f,G)$ since 
$\varphi^\ast(v_\id) \vdash \varphi^\ast(\zeta) = \varphi^\ast(v_\id \vdash \zeta)=\varphi^\ast(\zeta)=v_\id\vdash\varphi^\ast(\zeta)$. 
In particular, $v_\id$ is $G$-invariant.
By the isomorphism~\eqref{eq:isom2}, it follows from \eqref{conj-action} that 
\begin{equation}
\varphi^\ast(\Jac'(f,g))=\Jac'(f,\varphi\circ g \circ \varphi^{-1}),\quad \varphi\in\Aut(f,G). 
\end{equation}
In particular, $g^\ast(\Jac'(f,h)) = \Jac'(f,ghg^{-1})$ for $g, h\in G$.
Now, $G$ is a commutative group, we have $g^\ast(\Jac'(f,h)) =\Jac'(f,h)$. 
Since the product structure $\circ$ is also $G$-invariant by axiom~\eqref{axiom_ringauto}
it follows that the $G$-invariant subspace of ${\rm Jac}'(f,G)$ 
has a structure of a $\ZZ/2\ZZ$-graded commutative algebra, which is 
$\ZZ/2\ZZ$-graded commutative due to axiom~\eqref{axiom: G-twisted comm}.
A priori there might not be a unique $\ZZ/2\ZZ$-graded $\CC$-algebra satisfying the axioms in Definition~\ref{axioms},
nevertheless we expect the following
\begin{conjecture}\label{existence&uniqueness}
Let the notations be as above.
\begin{enumerate}
\item[(a)]
A $G$-twisted Jacobian algebra ${\rm Jac}'(f,G)$ of $f$ should exist. 
\item[(b)]
The subalgebra $\left(\Jac'(f,G)\right)^G$ should be 
uniquely determined by $(f,G)$ up to isomorphism.
\end{enumerate}
\end{conjecture}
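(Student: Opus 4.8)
The plan is to establish the conjecture in the setting of Theorem~\ref{theorem_N}, that is, for an invertible polynomial $f$ with $G\subseteq G_f$, by constructing $\Jac'(f,G)$ explicitly and then showing that the axioms of Definition~\ref{axioms} rigidly determine it. The starting observation is that each summand $\Jac'(f,g)\cong\Omega'_{f,g}=\Omega_{f^g}$ is, by Corollary~\ref{corollary: Jac module on omega fg} and \eqref{eq:isom}, a \emph{cyclic} $\Jac(f)$-module, generated by a single class $e_g$ corresponding to the residue class of the volume form on $\Fix(g)$. Because $\Jac'(f,\id)=\Jac(f)$ is a subalgebra and, by axiom~\eqref{axiom: G-twisted comm}, every $a\in\Jac(f)$ satisfies $X\circ a=g^\ast(a)\circ X$ for $X\in\Jac'(f,g)$, associativity forces the product to be $\Jac(f)$-bilinear in this twisted sense: writing $X_g=a_g\circ e_g$ and $X_h=a_h\circ e_h$ one gets $X_g\circ X_h=(a_g\cdot g^\ast(a_h))\circ(e_g\circ e_h)$. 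Hence the entire multiplication is determined by the finitely many elements $e_g\circ e_h=c_{g,h}\cdot e_{gh}$ with $c_{g,h}\in\Jac(f^{gh})$.

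The second step pins down these structure constants, at least on the diagonal. First I would record that the action of $\Jac'(f,g)$ on the untwisted sector $\Omega'_{f,\id}=\Omega_f$ is already fixed by the natural data: writing $\omega=a\vdash\zeta$ with $a\in\Jac(f)$ and combining twisted commutativity with axiom~\eqref{axiom_idmodule} gives $X\vdash\omega=g^\ast(a)\vdash(X\vdash\zeta)$, the natural $\Jac(f)$-action. Feeding this into the Frobenius-type adjunction of axiom~\eqref{axiom_J_fG} with $h=g^{-1}$ and $\omega'\in\Omega'_{f,\id}$ computes $J_{f,G}(e_g\circ e_{g^{-1}},\omega')$ for all $\omega'\in\Omega_f$ purely in terms of the \emph{explicit} orbifold residue pairing of Definition~\ref{def_JfG}; by non-degeneracy this determines the diagonal constants $c_{g,g^{-1}}$ uniquely. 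This is the seed of the uniqueness argument.

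To reach the general constants $c_{g,h}$, and simultaneously to prove existence, I would exploit the Thom--Sebastiani structure of invertible polynomials. Such an $f$ is a Sebastiani--Thom sum of atomic invertible polynomials (Fermat, chain and loop types) in disjoint variables; the fixed loci $\Fix(g)$ are coordinate subspaces and $G_f$ splits accordingly, so the subgroup-compatibility axiom~\eqref{axiom: G-H} lets me reduce to $G=\langle g,h\rangle$ and then factor both the modules and the candidate product as graded tensor products over atomic blocks. On an atomic block the product $e_g\circ e_h$ is given by restriction of functions to $\Fix(gh)$, multiplied by the Hessian of the restriction of $f$ to the excess directions $\Fix(g)\cap\Fix(h)$ modulo $\Fix(gh)$, with sign and age normalization dictated by $J_{f,G}$; one checks directly that this formula obeys all the axioms and matches the seed values. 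Existence is then the claim that this Thom--Sebastiani-compatible product is associative and Frobenius, while uniqueness is the claim that any product satisfying the axioms must reduce block by block to the same formula.

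I expect the main obstacle to be associativity of the constructed product, equivalently the consistency of the over-determined system $\{c_{g,h}\}$ produced by the adjunction. The difficulty is concentrated precisely where the strata are non-transversal, i.e. where $\Fix(g)\cap\Fix(h)\neq\Fix(gh)$ and a genuine Hessian correction on the excess directions appears: verifying associativity amounts to a nontrivial identity relating the Hessians on the nested loci $\Fix(g),\Fix(h),\Fix(gh),\Fix(ghk)$, together with careful bookkeeping of the Koszul signs and the age factors $\epi[-\tfrac12\age(g)]$. The Thom--Sebastiani reduction is what makes this tractable, since for atomic invertible polynomials the excess directions are governed by the explicit exponent matrix, but assembling the pieces while respecting the $\ZZ/2\ZZ$-grading and the equivariance of axioms~\eqref{axiom_ringauto}--\eqref{axiom_J_fG} is where the real work lies. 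Finally, part~(b) for the $G$-invariants $(\Jac'(f,G))^G$ then follows from the uniqueness of $\Jac'(f,G)$ together with the observation, already made in the excerpt, that $G$-invariance of the $\Aut(f,G)$-invariant product makes $(\Jac'(f,G))^G$ a well-defined $\ZZ/2\ZZ$-graded commutative subalgebra.
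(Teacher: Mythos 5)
Your overall architecture coincides with the paper's: reduce everything to the structure constants of $v_g\circ v_h$ using the cyclic $\Jac(f)$-module structure and twisted commutativity, pin down the diagonal constants $c_{g,g^{-1}}$ from the orbifold residue pairing via axiom~\eqref{axiom_J_fG}, express the product through the Hessian minor on the excess directions (the paper's $H_{g,h}$, supported on $I_{g,h}=I_g^c\cap I_h^c$ --- note your phrase ``$\Fix(g)\cap\Fix(h)$ modulo $\Fix(gh)$'' has the quotient backwards), and verify associativity via the Thom--Sebastiani decomposition into chain and loop atoms. Part (b) following from uniqueness of $\Jac'(f,G)$ is also exactly the paper's logic.

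There is, however, a genuine gap in your uniqueness half: you never supply the mechanism that forces $v_g\circ v_h$ to lie in the one-dimensional subspace $\CC\,[H_{g,h}]\,v_{gh}$ of $\Jac(f^{gh})v_{gh}$ (and to vanish for non-spanning pairs), and the tools you invoke cannot produce it. The adjunction of axiom~\eqref{axiom_J_fG} only rewrites $J_{f,gh}\bigl((v_g\circ v_h)\vdash\zeta,\omega'\bigr)$ in terms of the other unknown $v_g\circ v_{(gh)^{-1}}$; it determines the diagonal $c_{g,g^{-1}}$ (because there the right-hand side lands in the known identity sector) but for $h\neq g^{-1}$ it yields only relations among unknowns, so non-degeneracy of $J_{f,G}$ gives you a coupled system, not a unique solution. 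The paper closes this gap with axiom~\eqref{axiom_ringauto}: it applies explicit elements of $\Aut(f,G)$ --- the automorphisms $\varphi_r$ rescaling the variables of a chain atom, and an element of $G_{f_\nu}\setminus G_{f_\nu}^{\SL}$ for a loop atom --- to show that any class $[\gamma_{g,h}]$ with $v_g\circ v_h=[\gamma_{g,h}]v_{gh}$ transforms by a nontrivial character and hence must be concentrated on the single monomial of Proposition~\ref{monomial basis} underlying $H_{g,h}$ (Propositions~\ref{prop: sum of the fixes} and \ref{proposition_c_{g,h}}). Only after this reduction do associativity, the factorization $(g_1,g_2,h_1,h_2)$ and the diagonal values determine the remaining scalars. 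You flag equivariance only as bookkeeping for the existence check, whereas it is the engine of the uniqueness proof; similarly, the associativity of the explicit cocycle $\overline{c}_{g,h}$ with the permutation signs $\widetilde{\varepsilon}$, which you correctly identify as the main obstacle, is the bulk of the existence proof and remains to be carried out in your plan.
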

\begin{definition}
Suppose that Conjecture~\ref{existence&uniqueness} holds for the pair $(f,G)$. 
The $\ZZ/2\ZZ$-graded commutative algebra 
\begin{equation}
\Jac(f,G):=\left(\Jac'(f,G)\right)^G
\end{equation}
is called the {\em orbifold Jacobian algebra} of $(f,G)$.
\end{definition}
In Theorem~\ref{theorem_N}
we prove Conjecture \ref{existence&uniqueness} (actually a stronger statement than it),
for a large class of polynomials $f$ --- so--called invertible polynomials and any symmetry group $G$ of it.
Under the isomorphism in axiom~\eqref{axiom_isom}, it follows from axiom~\eqref{axiom_J_fG} that 
the non-degenerate $G$-twisted $\ZZ/2\ZZ$-graded symmetric $\CC$-bilinear form $J_{f,G}$ on $\Omega'_{f,G}$ equips $\Jac'(f,G)$ with the structure of $\ZZ/2\ZZ$-graded Frobenius algebra.
If $G$ is a subgroup of $G_f^\SL$, then $\age(g)\in\ZZ$ for all $g\in G$,
the residue class $\zeta$ is $G$-invariant and 
the pairing $J_{f,G}$ induces a $\ZZ/2\ZZ$-graded symmetric pairing on $\Omega_{f,G}$ 
due to the $G$-twisted $\ZZ/2\ZZ$-graded commutativity (Proposition~\ref{prop:G-sym J}).
Therefore, it follows easily that ${\rm Jac}(f,G)$ for $G\subseteq G_f^\SL$ is equipped with a structure of 
$\ZZ/2\ZZ$-graded commutative Frobenius algebra, which will be of our main interest. 

\section{Orbifold Jacobian algebras for invertible polynomials}
\subsection{Invertible polynomials}
A polynomial $f\in\CC[x_1,\dots, x_N]$ is called a {\em weighted homogeneous} polynomial  
if there are positive integers $w_1,\dots ,w_N$ and $d$ such that 
\begin{equation}
f(\lambda^{w_1} x_1, \dots, \lambda^{w_N} x_N) = \lambda^d f(x_1,\dots ,x_N)
\end{equation}
for all $\lambda \in \CC^\ast$.
We call $(w_1,\dots ,w_N;d)$ a system of weights of $f$.
A weighted homogeneous polynomial $f$ is called {\em non-degenerate}
if it has at most an isolated critical point at the origin in $\CC^N$, equivalently, if the Jacobian algebra $\Jac(f)$ of $f$ 
is finite-dimensional.
\begin{definition}
A weighted homogeneous polynomial $f\in\CC[x_1,\dots, x_N]$ is called {\em invertible} if 
the following conditions are satisfied.
\begin{enumerate}
\item 
The number of variables ($=N$) coincides with the number of monomials in the polynomial $f$, namely, 
\begin{equation}
f(x_1,\dots ,x_N)=\sum_{i=1}^N c_i\prod_{j=1}^Nx_j^{E_{ij}}
\end{equation}
for some coefficients $c_i\in\CC^\ast$ and non-negative integers 
$E_{ij}$ for $i,j=1,\dots, N$.
\item
The matrix $E:=(E_{ij})$ is invertible over $\QQ$.
\item
The polynomial $f$ and the {\em Berglund--H\"{u}bsch transpose} $\widetilde f$ of $f$ defined by
\begin{equation}
\widetilde{f}(x_1,\dots ,x_N):=\sum_{i=1}^N c_i\prod_{j=1}^N x_j^{E_{ji}}
\end{equation}
are non-degenerate.
\end{enumerate}
\end{definition}
\begin{definition}\label{def: q-weights}
Let $f(x_1,\ldots ,x_N)=\sum_{i=1}^N c_i\prod_{j=1}^N x_j^{E_{ij}}$ be an invertible polynomial.
Define rational numbers $q_1,\ldots, q_N$ by the unique solution of the equation
\begin{equation}
E
\begin{pmatrix}
q_1\\
\vdots\\
q_N
\end{pmatrix}
=
\begin{pmatrix}
1\\
\vdots\\
1
\end{pmatrix}.
\end{equation}
Namely, set $q_i := w_i/d$, $i=1, \ldots , N$, for the system of weights $(w_1,\ldots ,w_N;d)$.
\end{definition}

If $f(x_1, \ldots, x_N)$ is an invertible polynomial, then we have 
\begin{equation}\label{eq_Gfinvertible}
G_f=\left\{(\lambda_1,\dots ,\lambda_N)\in (\CC^\ast)^N \, \left| \, \prod_{j=1}^N \lambda_j ^{E_{1j}}=\dots =\prod_{j=1}^N\lambda_j^{E_{Nj}}=1 \right\} \right. ,
\end{equation}
and hence $G_f$ is a finite group. 
It is easy to see that $G_f$ contains an element $g_0:=(q_1,\dots, q_N)$.

It is important to note the following
\begin{proposition}\label{prop_GfSL_proper}
The group $G_f^\SL=G_f\cap \SL(N;\CC)$ is a proper subgroup of $G_f$.
\end{proposition}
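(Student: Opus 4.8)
The plan is to show that the determinant character $\det\colon G_f\to\CC^\ast$, $g=(\lambda_1,\dots,\lambda_N)\mapsto\prod_{j=1}^N\lambda_j$, is nontrivial. Since $G_f^\SL=G_f\cap\SL(N;\CC)$ is by definition the kernel of this character, a single element of $G_f$ with determinant $\neq 1$ proves $G_f^\SL\subsetneq G_f$. First I would put the description \eqref{eq_Gfinvertible} of $G_f$ into additive form: writing $\lambda_j=\epi[t_j]$, the conditions $\prod_j\lambda_j^{E_{ij}}=1$ become $E t\in\ZZ^N$, so $G_f\cong(E^{-1}\ZZ^N)/\ZZ^N$, and the determinant character reads $t\mapsto\epi[\sum_j t_j]=\epi[\mathbf 1^\top t]$ with $\mathbf 1=(1,\dots,1)^\top$.

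Next, for each index $i$ I would consider the element $g_i\in G_f$ corresponding to $t=E^{-1}e_i$, the $i$-th column of $E^{-1}$; it lies in $G_f$ because $E t=e_i\in\ZZ^N$. Its determinant is $\epi[\mathbf 1^\top E^{-1}e_i]$. Using $\mathbf 1^\top E^{-1}=(E^{-\top}\mathbf 1)^\top=(q')^\top$, where $q'$ is the unique solution of $E^\top q'=\mathbf 1$, I obtain $\det(g_i)=\epi[q'_i]$. By the definition of the Berglund--H\"ubsch transpose, the exponent matrix of $\widetilde f$ is $E^\top$, so $q'_1,\dots,q'_N$ are exactly the weights $\widetilde q_i=\widetilde w_i/\widetilde d$ of the invertible polynomial $\widetilde f$ (cf. Definition~\ref{def: q-weights} applied to $\widetilde f$). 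Consequently $G_f\subseteq\SL(N;\CC)$ would force $q'_i\in\ZZ$ for every $i$.

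The heart of the argument, and the main obstacle, is therefore the bound $0<q'_i<1$ for the weights of the non-degenerate invertible polynomial $\widetilde f$; positivity is built into the definition of the weights, so the content is the strict inequality $q'_i<1$. To prove it I would argue by contradiction: suppose $q'_j\geq 1$ for the exponent matrix $A:=E^\top$ of $\widetilde f$. Since $A$ is invertible over $\QQ$ its $j$-th column is nonzero, so some row $i$ has $A_{ij}\geq 1$; the relation $\sum_k A_{ik}q'_k=1$ with all summands nonnegative then forces $A_{ij}q'_j=1$ and $A_{ik}q'_k=0$ for $k\neq j$. As $q'_k>0$ and $A_{ij}\geq 1$ is an integer with $q'_j\geq 1$, this yields $A_{ij}=1$, $q'_j=1$, and $A_{ik}=0$ for $k\neq j$, i.e. the $i$-th monomial of $\widetilde f$ is a linear term $c_i x_j$. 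But then $\partial\widetilde f/\partial x_j$ has nonzero constant term, so the origin is not a critical point and $\Jac(\widetilde f)$ is not finite-dimensional, contradicting the non-degeneracy of $\widetilde f$. Hence $q'_j<1$, so $q'_j\notin\ZZ$ and $\det(g_i)=\epi[q'_i]\neq 1$, giving $g_i\in G_f\setminus G_f^\SL$.

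Finally I would remark why the naive candidate does not suffice: the grading element $g_0=(q_1,\dots,q_N)$ noted in the excerpt has determinant $\epi[\sum_i q_i]$, which can be $1$ (for instance $f=x_1^2+x_2^2$ has $\sum_i q_i=1$, so $g_0\in\SL(2;\CC)$). This is precisely the reason for passing instead to the columns of $E^{-1}$, equivalently to the weights of the transpose $\widetilde f$, whose non-integrality is guaranteed by the weight bound above.
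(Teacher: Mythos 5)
Your proof is correct in substance but takes a genuinely different route from the paper. The paper disposes of this proposition by citing the Berglund--H\"ubsch duality from \cite{ET11,Kr,EG-ZT}: $G_f\cong\Hom(G_{\widetilde f},\CC^\ast)$, under which $G_f^\SL$ corresponds to the characters killing the grading element $({\widetilde q}_1,\dots,{\widetilde q}_N)$ of $G_{\widetilde f}$, so properness follows from that element being nontrivial. You instead argue from scratch: you exhibit the explicit elements $g_i\in G_f$ given by the columns of $E^{-1}$, compute $\det(g_i)=\epi[{\widetilde q}_i]$, and prove elementarily that $0<{\widetilde q}_i<1$. Your determinant computation is exactly the concrete shadow of the cited duality (evaluation of the character $\det$ at the dual grading element), so in effect you have reproved the relevant fragment of \cite{ET11,Kr} rather than invoking it; what this buys is a self-contained, citation-free argument, at the cost of having to establish the weight bound yourself. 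One small inaccuracy in that last step: if $\widetilde f$ contained a linear monomial $c_ix_j$, the Jacobian algebra $\Jac(\widetilde f)$ would still be finite-dimensional (e.g.\ $\Jac(x_1+x_2^2)=0$), so your stated contradiction with non-degeneracy is not literally forced by the paper's definition (``at most an isolated critical point''); what you actually derive is that the origin is not a critical point. This only matters for degenerate inputs with linear monomials, for which the proposition itself (and the paper's cited argument) already fails --- e.g.\ $f=x_1$ has $G_f=G_f^\SL=\{1\}$ --- so both proofs tacitly assume all weights satisfy $q_i<1$; under that standing convention your argument is complete, and your closing observation that the naive candidate $g_0=(q_1,\dots,q_N)$ can land in $\SL(N;\CC)$ correctly identifies why one must pass to the transposed weights.
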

\begin{proof}
Let $\widetilde f$ be the Berglund--H\"ubsch transpose of $f$. 
It is known by \cite{ET11} and \cite{Kr} (see also Proposition~2 in \cite{EG-ZT}) that 
\[
G_f^\SL\cong \Hom(G_{\widetilde f}/\langle ({\widetilde q}_1,\dots, {\widetilde q}_N) \rangle,\CC^\ast)\subsetneq
\Hom(G_{\widetilde f},\CC^\ast)\cong G_f,
\]
where $({\widetilde q}_1,\dots, {\widetilde q}_N)$ is the unique solution of the equation
$({\widetilde q}_1,\dots, {\widetilde q}_N)E=(1,\dots, 1)$.
\end{proof}

The following is our first theorem of this paper.
\begin{theorem} \label{theorem_N}
Let $f$ be an invertible polynomial and $G$ a subgroup of $G_f$.
There exists a unique $G$-twisted Jacobian algebra $\Jac'(f,G)$ of $f$ up to isomorphism.
Namely, it is uniquely characterized by the axioms in Definition~\ref{axioms}.
In particular, the orbifold Jacobian algebra $\Jac(f,G)$ of $(f,G)$ exists. 
\end{theorem}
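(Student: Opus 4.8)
The plan is to reduce the entire structure to a finite collection of structure constants and then pin these down using the non-degenerate pairing $J_{f,G}$. First I would record the structural input special to invertible polynomials: for every $g\in G\subseteq G_f$ the fixed locus $\Fix(g)$ is a coordinate subspace and the restriction $f^g$ is again an invertible polynomial in the variables $x_{i_1},\dots,x_{i_{N_g}}$ of $\Fix(g)$; consequently $\Omega'_{f,g}=\Omega_{f^g}$ is a free $\Jac(f^g)$-module of rank one, with distinguished generator the residue class $e_g:=[dx_{i_1}\wedge\dots\wedge dx_{i_{N_g}}]$. By axiom~\eqref{axiom_isom} the multiplication $\circ$ is recovered from the module action $\vdash$ through $(X\circ Y)\vdash\zeta=X\vdash(Y\vdash\zeta)$, so it suffices to determine the maps $\Jac'(f,g)\otimes\Omega'_{f,h}\to\Omega'_{f,gh}$. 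Since each $\Omega'_{f,g}$ is a cyclic $\Jac(f)$-module generated by $e_g$ (via Corollary~\ref{corollary: Jac module on omega fg} and axiom~\eqref{axiom_idmodule}), it is enough to know the single class $e_g\vdash e_h\in\Omega_{f^{gh}}$ for each pair $(g,h)$.

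For uniqueness, I would use that $J_{f,G}$ is non-degenerate and pairs $\Omega'_{f,gh}$ perfectly with $\Omega'_{f,(gh)^{-1}}$. Axiom~\eqref{axiom_J_fG} rewrites every pairing $J_{f,G}(e_g\vdash e_h,\omega')$ in terms of the action of the group element $h^{-1}$ and of strictly lower data, and the residue formula of Definition~\ref{def_JfG} then forces $e_g\vdash e_h$ to equal a prescribed element of $\Omega_{f^{gh}}$ up to a single scalar $\lambda(g,h)\in\CC^\ast$; the natural candidate for the $\Jac(f^{gh})$-valued factor is a Hessian of $f$ in the variables fixed by $gh$ but moved by both $g$ and $h$. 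Associativity of $\circ$ then translates into a $2$-cocycle condition on the $\lambda(g,h)$, while axiom~\eqref{axiom: G-twisted comm} and $\Aut(f,G)$-invariance~\eqref{axiom_ringauto} impose symmetry and equivariance constraints. Finally I would show that any two systems of admissible scalars differ by a coboundary $\lambda'(g,h)/\lambda(g,h)=c_{g}c_{h}/c_{gh}$, so that rescaling the generators $e_g\mapsto c_g e_g$ produces a $\ZZ/2\ZZ$-graded, $G$-graded, $\Aut(f,G)$-equivariant algebra isomorphism; this yields uniqueness up to isomorphism, and the orbifold Jacobian algebra $\Jac(f,G)=(\Jac'(f,G))^G$ then follows by passing to $G$-invariants.

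For existence, I would exhibit one explicit admissible product. Decomposing the variables according to which of $\Fix(g),\Fix(h),\Fix(gh)$ they lie in and invoking the Thom--Sebastiani behaviour of invertible polynomials under restriction, I define $e_g\circ e_h$ as the prescribed Hessian class in $\Omega_{f^{gh}}$ times an explicit scalar built from $\age$ and $|G|$ as in Definition~\ref{def_JfG}. I would then verify, in order, the module compatibility~\eqref{axiom_idmodule}, the Frobenius identity~\eqref{axiom_J_fG} (a direct residue computation), the $G$-twisted graded commutativity~\eqref{axiom: G-twisted comm}, the $\Aut(f,G)$-invariance~\eqref{axiom_ringauto}, and the subgroup compatibility~\eqref{axiom: G-H}; the unit $v_\id$ and the identification $\Jac'(f,\id)=\Jac(f)$ are built in by construction.

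The hard part will be two intertwined verifications. First, associativity of the explicit product is not formal: comparing $e_g\circ(e_h\circ e_k)$ with $(e_g\circ e_h)\circ e_k$ requires a multiplicativity identity for the Hessian factors attached to the three nested fixed loci, and this is exactly where the invertibility of $f$ and the compatible behaviour of $f^g$ under further restriction must be used. Second, the compatibility~\eqref{axiom: G-H} with all intermediate groups $G\subseteq G'\subseteq G_f$ requires the chosen scalars to be stable under restriction from $G'$ to $G$; reconciling this with the $|G|$-dependent normalisation of the pairing is delicate, and I expect it to be the principal technical obstacle, most naturally handled by choosing the normalisation so that the structure constants are manifestly independent of the ambient group.
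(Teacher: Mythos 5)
Your overall architecture matches the paper's (reduce everything to the classes $v_g\circ v_h$, guess that the answer is a Hessian minor over the variables moved by both $g$ and $h$ times a scalar, normalize the scalars, then build an explicit model for existence), and your identification of the two hard points in the existence part --- associativity of the Hessian factors and compatibility with subgroups $G\subseteq G'$ --- is exactly right. But there is a genuine gap in your uniqueness mechanism. You claim that axiom~\eqref{axiom_J_fG} together with the residue formula ``forces $e_g\vdash e_h$ to equal a prescribed element of $\Omega_{f^{gh}}$ up to a single scalar.'' It does not: $v_g\circ v_h$ is a priori an arbitrary element $[\gamma_{g,h}]v_{gh}$ of the full module $\Jac(f^{gh})v_{gh}$, and axiom~\eqref{axiom_J_fG} only relates $J_{f,G}(v_g\vdash\omega_h,\omega')$ to $J_{f,G}(\omega_h,((h^{-1})^\ast v_g)\vdash\omega')$, whose right-hand side involves the equally unknown product of $v_g$ with the sector $\Omega'_{f,(gh)^{-1}}$ --- this is a constraint between two unknowns of the same kind, not a reduction to ``strictly lower data.'' (It does become a genuine computation when $h=g^{-1}$, which is how the paper fixes $c_{g,g^{-1}}$, but only \emph{after} the product is already known to lie on the line $\CC[H_{g,g^{-1}}]v_{\id}$.) The step that actually pins $[\gamma_{g,h}]$ down to a multiple of $[H_{g,h}]$ in the paper is axiom~\eqref{axiom_ringauto}: one decomposes $f$ as a Thom--Sebastiani sum of chain and loop polynomials, constructs explicit diagonal automorphisms $\varphi_r\in\Aut(f,G)$ of each factor, and uses their action on the monomial basis of Proposition~\ref{monomial basis} to show $\gamma_{g,h}$ must be divisible by the precise monomial representing the Hessian minor and independent of the remaining variables (Lemma~\ref{lemma: product explicitly}); the same device proves the vanishing $v_g\circ v_h=0$ for non-spanning pairs, a case your proposal does not address.

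A second, smaller gap is the assertion that any two admissible systems of scalars differ by a coboundary. In general a $2$-cocycle need not be a coboundary --- this is precisely the ambiguity in Kaufmann's construction that the paper is at pains to eliminate --- so this needs an argument. The paper supplies one by exploiting the canonical decomposition $g=g_1\cdots g_p$ along the Thom--Sebastiani factors: the rescaled generators $\widetilde v_g=\widetilde\varepsilon_{g_1,\dots,g_p}v_{g_1}\circ\dots\circ v_{g_p}$, together with the value of $c_{g,g^{-1}}$ forced by the pairing and the reduction of a general spanning pair to its factorization $(g_1,g_2,h_1,h_2)$ via associativity, yield a completely explicit and choice-free formula for all structure constants. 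Without some such concrete normalization your coboundary claim remains unsubstantiated.
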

In the subsequent subsections, we first prepare some notation, and then prove the uniqueness, 
and finally prove the existence.
\subsection{Notations}

Let $f(x_1,\ldots ,x_N)=\sum_{i=1}^N c_i\prod_{j=1}^N x_j^{E_{ij}}$ be an invertible polynomial.
Without loss of generality one may assume that $c_i=1$ for $i=1,\ldots, N$ by rescaling the variables.
According to \cite{KS}, an invertible polynomial $f$ can be written as a Thom--Sebastiani sum
$f=f_1\oplus\cdots\oplus f_p$ of invertible polynomials (in groups of different variables) 
$f_\nu$, $\nu=1,\dots, p$ of the following types:
\begin{enumerate}
\item $x_1^{a_1}x_2+x_2^{a_2}x_3+\dots+x_{m-1}^{a_{m-1}}x_m+x_m^{a_m}$ (chain type; $m \geq 1$);
\item  $x_1^{a_1}x_2+x_2^{a_2}x_3+\dots+x_{m-1}^{a_{m-1}}x_m+x_m^{a_m}x_1$ (loop type; $m \geq 2$).
\end{enumerate}
\begin{remark}
In \cite{KS} the authors distinguished also polynomials of the so called Fermat type: $x_1^{a_1}$, 
which is regarded as a chain type polynomial with $m = 1$ in this paper.
\end{remark}

We shall use the monomial basis of the Jacobian algebra $\Jac(f_\nu)$
\begin{proposition}[cf. \cite{Kreuzer}]\label{monomial basis}
For an invertible polynomial $f_\nu=x_1^{a_1}x_2+x_2^{a_2}x_3+\dots+x_{m-1}^{a_{m-1}}x_m+x_m^{a_m}$ of chain type with $m\ge 1$, 
the Jacobian algebra $\Jac(f_\nu)$ has a monomial basis consisting of all the monomials $x_1^{k_1}\cdots x_m^{k_m}$ such that
\begin{enumerate}
\item[1)] $0\le k_i\le a_i-1$,
\item[2)] if
\[
k_i=
\begin{cases}
a_i-1 {\text{ for all odd $i$, $i\le 2s-1$,}}\\
0 {\text{ for all even $i$, $i\le 2s-1$,}}
\end{cases}
\]
then $k_{2s}=0$.
\end{enumerate}

For an invertible polynomial $f_\nu=x_1^{a_1}x_2+x_2^{a_2}x_3+\dots+x_{m-1}^{a_{m-1}}x_m+x_m^{a_m}x_1$ of loop type with $m\ge 2$, 
the Jacobian algebra $\Jac(f_\nu)$ has a monomial basis consisting of all the monomials $x_1^{k_1}\cdots x_m^{k_m}$ with
$0 \le k_i \le a_i-1$.
\end{proposition}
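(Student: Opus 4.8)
The plan is to handle the chain and loop cases by the same three-step scheme: (i) write out the generators of the Jacobian ideal and extract from them a workable set of relations in $\Jac(f_\nu)$; (ii) show that the monomials listed in the statement span $\Jac(f_\nu)$; and (iii) check that the number of listed monomials equals the Milnor number $\mu_{f_\nu}=\dim_\CC\Jac(f_\nu)$, so that a spanning set of the correct cardinality is automatically a basis. For the chain type the partial derivatives are $a_1x_1^{a_1-1}x_2$, the $x_{j-1}^{a_{j-1}}+a_jx_j^{a_j-1}x_{j+1}$ for $2\le j\le m-1$, and $x_{m-1}^{a_{m-1}}+a_mx_m^{a_m-1}$; hence in $\Jac(f_\nu)$ one has the single vanishing relation $x_1^{a_1-1}x_2=0$ together with the substitution relations $x_i^{a_i}=-a_{i+1}x_{i+1}^{a_{i+1}-1}x_{i+2}$ for $1\le i\le m-1$ (the last one reading $x_{m-1}^{a_{m-1}}=-a_mx_m^{a_m-1}$). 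For the loop type every derivative has the form $x_{i-1}^{a_{i-1}}+a_ix_i^{a_i-1}x_{i+1}$ read cyclically, so all relations are substitution relations $x_i^{a_i}=-a_{i+1}x_{i+1}^{a_{i+1}-1}x_{i+2}$ with indices modulo $m$, and there is no vanishing relation.

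For step (ii) I would fix a monomial order refining the weighted grading and use the substitution relations to rewrite any monomial in which some exponent exceeds $a_i-1$. Because every substitution relation is weighted-homogeneous, a naive degree induction fails and the only genuine issue is termination and confluence: iterating the substitutions on a monomial that wraps around can return a scalar multiple of itself. For example in the $m=2$ loop, reducing $x_1^{a_1}$ and then $x_2^{a_2}$ in $x_1^{a_1}x_2^{a_2}$ produces the identity $(1-a_1a_2)x_1^{a_1}x_2^{a_2}=0$, forcing $x_1^{a_1}x_2^{a_2}=0$. Organising these reductions with respect to the chosen order shows that every monomial is a $\CC$-linear combination of monomials in the box $0\le k_i\le a_i-1$, which already gives the asserted spanning set for the loop type. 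For the chain type one must in addition feed the vanishing relation $x_1^{a_1-1}x_2=0$ into the substitutions: multiplying it by monomials and combining with $x_i^{a_i}=-a_{i+1}x_{i+1}^{a_{i+1}-1}x_{i+2}$ inductively kills the alternating corner monomials singled out by condition (2). Establishing that the relations kill exactly these corner monomials is the combinatorial heart of the argument.

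Finally, for step (iii) I would compute $\mu_{f_\nu}=\prod_{i=1}^m\frac{1-q_i}{q_i}$ from the weights of $f_\nu$ (the standard formula for a non-degenerate weighted-homogeneous polynomial). For the loop type the relations $a_iq_i+q_{i+1}=1$ give $\mu_{f_\nu}=\prod_{i=1}^m a_i$, exactly the number of box monomials, so the box is a basis. For the chain type, with $q_m=1/a_m$ and $q_i=(1-q_{i+1})/a_i$, the product telescopes; on the other side the number of listed monomials is $\prod_{i=1}^m a_i$ minus the corner monomials removed by condition (2), which I would write as $\sum_{s}(a_{2s}-1)\prod_{i>2s}a_i$ and then telescope into an alternating sum $\sum_{j}(-1)^{j-1}\prod_{i\ge j}a_i$. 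Comparing the two telescopings pins down the basis. The step I expect to be the main obstacle is (ii) for the chain: making the interaction between the single vanishing relation and the substitution relations rigorous, and matching the monomials thereby forced to vanish exactly with the exclusions of condition (2), paying particular attention to the top of the chain, where for odd $m$ the fully alternating monomial $x_1^{a_1-1}x_3^{a_3-1}\cdots x_m^{a_m-1}$ must also be checked to vanish in order for the count to land on $\mu_{f_\nu}$. A Gröbner basis computation is an alternative, but the cyclic symmetry of the loop obstructs taking all the pure powers $x_i^{a_i}$ as leading terms simultaneously, so the explicit reduction-and-count route looks more robust.
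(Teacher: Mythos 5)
The paper does not actually prove this proposition --- it is quoted from the literature (the citation of Kreuzer), so there is no in-paper argument to compare yours against. Judged on its own, your three-step plan (extract the relations, show the listed monomials span by rewriting, then match the count against $\mu_{f_\nu}$) is the standard and correct way to establish such a statement, and your relations are right: for the chain the ideal gives the single vanishing relation $x_1^{a_1-1}x_2=0$ plus the substitutions $x_i^{a_i}=-a_{i+1}x_{i+1}^{a_{i+1}-1}x_{i+2}$, and for the loop only cyclic substitutions. Your loop count $\mu=\prod a_i$ and your treatment of the wrap-around reductions (a cycle of substitutions returns $\lambda M$ with $\lambda\neq 1$, forcing $M=0$) are correct. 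What is still missing is exactly what you flag: the combinatorial verification that the chain relations kill precisely the corner monomials of condition (2) and nothing else. As written this is a plan, not a proof; the spanning step for the chain (propagating $x_1^{a_1-1}x_2=0$ to $x_j^{a_j}x_{j+1}=0$ by multiplying the $j$-th derivative by $x_j$ and inducting, which is the computation the paper itself performs inside the proof of Proposition~\ref{proposition: H_gh is a jac element}) needs to be carried out.

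Your remark about the fully alternating monomial for odd $m$ is not a side worry --- it is a genuine discrepancy that your counting step would expose. For odd $m$ the monomial $x_1^{a_1-1}x_3^{a_3-1}\cdots x_m^{a_m-1}$ (all even exponents zero) satisfies conditions 1) and 2) as literally written (every hypothesis ``then $k_{2s}=0$'' holds), yet it vanishes in $\Jac(f_\nu)$: already for $m=1$ it is $x_1^{a_1-1}=0$, and for $f=x_1^2x_2+x_2^2x_3+x_3^2$ one has $x_1x_3=-\tfrac12 x_1x_2^2=0$ while the listed set has $6$ elements against $\mu=5$. In general your two telescopings give $\mu=\sum_{j=1}^{m+1}(-1)^{j-1}\prod_{i\ge j}a_i$ while the listed count equals this sum truncated at $j=m$, so they differ by $1$ exactly when $m$ is odd. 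So either condition (2) must be read as extending to $2s-1=m$ (with the vacuous conclusion ``$k_{m+1}=0$'' interpreted as excluding that monomial), or the fully alternating monomial must be excluded explicitly; under the literal reading your argument would show the listed set is spanning but not independent for odd $m$. Making this exclusion explicit, and then proving the vanishing $x_1^{a_1-1}x_3^{a_3-1}\cdots x_m^{a_m-1}=0$ in general (it follows by pushing the relation $x_1^{a_1-1}x_2=0$ up the chain), is the one substantive addition your plan needs before the dimension count closes the argument.
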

Let $I_g := \{i_1,\dots,i_{N_g}\}$ be a subset of $\{1,\dots, N\}$ such that $\Fix(g)=\{x\in\CC^N~|~x_{j}=0, j\notin I_g\}$.
In particular, $I_{\id}=\{1,\dots, N\}$. Denote by $I_g^c$ the complement of $I_g$ in $I_\id$.
In what follows, we are mostly interested in special pairs of elements of $G_f$.
\begin{definition}\label{definition: primitive decomposition}
Let $f=f(x_1,\dots,x_N)$ be an invertible polynomial.
\begin{enumerate}
\item
An ordered pair $(g,h)$ of elements of $G_f$ is called {\em spanning} if 
\begin{equation}
I_g\cup I_h \cup I_{gh} = \{1,\dots,N\}.
\end{equation}
\item
For a spanning pair $(g,h)$ of elements of $G_f$, define $I_{g,h}:=I_g^c \cap I_h^c$.
\item
For a spanning pair $(g,h)$ of elements of $G_f$, 
there always exist $g_1,g_2,h_1,h_2 \in G_f$ such that $g = g_1g_2$ and $h = h_1h_2$ 
with $g_2 h_2 = \id$ and $I_{g_1,h_1} = \emptyset$.
The tuple $(g_1, g_2, h_1 ,h_2)$ is called the {\em factorization} of $(g,h)$.
\end{enumerate}
\end{definition}
\begin{remark}
For a spanning pair $(g,h)$ of elements of $G_f$, up to a reordering of the variables, we have
\begin{equation}\label{eq: gh explicit form}
\begin{aligned}
g = &(0,\dots,0, \alpha_1, \dots, \alpha_p, \beta_1, \dots, \beta_q)\\ 
h = &(\gamma_1,\dots, \gamma_r, 0,\dots,0, 1-\beta_1,\dots, 1-\beta_q),
\end{aligned}
\end{equation}
for some rational numbers $0< \alpha_i, \beta_i,\gamma_i <1$ 
and integers $p,q,r$ such that $0 \le r \le N_{g}$ and $N_{g} + p + q = r + N_{h} + q = N$. 
In this presentation,  
we have $I_g \cap I_h = \{i_{r+1},\dots, i_{N-q-p}\}$, $I_{g,h}= \{i_{N-q+1},\dots, i_{N}\}$ and 
\begin{align*}
g_1 = &(0,\dots,0, \alpha_1, \dots, \alpha_p, 0, \dots, 0),\\ 
g_2 = &(0,\dots,0, 0, \dots, 0, \beta_1, \dots, \beta_q),\\ 
h_1 = &(\gamma_1,\dots, \gamma_r, 0,\dots,0, 0,\dots, 0),\\
h_2 = &(0,\dots,0, 0,\dots,0, 1-\beta_1,\dots, 1-\beta_q).
\end{align*}
\end{remark}

We introduce one of most important objects in this paper.
\begin{definition}\label{definition: Hgh}
Let $f=f(x_1,\dots,x_N)$ be an invertible polynomial.
For each spanning pair $(g,h)$ of elements of $G_f$, 
define a polynomial 
$H_{g,h}\in\CC[x_1,\dots, x_N]$ by
\begin{equation}
H_{g,h}:=
\begin{cases}
\widetilde{m}_{g,h}\det \left(\frac{\partial^2 f}{\partial x_{i} \partial x_{j}}\right)_{i,j\in I_{g,h}} 
& \text{if} \quad I_{g,h} \neq \emptyset\\
1 & \text{if} \quad I_{g,h} = \emptyset
\end{cases},
\end{equation}
where $\widetilde{m}_{g,h}\in\CC^\ast$ is the constant uniquely determined by the following equation in $\Jac(f^{gh})$
\begin{equation}\label{H and hessians}
\frac{1}{\mu_{f^{g\cap h}}}[\hess(f^{g\cap h})H_{g,h}]=\frac{1}{\mu_{f^{gh}}}[\hess(f^{gh})],
\end{equation}
where $f^{g\cap h}$ is an invertible polynomial given by the restriction $f|_{\Fix(g)\cap\Fix(h)}$ of $f$ to 
the locus $\Fix(g)\cap \Fix(h)$.
\end{definition}
\begin{remark}
The polynomial $H_{g,h}$ is a non-zero constant multiple of the determinant of a minor of the Hessian matrix of $f(x_1,\dots,x_N)$. 
Since $I_g\cap I_h\subseteq I_{gh}$ and $I_{g,h}\subseteq I_{gh}$, $\hess(f^{g\cap h})$ and $H_{g,h}$ define elements of $\Jac(f^{gh})$. 
\end{remark}
\begin{remark}
Let $(g,h)$ be a spanning pair of elements of $G_f$.
Suppose that $\Fix(g)=\{{\bf 0\}}$. Then $h=g^{-1}$. It is easy to check that $H_{g,h}=\frac{1}{\mu_f}[\hess(f)]$ 
by the explanation of $\widetilde{m}_{g,h}$ below.
Recall also Definition~\ref{def:1} that if $\Fix(g)\cap \Fix(h)=\{{\bf 0\}}$ 
then $\mu_{f^{g\cap h}}=1$ and $\hess(f^{g\cap h})=1$.
\end{remark}
We explain the existence and the uniqueness of $\widetilde{m}_{g,h}$ in Definition~\ref{definition: Hgh}. Suppose that $f=f_1\oplus\cdots\oplus f_p$ is a Thom--Sebastiani sum 
such that each $f_\nu =f_\nu(x_{i_1},\dots,x_{i_m})$, $\nu=1,\dots, p$ is either of chain type or loop type. 
Set $I_\nu := \{i_1,\dots,i_m\} \subseteq \{1,\dots,N\}$ for each $\nu$.
Then $\Jac(f) = \Jac(f_1) \otimes \dots \otimes \Jac(f_p)$ and 
\begin{equation}
  \det \left(\frac{\partial^2 f}{\partial x_{i} \partial x_{j}}\right)_{i,j \in I_\id} = \ \prod_{\nu=1}^p \det \left(\frac{\partial^2 f_\nu}{\partial x_{i} \partial x_{j}}\right)_{i,j \in I_\nu}.
\end{equation}
For each $g\in G_f$ and $f_\nu$ as above the following holds:
\begin{itemize}
\item If $f_\nu$ is of the chain type $f_\nu = x_{i_1}^{a_1} x_{i_2} + \dots + x_{i_{m-1}}^{a_{m-1}}x_{i_m} + x_{i_m}^{a_m}$, 
then there exists $l$, $0\le l\le m$ such that $\{i_1,\dots,i_{l}\} \subseteq I_{g}^c$ and $\{i_{l+1},\dots,i_m\} \subseteq I_{g}$. 
\item If $f_\nu$ is of loop type, then $I_\nu \subseteq I_{g}$ or $I_\nu \subseteq I_{g}^c$.
\end{itemize}
We classify the possible cases of $I_{g,h}$. 
\begin{lemma}\label{lemma_cases}
Let $(g,h)$ be a spanning pair of elements of $G_f$.
Suppose that $f=f_1\oplus\cdots\oplus f_p$ is a Thom--Sebastiani sum 
such that each $f_\nu$, $\nu=1,\dots, p$ is either of chain type or loop type. 
Then, for each $f_\nu = f_\nu(x_{i_1}, \dots, x_{i_m})$, the either one of the following holds:
\begin{enumerate}
\item[(i)]
$f_\nu$ is of chain type and, for some $0 \leq l \leq m$,
\begin{itemize}
\item[(a)]
$\{i_1,\dots, i_m\}\subseteq I_g$, $\{i_1,\dots, i_l\}\subseteq I_h^c$ and $\{i_{l+1},\dots, i_m\}\subseteq I_h$,
\item[(a')] 
$\{i_1,\dots, i_m\}\subseteq I_h$, $\{i_1,\dots, i_l\}\subseteq I_g^c$ and $\{i_{l+1},\dots, i_m\}\subseteq I_g$,
\item[(b)] 
$\{i_1,\dots, i_l\}\subseteq I_{g,h}$ and $\{i_{l+1},\dots, i_m\}\subseteq I_g\cap I_h$.
\end{itemize}
\item[(ii)]
$f_\nu$ is of loop type and 
\begin{itemize}
\item[(a)]
$\{i_1,\dots,i_m\}\subseteq I_g\cap I_h$,
\item[(b)]
$\{i_1,\dots,i_m\}\subseteq I_g\cap I_h^c$,
\item[(b')]
$\{i_1,\dots,i_m\}\subseteq  I_g^c\cap I_h$,
\item[(c)]
$\{i_1,\dots,i_m\}\subseteq I_{g,h}$.
\end{itemize}
\end{enumerate}
\end{lemma}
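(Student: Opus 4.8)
The plan is to reduce the statement to a single Thom--Sebastiani summand and to analyze each $f_\nu$ in isolation, since the two structural facts recalled just before the lemma describe, for a single element of $G_f$, exactly how each block sits relative to its fixed index set. I would apply these facts simultaneously to the three elements $g$, $h$ and $gh$, and then feed in the spanning hypothesis after rewriting it in the equivalent form
\[
I_g^c\cap I_h^c\cap I_{gh}^c=\emptyset,
\]
restricted to the index set $I_\nu=\{i_1,\dots,i_m\}$ of the block under consideration. The entire content of the lemma is that, block by block, one of the listed configurations is forced.

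For a chain-type block the recalled fact produces three initial segments: the indices moved by $g$ form $\{i_1,\dots,i_{l_g}\}$, those moved by $h$ form $\{i_1,\dots,i_{l_h}\}$, and those moved by $gh$ form $\{i_1,\dots,i_{l_{gh}}\}$, for suitable $0\le l_g,l_h,l_{gh}\le m$. Since these three sets are nested prefixes, their intersection is the shortest of them, so the spanning condition $I_g^c\cap I_h^c\cap I_{gh}^c\cap I_\nu=\emptyset$ is equivalent to $\min(l_g,l_h,l_{gh})=0$. I would then split into the three cases according to which length vanishes: $l_g=0$ gives $I_\nu\subseteq I_g$ together with the prefix/suffix splitting coming from $h$, which is case (i)(a); $l_h=0$ gives (i)(a') symmetrically; and $l_{gh}=0$ is the case to be identified with (i)(b).

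The one step that needs a genuine argument, and which I expect to be the main obstacle, is precisely case (i)(b). Here $l_{gh}=0$ only asserts that $gh$ fixes the whole block, and it is not a priori clear that the moved prefixes of $g$ and $h$ coincide. The key observation I would use is that on each coordinate $x_{i_k}$ of the block the eigenvalue of $gh$ is the product of those of $g$ and $h$; as this product equals $1$ for every $k$, the eigenvalue of $h$ is the inverse of that of $g$ throughout the block. In particular $i_k\in I_g^c$ if and only if $i_k\in I_h^c$, whence $l_g=l_h=:l$, the common moved prefix lies in $I_g^c\cap I_h^c=I_{g,h}$, and the common fixed suffix lies in $I_g\cap I_h$. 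This is exactly case (i)(b).

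For a loop-type block the recalled fact is all-or-nothing: each of $g$ and $h$ either fixes $I_\nu$ entirely or moves it entirely. The four cases (ii)(a), (b), (b'), (c) are then simply the $2\times 2$ enumeration of whether $g$ fixes $I_\nu$ and whether $h$ fixes $I_\nu$, so exhaustiveness is immediate and requires no further computation; the spanning hypothesis enters only to guarantee consistency in the last case, where $g$ and $h$ both move the block and hence $gh$ must fix it. Assembling the per-block conclusions over all $\nu=1,\dots,p$ then yields the statement.
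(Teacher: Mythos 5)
Your proof is correct and is exactly the ``straightforward'' verification the paper has in mind (its own proof is a one-line appeal to the explicit form of chain/loop polynomials and the diagonal action): you use the prefix/suffix structure of the moved indices for chain blocks, the all-or-nothing behaviour for loop blocks, and the spanning condition in the form $I_g^c\cap I_h^c\cap I_{gh}^c=\emptyset$ to force one of the listed configurations. The one genuinely non-immediate point, that $l_{gh}=0$ forces the moved prefixes of $g$ and $h$ to coincide, is handled correctly via the multiplicativity of the diagonal eigenvalues.
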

\begin{proof}
It is straightforward from the explicit form of an invertible polynomial of each type and the group action on it. 
\end{proof}
Obviously, only polynomials $f_\nu$ satisfying $I_\nu \cap I_{g,h} \neq \emptyset$ contribute non-trivially to $H_{g,h}$.
Such an $f_\nu$ satisfies the either one of the following two by Lemma~\ref{lemma_cases}:
\begin{itemize}
\item[(a)]
$I_\nu = \{ i_1,\dots,i_m \} \subseteq I_{g,h}$.
\item[(b)]
$f_\nu$ is of the chain type and, for some $0 \leq l \leq m-1$,
$\{i_1,\dots,i_{l}\} \subseteq I_{g,h}$ and $\{i_{l+1},\dots,i_m\} \subseteq I_g \cap I_h$.
\end{itemize}
Set $\Gamma_a:=\{\nu~|~f_\nu \text{ satisfies (a)}\}$ and $\Gamma_b:=\{\nu~|~f_\nu \text{ satisfies (b)}\}$.
Since $I_{gh}=I_{g,h}\cup(I_g\cap I_h)$, we have 
\begin{equation}
f^{gh} = \bigoplus_{\nu_a\in\Gamma_a} f_{\nu_a} \oplus \bigoplus_{\nu_b\in\Gamma_b} f_{\nu_b}
\oplus\bigoplus_{\substack{\nu\text{ s.t. }\\ I_\nu\subseteq I_g\cap I_h}} f_{\nu},
\end{equation}
where $\oplus$ denotes a Thom-Sebastiani sum and hence
\begin{equation}
\Jac(f^{gh}) = \bigotimes_{\nu_a\in\Gamma_a} \Jac(f_{\nu_a}) \otimes \bigotimes_{\nu_b\in\Gamma_b} \Jac(f_{\nu_b})
\otimes\bigotimes_{\substack{\nu\text{ s.t. }\\ I_\nu\subseteq I_g\cap I_h}} \Jac(f_{\nu}).
\end{equation}
Consider the factorization
\begin{equation}
\det \left(\frac{\partial^2 f}{\partial x_{i} \partial x_{j}}\right)_{i,j\in I_{g,h}} 
=\prod_{\nu_a \in \Gamma_a} \widetilde{H}_{a}^{(\nu_a)}\cdot  \prod_{\nu_b \in \Gamma_b} \widetilde{H}_{b}^{(\nu_b)},
\end{equation}
where
\begin{equation}
\widetilde{H}_{a}^{(\nu_a)}:=\det \left(\frac{\partial^2 f_{\nu_a}}{\partial x_{i} \partial x_{j}}\right)_{i,j \in I_{\nu_a}}, \quad
\widetilde{H}_{b}^{(\nu_b)}:=\det \left(\frac{\partial^2 f_{\nu_b}}{\partial x_{i} \partial x_{j}}\right)_{i,j \in I_{\nu_b}\cap I_{g,h}}.
\end{equation}

Suppose for simplicity that 
$f_{\nu_b}=x_{1}^{a_1} x_{2} + \dots + x_{m-1}^{a_{m-1}}x_{m} + x_{m}^{a_m}$ with $I_{\nu_b}\cap I_{g,h}=\{1,\dots, l\}$.
By a direct calculation, we have the following equalities in $\Jac(f_{\nu_b})$;
\begin{subequations}\label{eq:4.6}
\begin{equation}
\left[\widetilde{H}_{b}^{(\nu_b)}\right]=\left(\prod_{i=1}^l a_i\right)\cdot \left(\sum_{j=1}^l (-1)^{l-j}\prod_{i=1}^j a_i\right)
\left[x_{1}^{a_1-2} x_{2}^{a_2-1} \cdots x_{l}^{a_l-1} x_{l+1}\right],
\end{equation}
\begin{equation}
\left[\hess(f_{\nu_b}|_{\Fix(g)\cap \Fix(h)})\right]=\left(\prod_{i=l+1}^m a_i\right)\cdot \left(\sum_{j=l}^m (-1)^{m-j}\prod_{i=l+1}^j a_i\right)
\left[x_{l+1}^{a_{l+1}-2} x_{l+2}^{a_{l+2}-1} \cdots x_m^{a_m-1}\right],
\end{equation}
\begin{equation}
\left[\hess(f_{\nu_b})\right]=\left(\prod_{i=1}^m a_i\right)\cdot \left(\sum_{j=0}^m (-1)^{m-j}\prod_{i=1}^j a_i\right)
\left[x_{1}^{a_1-2} x_{2}^{a_2-1} \cdots x_m^{a_m-1}\right].
\end{equation}
\end{subequations}
Note that 
\begin{equation}
\mu_{f_{\nu_b}}=\sum_{j=0}^m (-1)^{m-j}\prod_{i=1}^j a_i,\quad
\mu_{f_{\nu_b}|_{\Fix(g)\cap \Fix(h)}}=\sum_{j=l}^m (-1)^{m-j}\prod_{i=l+1}^j a_i.
\end{equation}
Hence, it is straightforward to see the existence and the uniqueness of $\widetilde{m}_{g,h}$.

\begin{proposition}\label{proposition: H_gh is a jac element}
Let $f=f(x_1,\dots,x_N)$ be an invertible polynomial.
For each spanning pair $(g,h)$ of elements of $G_f$, the following holds:
\begin{itemize}
\item[(i)] The class of $H_{g,h}$ is non-zero in $\Jac(f^{gh})$.
\item[(ii)] If $I_{g,h}=\emptyset$, then $[H_{g,g^{-1}}H_{h,h^{-1}}] = [H_{gh, (gh)^{-1}}]$ in $\Jac(f)$.
\item[(iii)] For any $j \in I_{g,h}$, the class of $x_j H_{g,h}$ is zero in $\Jac(f^{gh})$.
\end{itemize}
\end{proposition}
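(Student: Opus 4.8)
The plan is to reduce every assertion to the Thom--Sebastiani decomposition $f=f_1\oplus\cdots\oplus f_p$ into chains and loops, exactly as in the discussion preceding Proposition~\ref{proposition: H_gh is a jac element}. Under this decomposition $\Jac(f^{gh})=\bigotimes_\nu\Jac(f_\nu)$, and the class $[H_{g,h}]$ factors as $\widetilde m_{g,h}$ times a tensor product whose $\nu$-th factor is $[\hess(f_{\nu_a})]$ for a piece of type (a), the partial minor $[\widetilde H_b^{(\nu_b)}]$ for a piece of type (b), and $1$ for a piece with $I_\nu\subseteq I_g\cap I_h$. Throughout I will use the standard fact (the Proposition of Section~2, cf.\ \cite{AGV85}) that for a non-degenerate $F$ the algebra $\Jac(F)$ is Gorenstein with one-dimensional socle generated by $[\hess(F)]$; equivalently $[\hess(F)]\ne 0$ and $\mathfrak m\cdot[\hess(F)]=0$, where $\mathfrak m$ is the maximal ideal.

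For (i) no case analysis is needed. The right-hand side of the defining equation~\eqref{H and hessians} is $\frac{1}{\mu_{f^{gh}}}[\hess(f^{gh})]$, a non-zero scalar multiple of the socle generator of $\Jac(f^{gh})$, hence non-zero; therefore its left-hand side $\frac{1}{\mu_{f^{g\cap h}}}[\hess(f^{g\cap h})H_{g,h}]$ is non-zero in $\Jac(f^{gh})$, which forces $[H_{g,h}]\ne 0$. (If $I_{g,h}=\emptyset$ then $H_{g,h}=1$ and there is nothing to prove.)

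For (iii) I localize to the single piece $f_{\nu_0}$ whose variables contain the index $j$; since $x_j$ affects only the $\nu_0$-tensor factor and the remaining factors are non-zero by (i), it suffices to show that $x_j$ annihilates that factor in $\Jac(f_{\nu_0})$. If $f_{\nu_0}$ is of type (a) the factor is $[\hess(f_{\nu_0})]$ and $x_j\in\mathfrak m$ kills the socle. If $f_{\nu_0}$ is of type (b), then by \eqref{eq:4.6} the factor $[\widetilde H_b^{(\nu_0)}]$ is a non-zero multiple of $[x_1^{a_1-2}x_2^{a_2-1}\cdots x_l^{a_l-1}x_{l+1}]$ (in the simplified labelling of \eqref{eq:4.6}), and one checks directly, using the defining relations $\partial f_{\nu_0}/\partial x_i$ of the chain (equivalently the monomial basis of Proposition~\ref{monomial basis}), that multiplication by $x_j$ with $j\in I_{g,h}$ lands in the Jacobian ideal. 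This last, fiddly reduction — which proceeds by substituting the chain relations to telescope the monomial down to an element manifestly in the ideal, typically a multiple of $x_1^{a_1-1}x_2$ that vanishes by $\partial f_{\nu_0}/\partial x_1$ — is the main technical point of (iii).

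For (ii) the hypothesis $I_{g,h}=I_g^c\cap I_h^c=\emptyset$ means $I_g^c$ and $I_h^c$ are disjoint with union $I_{gh}^c$ (recall $I_{gh}=I_g\cap I_h$ in this case). By Lemma~\ref{lemma_cases} no Thom--Sebastiani piece straddles both $I_g^c$ and $I_h^c$: each contributes its complement-variables entirely to one side. Since the Hessian matrix is block diagonal across pieces, the minor defining $H_{gh,(gh)^{-1}}$ factorizes, namely $\det(\partial^2 f/\partial x_i\partial x_j)_{i,j\in I_{gh}^c}=\det(\cdots)_{i,j\in I_g^c}\cdot\det(\cdots)_{i,j\in I_h^c}$, so that $H_{g,g^{-1}}H_{h,h^{-1}}$ and $H_{gh,(gh)^{-1}}$ are both scalar multiples of one and the same polynomial, with non-zero class by (i). It then remains only to match the constants, i.e.\ to verify $\frac{1}{\mu_{f^{gh}}}[\hess(f^{gh})H_{g,g^{-1}}H_{h,h^{-1}}]=\frac{1}{\mu_f}[\hess(f)]$; together with the uniqueness of the scalar in the defining equation~\eqref{H and hessians} for the pair $(gh,(gh)^{-1})$, this yields the desired equality. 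I expect this constant-matching, which I would carry out piece by piece by combining the defining equations for $H_{g,g^{-1}}$ and $H_{h,h^{-1}}$ with the multiplicativity of $\mu$ and $\hess$ over Thom--Sebastiani sums, to be the main obstacle, since it is where all the per-chain bookkeeping from \eqref{eq:4.6} is concentrated.
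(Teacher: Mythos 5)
Your proposal follows the paper's own argument in all essentials: part (i) from the non-vanishing of $[\hess(f^{gh})]$ in the normalization \eqref{H and hessians}, part (iii) by reducing to the single Thom--Sebastiani factor containing $j$ and using the socle property of the Hessian class for type (a) pieces and the chain relations $[x_1^{a_1-1}x_2]=[x_j^{a_j}x_{j+1}]=0$ for type (b) pieces, and part (ii) by factorizing the Hessian minor over the decomposition and matching constants against the normalization \eqref{H and hessians} via the identities \eqref{eq:4.6}. The only difference is one of emphasis --- you make explicit the Gorenstein/socle justification that the paper leaves implicit --- so nothing further is needed.
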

\begin{proof}
Let the notations be as above. 
We may assume that $I_{g,h}\neq\emptyset$ since if $I_{g,h}=\emptyset$ the statements are trivially true.
The part (i) is almost clear by the equation~\eqref{H and hessians} since $[\hess(f^{gh})]$ is non-zero. 
The part (ii) follows from the normalization of $H_{g,h}$ 
by the equation~\eqref{H and hessians} in view of the equations~\eqref{eq:4.6}. 

To prove part (iii), first note that there is $\nu$, $1 \le \nu \le p$ such that $j\in I_\nu$ for some $f_\nu$ 
satisfying either one of (a) or (b) above. 
Due to the factorization of $\Jac(f^{gh})$, it is enough to show that $[x_j \widetilde{H}_a^{(\nu)}] = 0$ if $\nu \in \Gamma_a$ and $[x_j \widetilde{H}_b^{(\nu)}] = 0$ if $\nu \in \Gamma_b$.
Since the first case is almost clear, suppose that $f_\nu \in \Gamma_b$, $I_\nu=\{1,\dots, m\}$ and 
$I_\nu\cap I_{g,h}=\{1,\dots, l\}$.  
Recall again that $[\widetilde{H}_b^{(\nu)}]$ is  a non-zero constant multiple of $[x_1^{a_1-2}x_2^{a_2-1} \dots x_l^{a_l-1}x_{l+1}]$.
It is easy to calculate by induction that $[x_1^{a_1-1}x_2]=0$ and $[x_j^{a_j}x_{j+1}]=0$ in $\Jac(f_\nu)$ for $j=2,\dots, l$.
Therefore, we have $[x_j\widetilde{H}_b^{(\nu)}]=0$ in $\Jac(f_\nu)$ for $j=1,\dots, l$
(see also the description of the monomial basis in Proposition~\ref{monomial basis}). 
This completes the part (iii) of the proposition.
\end{proof}

\begin{proposition}\label{prop: supergrading agrees}
Let $f=f(x_1,\dots,x_N)$ be an invertible polynomial.
For each spanning pair $(g,h)$ of elements of $G_f$, we have 
\begin{equation}
(N-N_g) + (N-N_h) \equiv (N - N_{gh})\ (\text{\rm mod } 2).
\end{equation}
Moreover, if $I_{g,h}=\emptyset$ then $(N-N_g) + (N-N_h) = (N - N_{gh})$.
\end{proposition}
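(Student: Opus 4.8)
The plan is to reduce the statement to a clean set-theoretic identity among the index sets $I_g$, $I_h$ and $I_{gh}$, and then read off both assertions from a cardinality count. First I would record the elementary observation that, since $G$ acts diagonally, an index $i$ lies in $I_g$ precisely when the $i$-th diagonal entry of $g$, which I denote $\lambda_i^{(g)}$, equals $1$; consequently $gh$ acts on $x_i$ by $\lambda_i^{(g)}\lambda_i^{(h)}$. Two inclusions are then immediate and hold for \emph{every} pair $(g,h)$: if both $g$ and $h$ fix $x_i$ then so does $gh$, giving $I_g\cap I_h\subseteq I_{gh}$; and if $gh$ fixes $x_i$ then $\lambda_i^{(g)}=1$ is equivalent to $\lambda_i^{(h)}=1$, so such an $i$ lies in either $I_g\cap I_h$ or $I_g^c\cap I_h^c=I_{g,h}$, giving $I_{gh}\subseteq (I_g\cap I_h)\cup I_{g,h}$.

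The single place where the spanning hypothesis enters is the remaining inclusion $I_{g,h}\subseteq I_{gh}$. I would obtain it directly from the definition: the spanning condition $I_g\cup I_h\cup I_{gh}=\{1,\dots,N\}$ says exactly that $I_g^c\cap I_h^c\cap I_{gh}^c=\emptyset$, that is, $I_{g,h}\cap I_{gh}^c=\emptyset$. Combining the three inclusions, and noting that $I_g\cap I_h$ and $I_{g,h}=I_g^c\cap I_h^c$ are trivially disjoint, yields the key identity
\[
I_{gh}=(I_g\cap I_h)\sqcup I_{g,h}.
\]

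With this identity the proof becomes a cardinality count. Writing $N_g=|I_g|$, and using inclusion--exclusion together with $|I_g\cup I_h|=N-|I_g^c\cap I_h^c|=N-|I_{g,h}|$, one gets $|I_g\cap I_h|=N_g+N_h-N+|I_{g,h}|$, and hence
\[
N_{gh}=|I_g\cap I_h|+|I_{g,h}|=N_g+N_h-N+2|I_{g,h}|.
\]
Rearranging gives $(N-N_g)+(N-N_h)-(N-N_{gh})=2|I_{g,h}|$, which is even; this proves the congruence, and when $I_{g,h}=\emptyset$ the difference vanishes, yielding the asserted equality.

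The argument is essentially formal, so I do not anticipate a serious obstacle; the only point demanding care is the decomposition $I_{gh}=(I_g\cap I_h)\sqcup I_{g,h}$, and in particular keeping straight which of its inclusions are automatic and which one genuinely uses the spanning condition. As an independent cross-check one can instead run through the case analysis of Lemma~\ref{lemma_cases}: the blocks of type (i)(b) and (ii)(c) are exactly those meeting $I_{g,h}$, and each such block contributes its full size both to $I_{g,h}$ and to $I_{gh}$, reproducing the factor $2|I_{g,h}|$ block by block.
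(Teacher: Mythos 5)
Your proof is correct and follows essentially the same route as the paper: the paper's argument rests on the identities $N-N_g=|I_g^c\setminus I_{g,h}|+|I_{g,h}|$, $N-N_h=|I_h^c\setminus I_{g,h}|+|I_{g,h}|$ and $N-N_{gh}=|I_g^c\setminus I_{g,h}|+|I_h^c\setminus I_{g,h}|$, which is exactly the complement form of your decomposition $I_{gh}=(I_g\cap I_h)\sqcup I_{g,h}$ and likewise yields the difference $2|I_{g,h}|$. Your version merely spells out more explicitly which inclusions are automatic and which one uses the spanning hypothesis, which the paper leaves implicit.
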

\begin{proof}
First of all, note that $N - N_g = |I_g^c|$ --- the number of elements in the set $I_g^c$.
Therefore, the following equalities yield the statement:
\begin{align*}
&N - N_{g} = |I_g^c\backslash I_{g,h}| + |I_{g,h}|, \quad N - N_{h} = |I_h^c\backslash I_{g,h}| + |I_{g,h}|,\\
&N - N_{gh} = |I_{gh}^c|=|I_g^c\backslash I_{g,h}| +  |I_h^c\backslash I_{g,h}|.
\end{align*}
\end{proof}
For each $g\in G_f$, the set $I_g\subseteq \{1,\dots, N\}$ and its complement $I_g^c$ 
will often be regarded as a subsequence of $(1,\dots, N)$:
\begin{equation}
I_g=(i_1,\dots, i_{N_g}), \ i_1<\dots <i_{N_g},
\quad I_g^c=(j_1,\dots, j_{N-N_g}),\ j_1<\dots <j_{N-N_g}.
\end{equation}
\begin{definition}\label{def: epsilon}
Let $g_1,\dots, g_k$ be elements of $G_f$ such that $I_{g_i,g_j}= \emptyset$ if $i\ne j$.
\begin{enumerate}
\item
Denote by $I_{g_1}^c \sqcup I_{g_2}^c$ the sequence given by adding the sequence $I_{g_2}^c$ at the end of 
the sequence $I_{g_1}^c$.
Define inductively $I_{g_1}^c \sqcup \dots \sqcup I_{g_k}^c$ by $\left(I_{g_1}^c \sqcup \dots \sqcup I_{g_{k-1}}^c\right)\sqcup I_{g_k}^c$.Obviously, as a set, $I_{g_1}^c \sqcup\dots\sqcup I_{g_k}^c=I_{g_1\dots g_k}^c$.
\item
Let $\sigma_{g_1,\dots, g_k}$ be the permutation which turns the sequence $I_{g_1}^c \sqcup\dots\sqcup I_{g_k}^c$ to the sequence $I_{g_1\dots g_k}^c$.
Define $\widetilde{\varepsilon}_{g_1,\dots,g_k}$ as the signature ${\rm sgn}(\sigma_{g_1,\dots, g_k})$ 
of the permutation $\sigma_{g_1,\dots, g_k}$.
\end{enumerate}
\end{definition}
It is straightforward from the definition that 
\begin{subequations}
\begin{eqnarray}
&\widetilde{\varepsilon}_{g,\id}=1=\widetilde{\varepsilon}_{\id,g},\ &g\in G_f,\\
&\widetilde{\varepsilon}_{g,h}=(-1)^{(N-N_g)(N-N_h)}\widetilde{\varepsilon}_{h,g},\ &g,h\in G_f,\ I_{g,h}=\emptyset,\\
&\widetilde{\varepsilon}_{g,g'}\widetilde{\varepsilon}_{gg',g''}=\widetilde{\varepsilon}_{g,g',g''}=
\widetilde{\varepsilon}_{g,g'g''}\widetilde{\varepsilon}_{g',g''},\ &
g,g',g''\in G_f,\ I_{g,g'}=I_{g',g''}=I_{g,g''}=\emptyset.
\end{eqnarray}
\end{subequations}
\subsection{Uniqueness}
Throughout this subsection, $f=f(x_1,\dots, x_N)$ denotes an invertible polynomial.
In this subsection, we shall show that for any $G\subseteq G_f$ the axioms in Definition~\ref{axioms} determine uniquely $\Jac'(f,G)$ up to isomorphism.
We only have to show that for $g,h\in G$ the product $\circ:\Jac'(f,g)\otimes_\CC \Jac'(f,h)\longrightarrow \Jac'(f,gh)$ 
is uniquely determined up to rescaling of generators of $\Jac(f^g)$-modules $\Jac'(f,g)$.

Take a nowhere vanishing $N$-form $dx_1\wedge \dots\wedge dx_N$
and set $\zeta:=[dx_1\wedge \dots\wedge dx_N]\in\Omega_f$.
For each subgroup $G\subseteq G_f$, fix an isomorphism in axiom~\eqref{axiom_=JacOmega}
\begin{equation}
\vdash:\Jac'(f,G)\stackrel{\cong}{\longrightarrow}  \Omega'_{f,G},\quad X\mapsto X\vdash\zeta,
\end{equation}
where $\zeta$ is considered as an element in $\Omega'_{f,\id}=\Omega_f$ (recall Definition~\ref{def_Omega'}). 
Fix also a map 
\begin{equation}
\alpha:G_f\longrightarrow \CC^\ast,\quad g\mapsto \alpha_g,
\end{equation}
such that $\alpha_\id=1$ and 
\begin{equation}
\alpha_g\alpha_{g^{-1}}=(-1)^{\frac{1}{2}(N-N_g)(N-N_g+1)},\quad g\in G_f.
\end{equation} 
Such a map $\alpha$ always exists since for each $g$ we may choose $\alpha_g$ as 
\begin{equation}
\alpha_g=\epi\left[\frac{1}{8}(N-N_g)(N-N_g+1)\right].
\end{equation}
For each $g\in G$, let $v_g$ be an element of $\Jac'(f,g)$, such that 
\begin{equation}\label{eq:e_g}
v_g \vdash \zeta = \alpha_g\omega_g,
\end{equation}
where $\omega_g\in\Omega'_{f,g}$ is the residue class of $\widetilde \omega_g\in \Omega^{N_g}(\Fix(g))$ and
\begin{equation}
\widetilde\omega_g:=
\begin{cases}
dx_{i_1}\wedge \dots \wedge dx_{i_{N_g}} & \text{if } I_g=(i_1,\dots, i_{N_g}),\  i_1<\dots <i_{N_g}\\
1_g & \text{if } I_g=\emptyset
\end{cases}.
\end{equation}
Obviously, we have $\omega_{\id}=\zeta$.
\begin{remark}
It might not be necessary to distinguish $\zeta$ and $\omega_\id$, however, we regard $\zeta$ as a ``primitive form" 
(cf. \cite{S1,S2,ST}) 
at the origin of the base space of the ``properly-defined deformation space" of the pair $(f,G)$ 
while we hold $\omega_\id$ as just a $\Jac'(f,\id)$-basis of $\Omega'_{f,\id}$.
\end{remark}

By axiom~\eqref{axiom_vs}, we have $\Jac'(f,\id)=\Jac(f)$. 
Therefore, $v_\id=[1]$ and $v_{\id}\circ v_g=v_g\circ v_{\id}=v_g$ by axioms~\eqref{axiom_algebra} and~\eqref{axiom_idmodule}.
Axiom~\eqref{axiom_idmodule} implies that for all $Y\in\Jac'(f,g)$ there exists $X\in\Jac'(f,\id)=\Jac(f)$ 
represented by a polynomial in $\{x_i\}_{i\in I_g}$ such that $Y=X\circ v_g$.
For any $X\in\Jac'(f,\id)$, we shall often write $X\circ v_g$ as $X|_{\Fix(g)}v_g$ 
where $X|_{\Fix(g)}$ is the image of $X$ under the map $\Jac(f)\longrightarrow \Jac(f^g)$.

\begin{proposition}\label{prop: sum of the fixes}
For a pair $(g,h)$ of elements of $G$ which is not spanning, we have $v_g \circ v_h = 0 \in \Jac'(f,G)$.
\end{proposition}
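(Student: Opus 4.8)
The plan is to exploit the pairing axiom~\eqref{axiom_J_fG} together with the non-degeneracy of $J_{f,G}$ to reduce the vanishing of $v_g\circ v_h$ to a residue computation that fails to have full support when $(g,h)$ is not spanning. Since $v_g\circ v_h\in\Jac'(f,gh)$ by axiom~\eqref{axiom_algebra}, and $\Omega'_{f,G}$ is a free $\Jac'(f,G)$-module of rank one via axiom~\eqref{axiom_isom}, it suffices to show that $(v_g\circ v_h)\vdash\zeta=0$ in $\Omega'_{f,gh}=\Omega_{f^{gh}}$. By non-degeneracy of the perfect pairing $J_{f,gh}$ between $\Omega'_{f,gh}$ and $\Omega'_{f,(gh)^{-1}}$, it is enough to verify that
\begin{equation}
J_{f,G}\bigl((v_g\circ v_h)\vdash\zeta,\ \omega'\bigr)=0
\end{equation}
for every $\omega'\in\Omega'_{f,(gh)^{-1}}$.

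First I would rewrite the product action on $\zeta$ as an iterated module action. Using axiom~\eqref{axiom_J_fG} to move the factor $v_g$ across the pairing, I expect to obtain something of the form
\begin{equation}
J_{f,G}\bigl(v_g\vdash(v_h\vdash\zeta),\ \omega'\bigr)=\pm\,
J_{f,G}\bigl(v_h\vdash\zeta,\ ((g^{-1})^\ast v_g)\vdash\omega'\bigr),
\end{equation}
where the module-action compatibility of axiom~\eqref{axiom_idmodule} guarantees that $v_h\vdash\zeta$ lands in $\Omega'_{f,h}$ and that $((g^{-1})^\ast v_g)\vdash\omega'$ lands in $\Omega'_{f,g^{-1}(gh)^{-1}}=\Omega'_{f,h^{-1}}$. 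Thus the pairing is computed by the summand $J_{f,h}$, an honest residue over $\Fix(h)$.

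The heart of the matter is then the geometric meaning of ``not spanning.'' When $(g,h)$ fails to be spanning, $I_g\cup I_h\cup I_{gh}\subsetneq\{1,\dots,N\}$, so there is a coordinate $x_j$ lying outside all three fixed loci; equivalently $g$, $h$, and $gh$ all act nontrivially in the $j$-th direction. The plan is to track how the top-form $v_h\vdash\zeta$ on $\Fix(h)$ and the transported form $((g^{-1})^\ast v_g)\vdash\omega'$ interact on $\Fix(h)$, using the explicit form~\eqref{eq:e_g} of the generators together with Corollary~\ref{corollary: Jac module on omega fg}. Because $v_g$ is represented by a polynomial in the variables $\{x_i\}_{i\in I_g}$ but the residue is taken over the coordinates of $\Fix(h)$, the missing direction $x_j$ forces either a dimension mismatch of the differential forms (the wedge of $N_h$ differentials cannot be matched) or the appearance of a variable that restricts to zero on the relevant locus, making the residue vanish identically. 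I would make this precise by choosing coordinates as in~\eqref{eq: gh explicit form} so that the index set $I_{g,h}=I_g^c\cap I_h^c$ is displayed explicitly, and then observe that a non-spanning pair is exactly the situation not covered by the factorization in Definition~\ref{definition: primitive decomposition}(3).

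The main obstacle I anticipate is bookkeeping the interplay between the group action $(g^{-1})^\ast$ on forms and the support of the residue over $\Fix(h)$: one must confirm that the transported generator genuinely lies in the correct twisted sector $\Omega'_{f,h^{-1}}$ and that the combined differential form, when written over the coordinates of $\Fix(h)$, either has the wrong form-degree or carries a factor $x_j$ that kills the residue. Once this vanishing is established for all $\omega'$, non-degeneracy of $J_{f,gh}$ forces $(v_g\circ v_h)\vdash\zeta=0$, and the isomorphism of axiom~\eqref{axiom_isom} then yields $v_g\circ v_h=0$, completing the proof.
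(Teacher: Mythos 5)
Your overall strategy --- reduce $v_g\circ v_h=0$ to the vanishing of $J_{f,G}\bigl((v_g\circ v_h)\vdash\zeta,\omega'\bigr)$ for all $\omega'\in\Omega'_{f,(gh)^{-1}}$ and then evaluate the residue --- has a genuine gap, and it is circular at the decisive step. After you move $v_g$ across the pairing via axiom~\eqref{axiom_J_fG}, the right-hand side contains the term $((h^{-1})^\ast v_g)\vdash\omega'$, i.e.\ (up to a scalar, since $G$ acts diagonally) the module action of $v_g$ on the sector $\Omega'_{f,(gh)^{-1}}$. By the freeness of $\Omega'_{f,G}$ over $\Jac'(f,G)$ this action is governed by the product $v_g\circ Y$ for $Y\in\Jac'(f,(gh)^{-1})$, and one checks directly that the pair $(g,(gh)^{-1})$ is spanning if and only if $(g,h)$ is, since $I_{g}\cup I_{(gh)^{-1}}\cup I_{g(gh)^{-1}}=I_g\cup I_{gh}\cup I_h$. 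So your reduction trades one unknown non-spanning product for another of exactly the same kind; the pairing axiom only yields a consistency relation between the two structure constants, not their vanishing. Relatedly, the phrase ``$v_g$ is represented by a polynomial in the variables $\{x_i\}_{i\in I_g}$'' is a misreading of the setup: only the \emph{coefficients} in $\Jac'(f,g)=\Jac(f^g)\,v_g$ are polynomials, while $v_g$ itself is an abstract generator whose action on other twisted sectors is precisely the unknown being axiomatized. For the same reason there is never a ``dimension mismatch'': axiom~\eqref{axiom_idmodule} forces $((h^{-1})^\ast v_g)\vdash\omega'$ to be an honest $N_h$-form in $\Omega'_{f,h^{-1}}$, and the pairing $J_{f,h}$ is perfect, so one cannot conclude vanishing on degree grounds.

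The ingredient you are missing is axiom~\eqref{axiom_ringauto}, the $\Aut(f,G)$-equivariance of the product, which is what the paper uses. Writing $v_g\circ v_h=[\gamma'_{g,h}]v_{gh}$ and decomposing $f$ into chain and loop summands, one picks a diagonal automorphism $\varphi$ supported on the variables of the summand containing a coordinate $x_k$ with $k\notin I_g\cup I_h\cup I_{gh}$ (an element of $G_{f_\nu}$ scaling $x_1$ by $\epi[1/a_1]$ in the chain case, or an element of $G_{f_\nu}\setminus G_{f_\nu}^{\SL}$ in the loop case). Such a $\varphi$ rescales $\zeta$ by $\lambda_\varphi\ne 1$ while fixing $\omega_g,\omega_h,\omega_{gh}$, so $\varphi^\ast(v_{g'})=\lambda_\varphi^{-1}v_{g'}$ for $g'\in\{g,h,gh\}$; equivariance then forces $\varphi^\ast([\gamma'_{g,h}])=\lambda_\varphi^{-1}[\gamma'_{g,h}]$, while $\varphi$ acts trivially on $\Jac(f^{gh})$ because $x_k\notin I_{gh}$. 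Hence $[\gamma'_{g,h}]=0$. Without some such symmetry argument the residue computation you sketch cannot be carried out.
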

\begin{proof}
Denote by $[\gamma'_{g,h}(\bx)]$ the element of $\Jac(f^{gh})$ satisfying $v_g \circ v_h = [\gamma'_{g,h}(\bx)] v_{gh}$.
Suppose that $f=f_1\oplus\cdots\oplus f_p$ is a Thom--Sebastiani sum 
such that each $f_\nu$, $\nu=1,\dots, p$ is either of chain type or loop type. 
Without loss of generality, we may assume the coordinate $x_k$, $k\notin I_g \cup I_h \cup I_{gh}$ 
to be a variable of the polynomial $f_1$. Consider the following two cases;
\begin{itemize}
\item[(a)] $f_1=x_1^{a_1}x_2+x_2^{a_2}x_3+\dots+x_{m-1}^{a_{m-1}}x_m+x_m^{a_m}$ is of chain type.
\item[(b)] $f_1=x_1^{a_1}x_2+x_2^{a_2}x_3+\dots+x_{m-1}^{a_{m-1}}x_m+x_m^{a_m}x_1$ is of loop type. 
\end{itemize}

Case (a): 
First, note that $1\notin I_g \cup I_h \cup I_{gh}$.
Consider $(\frac{1}{a_1},0\dots,0)\in \Aut(f_1,G)$ and extend it naturally to the element $\varphi \in \Aut(f,G)$. 
Since $1\notin I_g \cup I_h \cup I_{gh}$, we have $\varphi^*(v_{g'}) = \epi\left[-\frac{1}{a_1}\right] v_{g'}$ for $g'\in\{g,h,gh\}$. 
Axiom~\eqref{axiom_ringauto} yields $\varphi^*([\gamma'_{g,h}(\bx)]) =  \epi\left[-\frac{1}{a_1}\right] [\gamma'_{g,h}(\bx)]$. 
On the other hand, we have $\varphi^*([\gamma'_{g,h}(\bx)]) = [\gamma'_{g,h}(\bx)]$ since $1\notin I_{gh}$.
Hence, $[\gamma'_{g,h}(\bx)]=0$.

Case (b): 
First, note that $1,\dots, m\notin  I_g \cup I_h \cup I_{gh}$. 
Choose an element of $G_{f_1}\backslash G_{f_1}^\SL$, which exists due to Proposition~\ref{prop_GfSL_proper}, 
and extend it naturally to the element $\varphi \in \Aut(f,G)$. 
There exists a complex number $\lambda_\varphi \neq 1$, the determinant of $\varphi$ regarded as an element of ${\rm GL}(N;\CC)$, 
such that $\varphi^*(v_{g'}) = \lambda_\varphi^{-1} v_{g'}$ for $g'\in\{g,h,gh\}$ since $1,\dots, m\notin  I_g \cup I_h \cup I_{gh}$. 
Axiom~\ref{axiom_ringauto} yields $\varphi^*([\gamma'_{g,h}(\bx)]) = \lambda_\varphi^{-1} [\gamma'_{g,h}(\bx)]$. 
On the other hand, we have $\varphi^*([\gamma'_{g,h}(\bx)]) = [\gamma'_{g,h}(\bx)]$ since $1,\dots, m\notin I_{gh}$. 
Hence, $[\gamma'_{g,h}(\bx)]=0$.
\end{proof}

We consider the product $v_g\circ v_h$ for a spanning pair $(g,h)$.
\begin{proposition}\label{proposition_c_{g,h}}
For each spanning pair $(g,h)$ of elements of $G$, there exists $c_{g,h}\in\CC$ such that 
\begin{equation}
v_g\circ v_h = c_{g,h}[H_{g,h}]v_{gh}.
\end{equation}
Moreover, $c_{g,h}$ does not depend on the choice of the subgroup $G$ of $G_f$ containing $g,h$.
\end{proposition}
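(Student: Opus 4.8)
The plan is to fix on $\Jac'(f,gh)$ the free rank-one $\Jac(f^{gh})$-module structure coming from axioms~\eqref{axiom_idmodule} and~\eqref{axiom_isom}, so that $v_{gh}$ is a generator, and to write $v_g\circ v_h=[\gamma'_{g,h}]\,v_{gh}$ for a unique class $[\gamma'_{g,h}]\in\Jac(f^{gh})$ (this is legitimate since $v_g\circ v_h\in\Jac'(f,gh)$ by axiom~\eqref{axiom_algebra}). The content is to identify $[\gamma'_{g,h}]$ with a scalar multiple of $[H_{g,h}]$, and I would do this by imposing on $[\gamma'_{g,h}]$ two constraints that $[H_{g,h}]$ also satisfies, and then showing that they cut out a one-dimensional space.

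The first constraint is an annihilation property. For $j\in I_{g,h}=I_g^c\cap I_h^c$ we have $j\in I_g^c$, so the image of $x_j$ in $\Jac(f^g)$ vanishes and hence $[x_j]\circ v_g=0$ by axiom~\eqref{axiom_idmodule}. Associativity of $\circ$ (axiom~\eqref{axiom_algebra}) then gives
\[
[x_j\gamma'_{g,h}]\,v_{gh}=[x_j]\circ(v_g\circ v_h)=([x_j]\circ v_g)\circ v_h=0,
\]
so $[x_j\gamma'_{g,h}]=0$ in $\Jac(f^{gh})$ for every $j\in I_{g,h}$; this is exactly property (iii) of Proposition~\ref{proposition: H_gh is a jac element} enjoyed by $[H_{g,h}]$. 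The second constraint is equivariance. Any diagonal $\varphi=\mathrm{diag}(\lambda_1,\dots,\lambda_N)\in G_f$ lies in $\Aut(f,G)$ since $G$ is abelian and diagonal, and from $v_g\vdash\zeta=\alpha_g\omega_g$, $\varphi^\ast\zeta=(\prod_i\lambda_i)\zeta$ and $\varphi^\ast\omega_g=(\prod_{i\in I_g}\lambda_i)\omega_g$ one computes $\varphi^\ast(v_g)=(\prod_{i\in I_g^c}\lambda_i)^{-1}v_g$. Feeding this into axiom~\eqref{axiom_ringauto} applied to $v_g\circ v_h=[\gamma'_{g,h}]v_{gh}$, and using $I_{gh}^c=(I_g^c\setminus I_{g,h})\sqcup(I_h^c\setminus I_{g,h})$, all scalars outside $I_{g,h}$ telescope and one gets
\[
\varphi^\ast([\gamma'_{g,h}])=\Big(\prod_{i\in I_{g,h}}\lambda_i\Big)^{-2}[\gamma'_{g,h}].
\]
Because $\varphi$ fixes $f$, the Hessian entries satisfy $\varphi^\ast(\partial^2 f/\partial x_i\partial x_j)=\lambda_i^{-1}\lambda_j^{-1}\,\partial^2 f/\partial x_i\partial x_j$, so the minor defining $H_{g,h}$ transforms by the same character; thus $[\gamma'_{g,h}]$ and $[H_{g,h}]$ lie in one and the same $G_f$-eigenspace of $\Jac(f^{gh})$.

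It remains to prove that the subspace of $\Jac(f^{gh})$ satisfying both constraints is one-dimensional; this is the main obstacle, and the reason for the Thom--Sebastiani bookkeeping set up before the statement. Writing $\Jac(f^{gh})=\bigotimes_\nu\Jac(f_\nu)$ and using the trichotomy of Lemma~\ref{lemma_cases}, I would check the two constraints factorwise. On summands with $I_\nu\subseteq I_g\cap I_h$ the character is trivial and there is no annihilation condition, so one needs $(\Jac(f_\nu))^{G_{f_\nu}}=\CC\cdot[1]$ (note $G_f=\prod_\nu G_{f_\nu}$ for a Thom--Sebastiani sum). On the summands in $\Gamma_a$ the annihilation by all variables forces the one-dimensional socle, spanned by $\hess(f_\nu)=\widetilde H_a^{(\nu)}$. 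The delicate case is $\Gamma_b$: for a chain $f_\nu$ with $\{i_1,\dots,i_l\}\subseteq I_{g,h}$ the partial annihilation together with the character singles out the monomial $x_{i_1}^{a_1-2}x_{i_2}^{a_2-1}\cdots x_{i_l}^{a_l-1}x_{i_{l+1}}$, i.e.\ $\widetilde H_b^{(\nu)}$ up to a scalar. Each of these is a computation with the monomial basis of Proposition~\ref{monomial basis}, the $\Gamma_b$ case being the one demanding care, and---since character eigenspaces are genuinely not multiplicity-free in general---the reason both constraints are needed. Multiplying the factors reproduces $[H_{g,h}]$ up to scalar and yields $[\gamma'_{g,h}]=c_{g,h}[H_{g,h}]$.

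Finally, independence of $G$ I would deduce from axiom~\eqref{axiom: G-H}. For $G\subseteq G'$ the induced algebra homomorphism $\Jac'(f,G)\to\Jac'(f,G')$ is defined by the identity maps on each $\Omega'_{f,g}$ using the same $\zeta$, and since $\alpha_g$ depends only on $N_g$ it carries $v_g\mapsto v_g$ and $v_{gh}\mapsto v_{gh}$. Being an algebra homomorphism compatible with the $\Jac(f)$-action, it sends $c_{g,h}^{G}[H_{g,h}]v_{gh}=v_g\circ v_h\mapsto v_g\circ v_h=c_{g,h}^{G'}[H_{g,h}]v_{gh}$. As $[H_{g,h}]\neq 0$ in $\Jac(f^{gh})$ by Proposition~\ref{proposition: H_gh is a jac element}(i), the vector $[H_{g,h}]v_{gh}$ is nonzero, whence $c_{g,h}^{G}=c_{g,h}^{G'}$; taking $G'=G_f$ shows the value is independent of the chosen $G$.
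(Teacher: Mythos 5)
Your argument is correct and its engine is the same as the paper's: decompose $f$ into chain and loop pieces, classify them via Lemma~\ref{lemma_cases}, and pin down the class $[\gamma'_{g,h}]$ factor by factor using equivariance under diagonal automorphisms (axiom~\eqref{axiom_ringauto}) together with the monomial basis of Proposition~\ref{monomial basis}. The paper's Lemma~\ref{lemma: product explicitly} carries out exactly this eigenvalue analysis, using the explicit generators $\varphi_r\in G_{f_\nu}\subseteq\Aut(f,G)$ for chains and all of $G_{f_\nu}$ for loops; your reformulation of the endgame as ``the subspace cut out by the character and the annihilation conditions is one-dimensional'' is the same computation packaged more abstractly. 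The one genuinely new ingredient you add is the constraint $[x_j\gamma'_{g,h}]=0$ for $j\in I_{g,h}$, derived from associativity and $[x_j]\circ v_g=0$; this is a clean observation (it mirrors Proposition~\ref{proposition: H_gh is a jac element}\,(iii)), though the character constraint alone already suffices in the $\Gamma_b$ case, which is how the paper proceeds. Two points to tighten. First, your trichotomy of summands omits the cases (i)(a)/(a') of Lemma~\ref{lemma_cases}, where only $\{i_{l+1},\dots,i_m\}\subseteq I_g\cap I_h$ survives into $I_{gh}$: the corresponding tensor factor of $\Jac(f^{gh})$ is the Jacobian algebra of a \emph{truncated} chain, and you need its invariants under the image of $G_{f_\nu}$ to reduce to constants — the same monomial-basis check, but not literally covered by your clause ``$I_\nu\subseteq I_g\cap I_h$''. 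Second, the multiplicity-one assertions (invariants of a chain or loop Jacobian algebra are the constants; the character of $H_{g,h}$ occurs with multiplicity one in the relevant factor) are stated rather than verified; they do hold — for chains the basis monomials have pairwise distinct $G_{f_\nu}$-characters by a telescoping lattice argument on $E^{T}\ZZ^m$, while for loops eigenspaces can have higher multiplicity and one must check the specific characters that occur — and this is the same level of detail the paper itself supplies. Your deduction of the independence of $c_{g,h}$ from $G$ via axiom~\eqref{axiom: G-H} and the nonvanishing of $[H_{g,h}]$ coincides with the paper's.
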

\begin{proof}
We only need to show the first statement since the second one follows from it together with 
axiom~\eqref{axiom: G-H}, the definition of $v_g$ in \eqref{eq:e_g} and the independence of $H_{g,h}$ from 
a particular choice of $G$.
Based on Lemma~\ref{lemma_cases}, we study which variable in $f_\nu$ can appear in the product structure. 
\begin{lemma}\label{lemma: product explicitly}
Let the notation and the cases be as in Lemma \ref{lemma_cases} above.
There is a polynomial $\gamma_{g,h}(\bx)\in\CC[x_1,\dots, x_N]$ which doesn't depend on $x_{i_1},\dots,x_{i_m}$ such that 
the one of the following holds:
\begin{enumerate}
\item[(i)]
\begin{itemize}
\item[(a)] 
$v_g \circ v_h = [\gamma_{g,h}(\bx)]v_{gh}$.
\item[(b)] 
$v_g \circ v_h = 
\begin{cases}
\left[\gamma_{g,h}(\bx)\cdot\left(x_{i_1}^{a_{i_1}-2} x_{i_2}^{a_{i_2}-1}\cdots x_{i_m}^{a_{i_m}-1}\right)\right] v_{gh} & \text{if }\ l=m\\
\left[\gamma_{g,h}(\bx)\cdot\left( x_{i_1}^{a_{i_1}-2} x_{i_2}^{a_{i_2}-1}\cdots x_{i_l}^{a_{i_l}-1}x_{i_{l+1}}\right) \right] v_{gh} & \text{if }\ l < m
\end{cases},
$
\end{itemize}
\item[(ii)]
\begin{itemize}
\item[(a)]
$v_g \circ v_h =  [\gamma_{g,h}(\bx)]v_{gh}$.
\item[(b)]
$v_g \circ v_h =  [\gamma_{g,h}(\bx)]v_{gh}$.
\item[(c)]
$v_g \circ v_h =  \left[\gamma_{g,h}(\bx)\cdot \left(x_{i_1}^{a_{i_1}-1} x_{i_2}^{a_{i_2}-1}\cdots x_{i_m}^{a_{i_m}-1}\right)\right] v_{gh}$.
\end{itemize}
\end{enumerate}   
Here, we denote by $[\gamma_{g,h}(\bx)]$ the class of $\gamma_{g,h}(\bx)$ in $\Jac(f^{gh})$.
\end{lemma}
\begin{proof}
(i): We may assume $f_\nu=x_1^{a_1}x_2+x_2^{a_2}x_3+\dots+x_m^{a_m}$.
For each $r=1,\dots, m$, there is a unique element $\varphi_r\in \Aut(f_\nu,G)$ 
such that $\varphi_r(x_i)=x_i$ for all $i=r+1,\dots, m$, which is explicitly given by
\begin{align*}
\varphi_r(x_{r}) &:= \epi \left[ \frac{1}{a_r}\right] x_{r},\\
\varphi_r(x_{i}) & := \epi\left[\frac{1}{a_{i}} \left(1 - \frac{1}{a_{i+1}} 
\left( 1 - \dots - \frac{1}{a_{r-1}}\left( 1- \frac{1}{a_r}\right)\right) \right)\right]x_i, \ 1 \le i < r.
\end{align*}
Denote also by $\varphi_p$ its natural extension to $\Aut(f,G)$ and 
by $\lambda_{\varphi_r}\in\CC^\ast$ the determinant of $\varphi_r$ regarded as an element of ${\rm GL}(N;\CC)$.
\begin{itemize}
\item[(a)] 
For each $r=1,\dots, m$, we have $\varphi_r^*(v_g) = v_g$, $\varphi_r^*(v_h) = \lambda_{\varphi_r}^{-1} v_h$ 
and $\varphi_r^*(v_{gh}) = \lambda_{\varphi_r}^{-1} v_{gh}$.
Suppose that a polynomial $\gamma_{g,h}(\bx)\in\CC[x_1,\dots, x_N]$ satisfies $v_g\circ v_h =[\gamma_{g,h}(\bx)]v_{gh}$. 
By axiom~\eqref{axiom_ringauto}, we obtain 
\begin{align*}
[\varphi_r^*(\gamma_{g,h}(\bx))]v_{gh} &=\lambda_{\varphi_r}\varphi_r^*([\gamma_{g,h}(\bx)]v_{gh})
=\lambda_{\varphi_r}\varphi_r^*(v_g\circ v_h)\\
&=\lambda_{\varphi_r}\varphi_r^*(v_g)\circ \varphi_r^*(v_h)=v_g\circ v_h=[\gamma_{g,h}(\bx)]v_{gh},
\end{align*}
and hence $\varphi_r^*([\gamma_{g,h}(\bx)])=[\gamma_{g,h}(\bx)]$ in $\Jac(f^{gh})$. 
In view of the above action of $\varphi_r$ and Proposition~\ref{monomial basis}, 
the polynomial $\gamma_{g,h}(\bx)$ can be chosen so that it does not depend on $x_i$, $i=1,\dots, m$. 
\item[(b)]
For each $r=1,\dots, m$, we have $\varphi_r^*(v_g) = \lambda_{\varphi_r}^{-1} v_g$, $\varphi_r^*(v_h) = \lambda_{\varphi_r}^{-1} v_h$ and $\varphi_r^*(v_{gh}) = v_{gh}$.
Suppose that a polynomial $\gamma'_{g,h}(\bx)\in\CC[x_1,\dots, x_N]$ satisfies $v_g\circ v_h =[\gamma'_{g,h}(\bx)]v_{gh}$. 
By axiom~\eqref{axiom_ringauto}, we obtain 
\begin{align*}
[\varphi_r^*(\gamma'_{g,h}(\bx))]v_{gh} &=\varphi_r^*([\gamma'_{g,h}(\bx)]v_{gh})
=\varphi_r^*(v_g\circ v_h)\\
&=\varphi_r^*(v_g)\circ \varphi_r^*(v_h)=\lambda_{\varphi_r}^{-2}(v_g\circ v_h)=\lambda_{\varphi_r}^{-2}[\gamma'_{g,h}(\bx)]v_{gh},
\end{align*}
and hence $[\varphi_r^*(\gamma'_{g,h}(\bx))]=\lambda_{\varphi_r}^{-2}[\gamma'_{g,h}(\bx)]$ in $\Jac(f^{gh})$.  
In view of the above action of $\varphi_r$ and Proposition~\ref{monomial basis}, 
the polynomial $\gamma'_{g,h}(\bx)$ can be chosen so that
it is divisible by $x_{1}^{a_{1}-2} x_{2}^{a_{2}-1}\cdots x_{m}^{a_m-1}$
if $l=m$ and by $x_{1}^{a_{1}-2} x_{2}^{a_{2}-1}\cdots x_{l}^{a_l-1}x_{l+1}$ if $l<m$. 
\end{itemize}

(ii):
We may assume $f_\nu=x_1^{a_1}x_2+x_2^{a_2}x_3+\dots+x_m^{a_m}x_1$.
For each element $\varphi\in G_{f_\nu}$ regarded as an element of $\Aut(f_\nu,G)$, 
denote also by $\varphi$ its natural extension to $\Aut(f,G)$.
Let $\lambda_\varphi\in\CC^\ast$ be the determinant of $\varphi$ regarded as an element of ${\rm GL}(N;\CC)$.
Note that if $\varphi\ne \id$ then $\varphi(x_i)\ne x_i$ for all $i=1,\dots, m$.
\begin{itemize}
\item[(a)]
For all $\varphi\in G_{f_\nu}$, we have $\varphi^*(v_g) = v_g$, $\varphi^*(v_h) = v_h$ and $\varphi^*(v_{gh}) = v_{gh}$.
Suppose that a polynomial $\gamma_{g,h}(\bx)\in\CC[x_1,\dots, x_N]$ satisfies $v_g\circ v_h =[\gamma_{g,h}(\bx)]v_{gh}$. 
By axiom~\eqref{axiom_ringauto}, we obtain 
\begin{align*}
[\varphi^*(\gamma_{g,h}(\bx))]v_{gh}&=\varphi^*(\gamma_{g,h}(\bx) v_{gh})=\varphi^*(v_g\circ v_h)\\
&= \varphi^*(v_g)\circ \varphi^*(v_h)=v_g\circ v_h =[\gamma_{g,h}(\bx)]v_{gh},
\end{align*}
and hence $[\varphi^*(\gamma_{g,h}(\bx))]=[\gamma_{g,h}(\bx)]$ in $\Jac(f^{gh})$. 
In view of Proposition~\ref{monomial basis}, 
the polynomial $\gamma_{g,h}(\bx)$ can be chosen so that it does not depend on $x_i$, $i=1,\dots, m$. 
\item[(b)]
Suppose that a polynomial $\gamma_{g,h}(\bx)\in\CC[x_1,\dots, x_N]$ satisfies $v_g\circ v_h =[\gamma_{g,h}(\bx)]v_{gh}$. 
Since $1,\dots, m$ do not belong to $I_g\cap I_h$ nor $I_{g,h}$, it is obvious that 
the polynomial $\gamma_{g,h}(\bx)$ can be chosen so that it does not depend on $x_i$, $i=1,\dots, m$. 
\item[(c)]
For all $\varphi\in G_{f_\nu}$, we have $\varphi^*(v_g) = \lambda_\varphi^{-1}v_g$, $\varphi^*(v_h) = \lambda_\varphi^{-1}v_h$ and $\varphi^*(v_{gh}) = v_{gh}$.
Suppose that a polynomial $\gamma'_{g,h}(\bx)\in\CC[x_1,\dots, x_N]$ satisfies $v_g\circ v_h =[\gamma'_{g,h}(\bx)]v_{gh}$. 
By axiom~\eqref{axiom_ringauto}, we obtain 
\begin{align*}
[\varphi^*(\gamma'_{g,h}(\bx))]v_{gh}&=\varphi^*(\gamma'_{g,h}(\bx) v_{gh})=\varphi^*(v_g\circ v_h)\\
&= \varphi^*(v_g)\circ \varphi^*(v_h)=\lambda_\varphi^{-2}(v_g\circ v_h) =\lambda_\varphi^{-2}[\gamma'_{g,h}(\bx)]v_{gh},
\end{align*}
and hence $[\varphi^*(\gamma'_{g,h}(\bx))]=\lambda_\varphi^{-2}[\gamma'_{g,h}(\bx)]$ in $\Jac(f^{gh})$. 
In view of Proposition~\ref{monomial basis}, the polynomial $\gamma'_{g,h}(\bx)$ can be chosen so that 
it is divisible by $x_1^{a_1-1} x_{2}^{a_{2}-1}\cdots x_{m}^{a_m-1}$. 
\end{itemize}
\end{proof}
Now the first statement of the proposition is a direct consequence of Lemma~\ref{lemma: product explicitly}, 
since $H_{g,h}$ is a constant multiple of the product of the monomials in the round brackets there.
We have finished the proof of the proposition.
\end{proof}

By Proposition~\ref{proposition_c_{g,h}}, we may assume that $G=G_f$.
We give some properties of $c_{g,h}$.
\begin{lemma}\label{lemma_gg^-1}
For each $g \in G_f$, we have 
\begin{equation}
c_{g,g^{-1}} = (-1)^{\frac{1}{2}(N-N_g)(N-N_g-1)}\cdot \epi\left[-\frac{1}{2}\age(g)\right].
\end{equation}
\end{lemma}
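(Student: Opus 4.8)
The plan is to compute $c_{g,g^{-1}}$ by pairing the product $v_g \circ v_{g^{-1}}$ against a suitable test element using the orbifold residue pairing $J_{f,G}$ and extracting the constant from the normalizations already fixed. Since $(g,g^{-1})$ is always a spanning pair with $gh = \id$, Proposition~\ref{proposition_c_{g,h}} gives $v_g \circ v_{g^{-1}} = c_{g,g^{-1}}[H_{g,g^{-1}}]v_\id$, and by the remark following Definition~\ref{definition: Hgh} we have $[H_{g,g^{-1}}] = \frac{1}{\mu_f}[\hess(f)]$. So the strategy reduces to evaluating both sides after applying $J_{f,G}$ against $v_\id$ (or equivalently $\zeta$), where the left-hand side is controlled by the Frobenius property in axiom~\eqref{axiom_J_fG} and the right-hand side is controlled by the explicit value of the residue pairing $J_{f,\id}$ on $\Omega_f$ computed in the Proposition after \eqref{eq:isom}.

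The key steps, in order, are as follows. First I would write $J_{f,G}(v_g \circ v_{g^{-1}} \vdash \zeta, \zeta)$ in two ways. Using the product formula above together with $v_\id \vdash \zeta = \zeta$, the module structure of axiom~\eqref{axiom_idmodule}, and the classical residue computation, the right-hand side becomes $c_{g,g^{-1}} \cdot J_{f,\id}([H_{g,g^{-1}}]\zeta, \zeta)$, which by the normalization $[H_{g,g^{-1}}]=\frac{1}{\mu_f}[\hess(f)]$ and the formula $J_f([dx],[\hess(f)dx]) = \mu_f$ evaluates to $c_{g,g^{-1}}$ times a concrete nonzero constant carrying the factors $(-1)^N\cdot \epi[-\frac12\age(\id)]\cdot|G| = (-1)^N|G|$ coming from Definition~\ref{def_JfG} at $g=\id$. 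Second, I would compute the same pairing using axiom~\eqref{axiom_J_fG} to move $v_g$ across the pairing: $J_{f,G}(v_g \vdash (v_{g^{-1}}\vdash\zeta), \zeta) = (-1)^{\overline{v_g}\cdot\overline{v_{g^{-1}}}} J_{f,G}(v_{g^{-1}}\vdash\zeta, ((g^{-1})^\ast v_g)\vdash\zeta)$. Here I would use $v_{g^{-1}}\vdash\zeta = \alpha_{g^{-1}}\omega_{g^{-1}}$ and $v_g\vdash\zeta = \alpha_g\omega_g$ from \eqref{eq:e_g}, track the $G$-action $(g^{-1})^\ast$ on $v_g$, and apply the definition of $J_{f,g}$ on $\Omega'_{f,g}\otimes\Omega'_{f,g^{-1}}$ directly, which produces the sign $(-1)^{N-N_g}$, the phase $\epi[-\frac12\age(g)]$, and the factor $|G|$. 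The sign $(-1)^{\overline{v_g}\cdot\overline{v_{g^{-1}}}} = (-1)^{(N-N_g)^2}=(-1)^{N-N_g}$.

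Third, I would equate the two expressions and solve for $c_{g,g^{-1}}$. The factors of $|G|$ and $(-1)^N$ from the $g=\id$ side must cancel against the $(-1)^{N-N_g}$, $|G|$, and the grading sign from the axiom~\eqref{axiom_J_fG} side; bookkeeping of these together with the two occurrences of $\alpha_g\alpha_{g^{-1}}=(-1)^{\frac12(N-N_g)(N-N_g+1)}$ should collapse everything to the claimed $c_{g,g^{-1}} = (-1)^{\frac12(N-N_g)(N-N_g-1)}\cdot\epi[-\frac12\age(g)]$. The residue pairing on $\Fix(g)$ of $\omega_g$ against $(g^{-1})^\ast\omega_g$ contributes the factor that, combined with $H_{g,g^{-1}}$ being exactly the normalized Hessian, yields the matching nonzero residue on both sides, so the constants cancel cleanly.

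The hard part will be the careful sign and phase accounting, specifically isolating the pullback $(g^{-1})^\ast$ of $v_g$ and evaluating its effect on the residue pairing over $\Fix(g)$. Since $g$ acts on $\Omega'_{f,g}=\Omega_{f^g}$ through $\age(g)$, the Jacobian of $g^{-1}$ restricted to $\Fix(g)$ contributes a determinant whose phase is $\epi[\pm\age(g)]$, and I must verify that it combines with the explicit phase $\epi[-\frac12\age(g)]$ built into $J_{f,g}$ and the one coming from the $g=\id$ normalization so as to leave precisely $\epi[-\frac12\age(g)]$ rather than its square or inverse. The parity factor $(-1)^{\frac12(N-N_g)(N-N_g\pm1)}$ is likewise delicate: I expect it to arise from combining $\alpha_g\alpha_{g^{-1}}$ with the $(-1)^{N-N_g}$ appearing both in $J_{f,g}$ and in the grading sign, and the difference between $+1$ and $-1$ in the exponent is exactly what the $(-1)^{N-N_g}$ from axiom~\eqref{axiom_J_fG} supplies; getting that single sign right is the crux of the verification.
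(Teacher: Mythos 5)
Your overall strategy is the same as the paper's (evaluate one pairing in two ways, once via the explicit formula for $J_{f,g}$ and once via axiom (v) plus the normalization of $H_{g,g^{-1}}$), but there is a genuine gap in your choice of test element, and it is fatal as written. You propose to compute $J_{f,G}\bigl((v_g\circ v_{g^{-1}})\vdash\zeta,\zeta\bigr)$, i.e.\ to pair against the unit class. On the product side this gives $c_{g,g^{-1}}\,|G|\cdot{\rm Res}\,[H_{g,g^{-1}}]$, and on the other side (after moving $v_g$ across the pairing) a multiple of ${\rm Res}_{\Fix(g)}[1]$. Both residues vanish whenever $\mu_{f^g}>1$: the residue pairing pairs $[1]$ nontrivially only against the socle class $[\hess]$, and $[H_{g,g^{-1}}]$ is \emph{not} the socle class of $\Jac(f)$ in general. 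Your identity $[H_{g,g^{-1}}]=\frac{1}{\mu_f}[\hess(f)]$ misquotes the remark after Definition~\ref{definition: Hgh}, which asserts this only when $\Fix(g)=\{{\bf 0}\}$; the general normalization \eqref{H and hessians} says $[\hess(f^g)H_{g,g^{-1}}]=\frac{\mu_{f^g}}{\mu_f}[\hess(f)]$, so $H_{g,g^{-1}}$ only becomes the socle after multiplication by $\hess(f^g)$. (For instance, for $f=x^3+y^3$ and $g=\frac{1}{3}(1,0)$ one finds $H_{g,g^{-1}}=3x$, whose residue against $1$ is zero.) Consequently your two-way computation reads $0=0$ for generic $g$ and determines nothing.

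The repair is exactly what the paper does: insert the socle class $[\hess(f^g)]$ of $\Jac(f^g)$ into the $g$-sector and evaluate $J_{f,g}\bigl([\hess(f^g)]v_g\vdash\zeta,\,v_{g^{-1}}\vdash\zeta\bigr)$ in the two ways. The direct evaluation then produces $\alpha_g\alpha_{g^{-1}}\cdot(-1)^{N-N_g}\epi[-\tfrac12\age(g)]\cdot|G|\cdot\mu_{f^g}$ (a genuinely nonzero quantity), while the evaluation via axiom (v) and \eqref{H and hessians} produces $c_{g,g^{-1}}|G|\mu_{f^g}$, and the comparison gives the claimed formula. A smaller point: in your second step the element to pull back is $(h^{-1})^\ast$ with $h=g^{-1}$, i.e.\ $g^\ast v_g$, not $(g^{-1})^\ast v_g$; since $g^\ast v_g=\epi[-\age(g)]v_g$ while $(g^{-1})^\ast v_g=\epi[\age(g)]v_g$, this sign in the exponent would corrupt the phase even after the main gap is fixed. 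Note also that in the paper's arrangement the pairing is moved to the identity sector (taking $\omega=\omega_\id$, so the grading sign and the pullback are trivial), which avoids this bookkeeping altogether.
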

\begin{proof}
We have
\begin{align*}
\frac{1}{\mu_{f^g}}J_{f,g}([\hess(f^g)]v_g \vdash \zeta,v_{g^{-1}} \vdash \zeta )
&=\frac{\alpha_g\alpha_{g^{-1}}}{\mu_{f^g} }J_{f,g}([\hess(f^g)]\omega_g,\omega_{g^{-1}})\\
&=(-1)^{\frac{1}{2}(N-N_g)(N-N_g-1)}\cdot \epi\left[-\frac{1}{2}\age(g)\right]\cdot |G|.
\end{align*}
On the other hand, by axiom~\eqref{axiom_J_fG} and normalization~\eqref{H and hessians} of $H_{g,h}$, we have
\begin{align*}
\frac{1}{\mu_{f^g}}J_{f,g}([\hess(f^g)]v_g \vdash \zeta,v_{g^{-1}} \vdash \zeta )
&=\frac{1}{\mu_{f^g}}J_{f,id}(\omega_\id ,[\hess(f^g)]v_g \circ v_{g^{-1}} \vdash \zeta )\\
&=\frac{1}{\mu_{f^g}}J_{f,id}(\omega_\id ,c_{g,g^{-1}}[\hess(f^g)H_{g,g^{-1}}]\omega_\id )\\
&=\frac{c_{g,g^{-1}}}{\mu_{f}}J_{f,id}(\omega_\id ,[\hess(f)] \omega_\id)\\
&=c_{g,g^{-1}}|G|.
\end{align*}
\end{proof}
\begin{lemma}\label{lemma_beta=0}
For each pair $(g,h)$ of elements of $G_f$ such that $I_{g,h}= \emptyset$, we have 
\begin{equation}
c_{g,h} c_{h^{-1},g^{-1}} = (-1)^{(N-N_g)(N-N_h)}.
\end{equation}
In particular it follows that $c_{g,h} \neq 0$.
\end{lemma}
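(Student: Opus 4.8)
The plan is to exploit associativity of the product $\circ$ on $\Jac'(f,G)$ by computing the fourfold product $v_g\circ v_h\circ v_{h^{-1}}\circ v_{g^{-1}}$ in two different bracketings and comparing scalars. Since $I_{g,h}=I_g^c\cap I_h^c=\emptyset$ forces $I_g\cup I_h=\{1,\dots,N\}$, the pair $(g,h)$ is spanning and $H_{g,h}=1$; the same holds for $(h^{-1},g^{-1})$ because $I_{h^{-1},g^{-1}}=I_{g,h}=\emptyset$. Hence by Proposition~\ref{proposition_c_{g,h}} we have $v_g\circ v_h=c_{g,h}\,v_{gh}$ and $v_{h^{-1}}\circ v_{g^{-1}}=c_{h^{-1},g^{-1}}\,v_{(gh)^{-1}}$, with no polynomial prefactor to track. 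All pairs appearing below ($(g,h)$, $(h^{-1},g^{-1})$, $(g,g^{-1})$, $(h,h^{-1})$, $(gh,(gh)^{-1})$) are spanning, so every coefficient $c_{\bullet,\bullet}$ is defined.

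First I would bracket the product as $(v_g\circ v_h)\circ(v_{h^{-1}}\circ v_{g^{-1}})$, obtaining
$c_{g,h}c_{h^{-1},g^{-1}}\,v_{gh}\circ v_{(gh)^{-1}}=c_{g,h}\,c_{h^{-1},g^{-1}}\,c_{gh,(gh)^{-1}}\,[H_{gh,(gh)^{-1}}]$.
Then I would rebracket it as $v_g\circ(v_h\circ v_{h^{-1}})\circ v_{g^{-1}}$. Writing $A:=v_h\circ v_{h^{-1}}=c_{h,h^{-1}}[H_{h,h^{-1}}]$, an \emph{even} element of $\Jac'(f,\id)=\Jac(f)$, the $G$-twisted commutativity axiom~\eqref{axiom: G-twisted comm} applied with $A$ in the first slot gives $A\circ v_g=v_g\circ A$, because the relevant induced action is then $\id^\ast$ and the sign is $(-1)^{0\cdot(N-N_g)}=1$. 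Associativity therefore collapses the expression to $A\circ(v_g\circ v_{g^{-1}})=c_{g,g^{-1}}c_{h,h^{-1}}[H_{g,g^{-1}}H_{h,h^{-1}}]$, a product computed inside the commutative ring $\Jac(f)$. By Proposition~\ref{proposition: H_gh is a jac element}(ii) this equals $c_{g,g^{-1}}c_{h,h^{-1}}[H_{gh,(gh)^{-1}}]$. Since $[H_{gh,(gh)^{-1}}]\neq 0$ in $\Jac(f)$ by Proposition~\ref{proposition: H_gh is a jac element}(i), comparing the two computations cancels this class and yields the scalar identity
\[
c_{g,h}\,c_{h^{-1},g^{-1}}\,c_{gh,(gh)^{-1}}=c_{g,g^{-1}}\,c_{h,h^{-1}}.
\]

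Finally I would substitute the closed form of Lemma~\ref{lemma_gg^-1} for the three diagonal coefficients and solve for $c_{g,h}c_{h^{-1},g^{-1}}$. Writing $a=N-N_g$, $b=N-N_h$, $c=N-N_{gh}$, the exponential factors combine into $\epi\!\left[-\tfrac12\bigl(\age(g)+\age(h)-\age(gh)\bigr)\right]$, which is $1$: expressing each generator with exponents in $[0,1)$, a carry in the product $gh$ at index $i$ would require $i\in I_g^c\cap I_h^c=I_{g,h}=\emptyset$, so $\age(g)+\age(h)=\age(gh)$. The signs combine into $(-1)^{\frac12[a(a-1)+b(b-1)-c(c-1)]}$, and here I would invoke Proposition~\ref{prop: supergrading agrees}, which under $I_{g,h}=\emptyset$ gives the \emph{exact} equality $c=a+b$; substituting reduces the exponent to $-ab$, so the sign equals $(-1)^{ab}=(-1)^{(N-N_g)(N-N_h)}$, as claimed. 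The assertion $c_{g,h}\neq 0$ is then immediate, since the product equals $\pm 1$.

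The only step demanding genuine care is the rebracketing: one must apply~\eqref{axiom: G-twisted comm} in the direction placing the identity-component element $A$ in the $X$-slot, so that the twist is by $\id^\ast$ and no residual $g^\ast$-action on $[H_{h,h^{-1}}]$ survives; used in the other direction one picks up $g^\ast(A)\ne A$ in general. Everything else is either supplied by the cited results or is elementary sign-and-phase bookkeeping, which is nonetheless the place where an arithmetic slip would propagate into the final exponent.
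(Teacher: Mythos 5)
Your proof is correct and follows essentially the same route as the paper: both compare the two bracketings of $v_g\circ v_h\circ v_{h^{-1}}\circ v_{g^{-1}}$, invoke Lemma~\ref{lemma_gg^-1}, Proposition~\ref{proposition: H_gh is a jac element}(ii), the additivity of $\age$ under $I_{g,h}=\emptyset$, and the exact equality $N-N_{gh}=(N-N_g)+(N-N_h)$ from Proposition~\ref{prop: supergrading agrees}. The only cosmetic difference is that you commute the even identity-component element $A$ past $v_g$ via axiom~\eqref{axiom: G-twisted comm} to avoid the twist, whereas the paper keeps $g^\ast(H_{h,h^{-1}})$ and observes it equals $H_{h,h^{-1}}$ because $I_{g,h}=\emptyset$; these are equivalent.
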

\begin{remark}
If $I_{g,h}= \emptyset$ for a pair $(g,h)$ of elements of $G_f$, it is spanning.
\end{remark}
\begin{proof}
We have 
\begin{align*}
&\ v_g \circ  (v_{h} \circ v_{h^{-1}}) \circ v_{g^{-1}} \\
=&\  (-1)^{\frac{1}{2}(N-N_g)(N-N_g-1)+\frac{1}{2}(N-N_h)(N-N_h-1)}\cdot \epi\left[-\frac{1}{2}\age(g)-\frac{1}{2}\age(h)\right][g^*(H_{h,h^{-1}})H_{g,g^{-1}}]v_\id,\\
&\ (v_g \circ  v_{h}) \circ (v_{h^{-1}} \circ v_{g^{-1}}) \\
=& (-1)^{\frac{1}{2}(N-N_{gh})(N-N_{gh}-1)}\epi\left[-\frac{1}{2}\age(gh)\right] c_{g,h} c_{h^{-1},g^{-1}}[H_{gh, (gh)^{-1}}]v_\id.
\end{align*}
The proposition follows from the facts that the product $\circ$ is associative,
$g^*(H_{h,h^{-1}})=H_{h,h^{-1}}$ since $I_{g,h}=\emptyset$, 
$[H_{g,g^{-1}}H_{h,h^{-1}}] = [H_{gh, (gh)^{-1}}]$ in $\Jac(f)$, ${\rm age}(g)+{\rm age}(h)={\rm age}(gh)$ 
since $I_{g,h} = \emptyset$, and $(N-N_g) + (N-N_h) \equiv (N - N_{gh})\ ({\rm mod}\ 2)$ by Proposition~\ref{prop: supergrading agrees}.
\end{proof}
\begin{corollary}\label{cor: factorization}
Let $(g,h)$ be a spanning pair of elements of $G_f$ with the factorization $(g_1,g_2,h_1,h_2)$.
The complex numbers $c_{g_1,h_2}$, $c_{g_2,h_1}$ and $c_{g_1,h_1}$ are non-zero.
\end{corollary}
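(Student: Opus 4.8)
The plan is to obtain all three non-vanishing statements as immediate consequences of Lemma~\ref{lemma_beta=0}, which already guarantees $c_{g,h}\neq 0$ for any pair $(g,h)$ of elements of $G_f$ satisfying $I_{g,h}=\emptyset$. Accordingly, everything reduces to checking that the three pairs $(g_1,h_2)$, $(g_2,h_1)$ and $(g_1,h_1)$ have empty associated index set. The four factors $g_1,g_2,h_1,h_2$ all lie in $G_f$ by the definition of the factorization (Definition~\ref{definition: primitive decomposition}), and once the relevant index sets are shown to be empty, the remark following Lemma~\ref{lemma_beta=0} ensures that these pairs are spanning, so that the constants $c_{g_1,h_2}$, $c_{g_2,h_1}$ and $c_{g_1,h_1}$ are well defined in the sense of Proposition~\ref{proposition_c_{g,h}}.

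To perform the emptiness check I would read off the supports of the four factors from the explicit form of the factorization recorded in the remark after Definition~\ref{definition: primitive decomposition}. Up to the reordering of variables used there, the complements are supported on three mutually disjoint blocks of indices: $I_{g_1}^c=\{N_g+1,\dots,N-q\}$ (the $\alpha$-block), $I_{g_2}^c=I_{h_2}^c=\{N-q+1,\dots,N\}$ (the $\beta$-block), and $I_{h_1}^c=\{1,\dots,r\}$ (the $\gamma$-block). Since $I_{g_i,h_j}=I_{g_i}^c\cap I_{h_j}^c$, intersecting these blocks pairwise and using the inequality $r\le N_g$ from the remark yields $I_{g_1,h_2}=\emptyset$, $I_{g_2,h_1}=\emptyset$ and $I_{g_1,h_1}=\emptyset$; the last of these is in fact part of the very definition of the factorization.

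It then remains to apply Lemma~\ref{lemma_beta=0} to each of the three pairs, which gives $c_{g_1,h_2}\neq 0$, $c_{g_2,h_1}\neq 0$ and $c_{g_1,h_1}\neq 0$, completing the argument. I do not expect any genuine obstacle here: the entire conceptual weight of the statement already sits in Lemma~\ref{lemma_beta=0}, and the only thing demanding attention is the index bookkeeping for the three blocks under the reordering of variables, together with the elementary observation that all four factors are honest elements of $G_f$.
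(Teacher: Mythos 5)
Your proof is correct and follows the same route as the paper: the paper's own one-line proof simply observes that $I_{g_1,h_2}=I_{g_2,h_1}=I_{g_1,h_1}=\emptyset$ and implicitly invokes Lemma~\ref{lemma_beta=0}, exactly as you do. Your explicit block-by-block verification of the emptiness of the three index sets (using the presentation in the remark after Definition~\ref{definition: primitive decomposition} and the inequality $r\le N_g$) just spells out what the paper leaves to the reader.
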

\begin{proof}
It follows from the fact that $I_{g_1,h_2}=\emptyset$, $I_{g_2,h_1}=\emptyset$ and $I_{g_1,h_1}=\emptyset$. 
\end{proof}
\begin{proposition}\label{prop: 56}
Let $(g,h)$ be a spanning pair of elements of $G_f$ with the factorization $(g_1, g_2,h_1,h_2)$.
We have
\begin{equation}
c_{g,h}=(-1)^{\frac{1}{2}(N-N_{g_2})(N-N_{g_2}-1)}\cdot \epi\left[-\frac{1}{2}\age(g_2)\right]\cdot \frac{c_{g_1,h_1}}{c_{g_1,g_2} c_{h_2,h_1}}.
\end{equation}
In particular, $c_{g,h} \neq 0$.    
\end{proposition}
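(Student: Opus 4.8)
The plan is to compute $v_g\circ v_h$ by splitting $v_g$ and $v_h$ according to the factorization $(g_1,g_2,h_1,h_2)$ and then using associativity to isolate the product $v_{g_2}\circ v_{h_2}=v_{g_2}\circ v_{g_2^{-1}}$, whose structure constant is supplied by Lemma~\ref{lemma_gg^-1}. Since the supports of $g_1,g_2$ and of $h_1,h_2$ are disjoint, $I_{g_1,g_2}=I_{h_2,h_1}=\emptyset$, so $H_{g_1,g_2}=H_{h_2,h_1}=1$ and, by Lemma~\ref{lemma_beta=0}, the constants $c_{g_1,g_2}$ and $c_{h_2,h_1}$ are nonzero. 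Hence $v_{g_1}\circ v_{g_2}=c_{g_1,g_2}v_g$ and, as $G_f$ is abelian so that $h_2h_1=h$, $v_{h_2}\circ v_{h_1}=c_{h_2,h_1}v_h$.

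Solving these relations for $v_g$ and $v_h$, substituting, and using associativity,
\[
v_g\circ v_h=\frac{1}{c_{g_1,g_2}\,c_{h_2,h_1}}\,v_{g_1}\circ\bigl(v_{g_2}\circ v_{h_2}\bigr)\circ v_{h_1}.
\]
Because $h_2=g_2^{-1}$, Lemma~\ref{lemma_gg^-1} gives $v_{g_2}\circ v_{h_2}=c_{g_2,g_2^{-1}}\,P$, where $P:=[H_{g_2,g_2^{-1}}]\in\Jac'(f,\id)=\Jac(f)$. An element of $\Jac(f)$ has $\ZZ/2\ZZ$-degree $0$ and trivial twist, so by axiom~\eqref{axiom: G-twisted comm} it is central in $\Jac'(f,G_f)$; thus $P$ commutes past $v_{g_1}$. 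Combined with $v_{g_1}\circ v_{h_1}=c_{g_1,h_1}v_{gh}$ (here $I_{g_1,h_1}=\emptyset$, so $H_{g_1,h_1}=1$, and $g_1h_1=gh$) and the module structure $P\circ v_{gh}=P|_{\Fix(gh)}v_{gh}$ from axiom~\eqref{axiom_idmodule}, this yields
\[
v_g\circ v_h=\frac{c_{g_2,g_2^{-1}}\,c_{g_1,h_1}}{c_{g_1,g_2}\,c_{h_2,h_1}}\,\bigl[H_{g_2,g_2^{-1}}\bigr]\big|_{\Fix(gh)}\,v_{gh}.
\]

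The crux is to identify $[H_{g_2,g_2^{-1}}]\big|_{\Fix(gh)}$ with $[H_{g,h}]$ in $\Jac(f^{gh})$. Since $\Fix(g_2)=\Fix(g_2^{-1})$ one has $I_{g_2,g_2^{-1}}=I_{g_2}^c=I_{g,h}$, so $H_{g,h}$ and $H_{g_2,g_2^{-1}}$ are constant multiples of the very same minor $\det(\partial^2 f/\partial x_i\partial x_j)_{i,j\in I_{g,h}}$, and it remains to match the normalizations $\widetilde{m}_{g,h}$ and $\widetilde{m}_{g_2,g_2^{-1}}$. This is where the spanning hypothesis on $(g,h)$ is essential: by Lemma~\ref{lemma_cases}, each Thom--Sebastiani summand meeting $I_{g,h}$ has its $I_{g,h}$-part as a head with the remaining variables in $I_g\cap I_h\subseteq I_{g_2}$, and a coordinate is moved by both $g$ and $h$ precisely when it is moved by $g_2$; hence the classes $\Gamma_a,\Gamma_b$, the splitting indices $l$, and all data entering the explicit formulas~\eqref{eq:4.6} agree for the two pairs, while the summands with $I_\nu\subseteq I_g\cap I_h$ cancel in both normalizations. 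Therefore $\widetilde{m}_{g,h}=\widetilde{m}_{g_2,g_2^{-1}}$, so $H_{g,h}=H_{g_2,g_2^{-1}}$ as polynomials and $[H_{g_2,g_2^{-1}}]\big|_{\Fix(gh)}=[H_{g,h}]$.

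Comparing the last display with $v_g\circ v_h=c_{g,h}[H_{g,h}]v_{gh}$ and using that $[H_{g,h}]\neq 0$ in $\Jac(f^{gh})$ (Proposition~\ref{proposition: H_gh is a jac element}) gives $c_{g,h}=c_{g_2,g_2^{-1}}c_{g_1,h_1}/(c_{g_1,g_2}c_{h_2,h_1})$; substituting the value of $c_{g_2,g_2^{-1}}$ from Lemma~\ref{lemma_gg^-1} produces the asserted formula. The factor $c_{g_1,h_1}$ is nonzero by Corollary~\ref{cor: factorization} and $c_{g_1,g_2},c_{h_2,h_1}$ are nonzero by Lemma~\ref{lemma_beta=0}, so $c_{g,h}\neq 0$. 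I expect the identity $H_{g,h}=H_{g_2,g_2^{-1}}$ to be the only real obstacle; the rest is bookkeeping with associativity and constants already computed.
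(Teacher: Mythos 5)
Your proof is correct and follows essentially the same route as the paper's: both evaluate $v_{g_1}\circ(v_{g_2}\circ v_{h_2})\circ v_{h_1}$ in two ways via associativity, feed in Lemma~\ref{lemma_gg^-1} for $v_{g_2}\circ v_{g_2^{-1}}$ and Corollary~\ref{cor: factorization}/Lemma~\ref{lemma_beta=0} for the nonvanishing of the auxiliary constants. The only difference is that you justify the identification $H_{g,h}=H_{g_2,g_2^{-1}}$ (matching the normalizations $\widetilde{m}_{g,h}$ and $\widetilde{m}_{g_2,g_2^{-1}}$ via the Thom--Sebastiani decomposition and \eqref{eq:4.6}) in detail, where the paper simply asserts it from the definition of the factorization.
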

\begin{proof}
We have
\begin{align*}
v_{g_1} \circ (v_{g_2} \circ v_{h_2}) \circ v_{h_1} &= (-1)^{\frac{1}{2}(N-N_{g_2})(N-N_{g_2}-1)}\cdot \epi\left[-\frac{1}{2}\age(g_2)\right]\cdot  v_{g_1} \circ [H_{g_2,g_2^{-1}}] v_\id \circ v_{h_1} \\
&=  (-1)^{\frac{1}{2}(N-N_{g_2})(N-N_{g_2}-1)}\cdot \epi\left[-\frac{1}{2}\age(g_2)\right] \cdot c_{g_1,h_1}[H_{g_2,g_2^{-1}}]v_{gh}.
\end{align*}
On the other hand, we get:
\begin{align*}
(v_{g_1} \circ v_{g_2}) \circ (v_{h_2} \circ v_{h_1}) &= c_{g_1,g_2} e_{g_1g_2} \circ c_{h_2,h_1} v_{h_1h_2}  \\
&= c_{g_1,g_2} c_{h_2,h_1} c_{g,h} [H_{g,h}] v_{gh}.
\end{align*}
Note that $H_{g,h} = H_{g_2,g_2^{-1}} = H_{h_2,h_2^{-1}}$ by the definition of the factorization $(g_1,g_2,h_1,h_2)$.
By Corollary~\ref{cor: factorization}, we know that $c_{g_1,g_2}$ and $c_{h_2,h_1}$ are non-zero,
which gives the statement.
\end{proof}
Hence, by this proposition, we only have to determine $c_{g,h}$ for all pairs $(g,h)$ of elements of $G_f$ such that $I_{g,h}=\emptyset$. 
Suppose that $f=f_1\oplus\cdots\oplus f_p$ is a Thom--Sebastiani sum 
such that each $f_\nu$, $\nu=1,\dots, p$ is either of chain type or loop type. 
Then, we have a natural isomorphism $G_f\cong G_{f_1}\times \dots \times G_{f_p}$. 
Therefore, it follows that each $g\in G_f$ has the unique expression $g=g_1\cdots g_p$ such that 
$g_i\in G_{f_i}$ for all $i=1,\dots, p$, hence $I_{g_i,g_j}=\emptyset$ if $i\ne j$ and 
$I_g^c=I_{g_1}^c\cup \dots \cup I_{g_p}^c$. 
Under this notation, define $\widetilde v_g$ by
\begin{equation}\label{eq:bar v}
\widetilde v_g:=\widetilde{\varepsilon}_{g_1,\dots, g_p}v_{g_1}\circ\dots \circ v_{g_p}.
\end{equation}
Obviously, $\widetilde v_g$ is a non-zero constant multiple of $v_g$ for all $g\in G_f$.
It is also easy to see that $\widetilde v_g$ does not depend on the choice of ordering in the Thom--Sebastiani sum
and that for a pair $(g,h)$ of elements of $G_f$ with $I_{g,h}=\emptyset$ we have
\begin{equation}
\widetilde v_g\circ \widetilde v_h=\frac{1}{\widetilde \varepsilon_{g,h}}\widetilde v_{gh}.
\end{equation}
\begin{proposition}
For each $g\in G$, we have
\begin{equation}
\widetilde v_g\circ \widetilde v_{g^{-1}}=(-1)^{\frac{1}{2}(N-N_g)(N-N_g-1)}\cdot \epi\left[-\frac{1}{2}\age(g)\right]\cdot 
[H_{g,g^{-1}}]\widetilde v_\id.
\end{equation}
\end{proposition}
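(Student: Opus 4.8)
The plan is to argue by induction on the number $p$ of Thom--Sebastiani summands on which $g$ acts non-trivially, reducing everything to the single-summand case, which is exactly Lemma~\ref{lemma_gg^-1} combined with Proposition~\ref{proposition_c_{g,h}}. Throughout I abbreviate $\overline{g}:=N-N_g$ and recall $\widetilde v_\id=v_\id$. For the base case $p\le 1$ one has $\widetilde v_g=v_g$ and $\widetilde v_{g^{-1}}=v_{g^{-1}}$, so $\widetilde v_g\circ\widetilde v_{g^{-1}}=v_g\circ v_{g^{-1}}=c_{g,g^{-1}}[H_{g,g^{-1}}]v_\id$ with $c_{g,g^{-1}}=(-1)^{\frac{1}{2}\overline{g}(\overline{g}-1)}\epi[-\tfrac{1}{2}\age(g)]$, which is the claimed formula.

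For the inductive step I would write $g=g'g_p$, where $g'=g_1\cdots g_{p-1}$ is supported on $f_1\oplus\cdots\oplus f_{p-1}$ and $g_p\in G_{f_p}$, so that $I_{g',g_p}=\emptyset$. Using the relation $\widetilde v_a\circ\widetilde v_b=\widetilde{\varepsilon}_{a,b}^{-1}\widetilde v_{ab}$ established above (valid when $I_{a,b}=\emptyset$), I obtain $\widetilde v_g=\widetilde{\varepsilon}_{g',g_p}\,\widetilde v_{g'}\circ\widetilde v_{g_p}$ and $\widetilde v_{g^{-1}}=\widetilde{\varepsilon}_{g'^{-1},g_p^{-1}}\,\widetilde v_{g'^{-1}}\circ\widetilde v_{g_p^{-1}}$. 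Since $I_{g'^{-1}}=I_{g'}$ and $I_{g_p^{-1}}=I_{g_p}$, the two signs coincide, hence their product is $1$. I then expand $\widetilde v_g\circ\widetilde v_{g^{-1}}$ by associativity and move $\widetilde v_{g_p}$ past $\widetilde v_{g'^{-1}}$ by the $G$-twisted graded commutativity of axiom~\eqref{axiom: G-twisted comm}, giving $\widetilde v_{g_p}\circ\widetilde v_{g'^{-1}}=(-1)^{\overline{g_p}\,\overline{g'}}\,g_p^\ast(\widetilde v_{g'^{-1}})\circ\widetilde v_{g_p}$, which regroups the product as $(-1)^{\overline{g_p}\,\overline{g'}}(\widetilde v_{g'}\circ\widetilde v_{g'^{-1}})\circ(\widetilde v_{g_p}\circ\widetilde v_{g_p^{-1}})$.

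The key step --- and the one I expect to demand the most care --- is to verify that $g_p^\ast(\widetilde v_{g'^{-1}})=\widetilde v_{g'^{-1}}$, i.e.\ that the induced automorphism action of $g_p$ fixes $\widetilde v_{g'^{-1}}$. Here one must unwind the definition of the $\Aut(f,G)$-action: from $g_p^\ast(v_{g'^{-1}})\vdash g_p^\ast(\zeta)=g_p^\ast(v_{g'^{-1}}\vdash\zeta)=\alpha_{g'^{-1}}g_p^\ast(\omega_{g'^{-1}})$ one computes $g_p^\ast(\zeta)=\det(g_p)\,\zeta$ with $\det(g_p)=\epi[\age(g_p)]$. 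Crucially, because $g'$ acts trivially on the variables of $f_p$, the locus $\Fix(g'^{-1})$ contains all of them, so $\omega_{g'^{-1}}$ carries the full top form in the $f_p$-directions and $g_p^\ast(\omega_{g'^{-1}})=\det(g_p)\,\omega_{g'^{-1}}$ as well; the two factors $\det(g_p)$ cancel, leaving $g_p^\ast(v_{g'^{-1}})=v_{g'^{-1}}$. This cancellation is the subtle point, since a priori one might expect a surviving phase $\epi[\age(g_p)]$.

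Finally I would assemble the scalars. By the induction hypothesis $\widetilde v_{g'}\circ\widetilde v_{g'^{-1}}=c_{g',g'^{-1}}[H_{g',g'^{-1}}]v_\id$, while the base case gives $\widetilde v_{g_p}\circ\widetilde v_{g_p^{-1}}=c_{g_p,g_p^{-1}}[H_{g_p,g_p^{-1}}]v_\id$. Multiplying these inside $\Jac(f)=\Jac'(f,\id)$ and applying Proposition~\ref{proposition: H_gh is a jac element}(ii) (with $I_{g',g_p}=\emptyset$) collapses the Hessian minors, $[H_{g',g'^{-1}}H_{g_p,g_p^{-1}}]=[H_{g,g^{-1}}]$. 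It then remains to check $(-1)^{\overline{g_p}\,\overline{g'}}c_{g',g'^{-1}}c_{g_p,g_p^{-1}}=c_{g,g^{-1}}$: the phase part is immediate from $\age(g)=\age(g')+\age(g_p)$ (as $I_{g',g_p}=\emptyset$), and the sign part reduces to the elementary identity $\overline{g_p}\,\overline{g'}+\tfrac{1}{2}\overline{g'}(\overline{g'}-1)+\tfrac{1}{2}\overline{g_p}(\overline{g_p}-1)=\tfrac{1}{2}\overline{g}(\overline{g}-1)$, using $\overline{g}=\overline{g'}+\overline{g_p}$. This closes the induction and yields the stated formula.
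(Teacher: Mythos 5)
Your proof is correct and follows essentially the same route as the paper's: induction on the Thom--Sebastiani factors via the presentation $\widetilde v_g=\widetilde{\varepsilon}_{g',g_p}\,\widetilde v_{g'}\circ v_{g_p}$, cancellation of the two $\widetilde{\varepsilon}$-signs, regrouping by $G$-twisted commutativity, and combining Lemma~\ref{lemma_gg^-1}, Proposition~\ref{proposition: H_gh is a jac element}(ii), the additivity of $\age$ on disjointly supported factors, and the quadratic sign identity. The only difference is that you explicitly verify $g_p^\ast(\widetilde v_{g'^{-1}})=\widetilde v_{g'^{-1}}$ via the cancellation of the two $\det(g_p)$ factors, a step the paper uses implicitly.
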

\begin{proof}
There is an inductive presentation of $\widetilde v_g$ given by 
\[
\widetilde v_g=
\begin{cases}
v_{g_1} &\quad \text{if}\quad g=g_1,\\
\widetilde{\varepsilon}_{g_1\dots g_i, g_{i+1}}\widetilde v_{g_1\dots g_i} \circ v_{g_{i+1}} & \quad \text{if}\quad g=g_1\dots g_i g_{i+1},\ i=1,\dots, p-1.
\end{cases}
\]
The statement follows by induction from the following calculation:
\begin{align*}
\widetilde v_g\circ \widetilde v_{g^{-1}}
=&\ (\widetilde{\varepsilon}_{g_1\dots g_i, g_{i+1}}\widetilde v_{g_1\dots g_i} \circ v_{g_{i+1}})
\circ (\widetilde{\varepsilon}_{g_1^{-1}\dots g_i^{-1}, g_{i+1}^{-1}}\widetilde v_{g_1^{-1}\dots g_i^{-1}} \circ v_{g_{i+1}^{-1}})\\
= &\ (-1)^{(N-N_{g_1^{-1}\dots g_i^{-1}})(N-N_{g_{i+1}})}\cdot (\widetilde v_{g_1\dots g_i} \circ \widetilde v_{g_1^{-1}\dots g_i^{-1}} )\circ (v_{g_{i+1}}\circ v_{g_{i+1}^{-1}})\\
=&\ (-1)^{(N-N_{g_1\dots g_i})(N-N_{g_{i+1}})+\frac{1}{2}(N-N_{g_1\dots g_i})(N-N_{g_1\dots g_i}-1)+\frac{1}{2}(N-N_{g_{i+1}})(N-N_{g_{i+1}}-1)}\\
&\ \cdot \epi\left[-\frac{1}{2}\age (g_1\dots g_i)-\frac{1}{2}\age(g_{i+1})\right]\cdot 
[H_{g_1\dots g_i,g_1^{-1}\dots g_i^{-1}}H_{g_{i+1},g_{i+1}^{-1}}]\widetilde v_\id\\
=&\ (-1)^{\frac{1}{2}(N-N_g)(N-N_g-1)}\cdot \epi\left[-\frac{1}{2}\age(g)\right]\cdot 
[H_{g,g^{-1}}]\widetilde v_\id.
\end{align*}
\end{proof}
This proposition says that by replacing the map $\alpha:G_f\longrightarrow \CC^\ast$. to the suitable one
we have a new basis $\{\widetilde v_g\}_{g\in G_f}$ instead of $\{v_g\}_{g\in G_f}$. 
To summarize, we finally obtain the following
\begin{corollary}
Let $(g,h)$ be a spanning pair of elements of $G_f$ with the factorization $(g_1,g_2,h_1,h_2)$.
We have 
\begin{equation}
\widetilde v_g\circ \widetilde v_h=(-1)^{\frac{1}{2}(N-N_{g_2})(N-N_{g_2}-1)}\cdot \epi\left[-\frac{1}{2}\age(g_2)\right]\cdot \frac{\widetilde \varepsilon_{g_1,g_2} \widetilde \varepsilon_{h_2,h_1}}{\widetilde \varepsilon_{g_1,h_1}}[H_{g,h}]\widetilde v_{gh}.
\end{equation}
In particular, for any subgroup $G$ of $G_f$, if a $G$-twisted Jacobian algebra of $f$ exists, then it is uniquely determined by the axioms in Definition~\ref{axioms} up to isomorphism.
\end{corollary}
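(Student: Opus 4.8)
The plan is to follow closely the associativity computation from the proof of Proposition~\ref{prop: 56}, but to carry it out in the renormalized basis $\{\widetilde v_g\}_{g\in G_f}$. The advantage is that the two ingredients needed are already available in their cleanest, constant-free form: the mixed relation $\widetilde v_a\circ \widetilde v_b=\widetilde\varepsilon_{a,b}^{-1}\widetilde v_{ab}$ valid whenever $I_{a,b}=\emptyset$, and the self-pairing relation $\widetilde v_{c}\circ \widetilde v_{c^{-1}}=(-1)^{\frac{1}{2}(N-N_c)(N-N_c-1)}\epi[-\frac{1}{2}\age(c)][H_{c,c^{-1}}]\widetilde v_\id$ from the preceding proposition. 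Because neither relation involves the undetermined scalars $c_{\bullet,\bullet}$, the resulting structure constants will be explicit functions of $(f,G)$ alone.

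First I would record the elementary properties of the factorization $(g_1,g_2,h_1,h_2)$ of Definition~\ref{definition: primitive decomposition}: from the explicit shape of $g_1,g_2,h_1,h_2$ one sees that $I_{g_1,g_2}$, $I_{h_2,h_1}$ and $I_{g_1,h_1}$ are all empty, that $h_2=g_2^{-1}$, that $g_1h_1=gh$, and that $H_{g,h}=H_{g_2,g_2^{-1}}$. Using the first two vanishings and the mixed relation I can write $\widetilde v_g=\widetilde\varepsilon_{g_1,g_2}\,\widetilde v_{g_1}\circ \widetilde v_{g_2}$ and $\widetilde v_h=\widetilde\varepsilon_{h_2,h_1}\,\widetilde v_{h_2}\circ \widetilde v_{h_1}$. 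Then I would evaluate the four-fold product $\widetilde v_{g_1}\circ \widetilde v_{g_2}\circ \widetilde v_{h_2}\circ \widetilde v_{h_1}$ by re-associating it as $\widetilde v_{g_1}\circ(\widetilde v_{g_2}\circ \widetilde v_{g_2^{-1}})\circ \widetilde v_{h_1}$. The self-pairing relation turns the central factor into the sign $(-1)^{\frac{1}{2}(N-N_{g_2})(N-N_{g_2}-1)}$, the phase $\epi[-\frac{1}{2}\age(g_2)]$ and the class $[H_{g_2,g_2^{-1}}]\widetilde v_\id$; since $\widetilde v_\id$ is the unit and $\Jac'(f,\id)=\Jac(f)$ is central (by the $G$-twisted commutativity for the identity), this class may be pulled outside, and $I_{g_1,h_1}=\emptyset$ lets me collapse $\widetilde v_{g_1}\circ \widetilde v_{h_1}=\widetilde\varepsilon_{g_1,h_1}^{-1}\widetilde v_{gh}$. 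Restoring the two prefactors $\widetilde\varepsilon_{g_1,g_2}$ and $\widetilde\varepsilon_{h_2,h_1}$ and substituting $H_{g_2,g_2^{-1}}=H_{g,h}$ reproduces the asserted formula exactly.

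The uniqueness statement is then a formal consequence. Every product $\widetilde v_g\circ \widetilde v_h$ is now completely described: it vanishes for non-spanning $(g,h)$ by the $\widetilde v$-version of Proposition~\ref{prop: sum of the fixes}, and for a spanning pair it is given by the displayed formula, whose right-hand side is built only from the signatures $\widetilde\varepsilon_{\bullet,\bullet}$, the ages and the intrinsically defined classes $[H_{g,h}]$ --- all determined by $(f,G)$ and independent of the particular algebra. Since axiom~\eqref{axiom_idmodule} makes each $\Jac'(f,g)$ the cyclic $\Jac(f)$-module generated by $\widetilde v_g$, these constants determine the product on all of $\Jac'(f,G)$; the only residual freedom is the rescaling of the generators (equivalently, the choice of $\zeta$ and of the map $\alpha$), which yields isomorphic algebras. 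Hence any two $G$-twisted Jacobian algebras are isomorphic via $\widetilde v_g\mapsto \widetilde v_g'$.

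I expect the main difficulty to be bookkeeping rather than conceptual: I must keep the three signatures $\widetilde\varepsilon_{g_1,g_2}$, $\widetilde\varepsilon_{h_2,h_1}$, $\widetilde\varepsilon_{g_1,h_1}$ in their correct positions (numerator versus denominator) through the repeated re-associations, where a single transposed pair would flip a sign. The other delicate point is to justify rigorously the ``pulling out'' of $[H_{g_2,g_2^{-1}}]$ past $\circ$: this rests on $\Jac'(f,\id)$ acting centrally through the module structure $\vdash$ and on $\widetilde v_\id$ being genuinely the unit, and it requires identifying the a priori class in $\Jac(f^{gh})$ with $[H_{g,h}]$ so that the final coefficient is unambiguous in $\Jac'(f,gh)$.
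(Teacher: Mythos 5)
Your proposal is correct and follows essentially the paper's own route: the paper obtains the corollary by specializing the re-association identity of Proposition~\ref{prop: 56} to the basis $\{\widetilde v_g\}$, whose structure constants for disjointly supported pairs and for $(g,g^{-1})$ have just been computed, and your direct four-fold re-association $\widetilde v_{g_1}\circ(\widetilde v_{g_2}\circ\widetilde v_{g_2^{-1}})\circ\widetilde v_{h_1}$ is the same computation carried out directly in that basis. The delicate points you flag --- the centrality of $\Jac'(f,\id)$ needed to pull out $[H_{g_2,g_2^{-1}}]$, the identification $H_{g,h}=H_{g_2,g_2^{-1}}$, and the disjointness relations $I_{g_1,g_2}=I_{h_2,h_1}=I_{g_1,h_1}=\emptyset$ --- are exactly the ingredients the paper invokes in the proof of Proposition~\ref{prop: 56}.
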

\subsection{Existence}
In this subsection, we prove the existence of a $G$-twisted Jacobian algebra of $f$. 
We first show this when $G=G_f$.
\begin{definition}
Define a $\ZZ/2\ZZ$-graded $\CC$-module $\A'=\A'_{\overline{0}}\oplus \A'_{\overline{1}}$ as follows: 
for each $g\in G_f$, consider a free $\Jac(f^g)$-module $\A'_g$ of rank one generated by a formal letter $\overline v_g$,
\begin{subequations}
\begin{equation}
\A'_g=\Jac(f^g)\overline v_g.
\end{equation}
and set
\begin{equation}
\A'_{\overline{0}}:=\bigoplus_{\substack{g\in G_f\\ N-N_g\equiv 0\ (\text{\rm mod } 2)}}\A'_g,
\quad
\A'_{\overline{1}}:=\bigoplus_{\substack{g\in G_f\\ N-N_g\equiv 1\ (\text{\rm mod } 2)}}\A'_g.
\end{equation}
\end{subequations}
\end{definition}
By definition, axiom~\eqref{axiom_vs} trivially holds for $\A'$.
\begin{definition}
For a spanning pair $(g,h)$ of elements of $G_f$ with the factorization $(g_1,g_2,h_1,h_2)$, set
\begin{equation}\label{bar c}
\overline c_{g,h} := 
(-1)^{\frac{1}{2}(N-N_{g_2})(N-N_{g_2}-1)}\cdot \epi \left[-\frac{1}{2}\age(g_2)\right]\cdot 
\frac{\widetilde{\varepsilon}_{g_1,g_2}\widetilde{\varepsilon}_{h_2,h_1}}{\widetilde{\varepsilon}_{g_1,h_1}}.
\end{equation}
\end{definition}
It is also easy to see that
\begin{subequations}
\begin{eqnarray}
&\overline c_{g,\id}=1=\overline c_{\id,g},\ &g\in G_f,\\
&\overline c_{g,g^{-1}} = (-1)^{\frac{1}{2}(N-N_g)(N-N_g - 1)}\cdot \epi \left[-\frac{1}{2}\age(g)\right],\quad &g\in G_f,\\
&\overline c_{g,h}=\widetilde{\varepsilon}_{g,h}^{-1},\quad &g,h\in G_f,\ I_{g,h}= \emptyset.
\end{eqnarray}
\end{subequations}
\begin{definition}
For each $g,h\in G_f$, define an element of $\A'_{gh}$ by 
\begin{equation}
\overline v_g \circ \overline v_h:=
\begin{cases}
\overline c_{g,h} \left[H_{g,h} \right] \overline v_{gh} & \text{if the pair $(g,h)$ is spanning}\\
0 & \text{otherwise}
\end{cases}.
\end{equation}
It is clear that $\overline v_\id\circ\overline v_g=\overline v_g=\overline v_g\circ\overline v_\id$ 
since $I_{\id,g}=I_{g,\id}=\emptyset$ and hence $[H_{\id,g}]=[H_{g,\id}]=1$.
\end{definition}
\begin{proposition}\label{prop:Gtwistedcomm}
For a spanning pair $(g,h)$ of elements of $G_f$ with the factorization $(g_1,g_2,h_1,h_2)$, we have 
\begin{equation}
\overline c_{g,h} = (-1)^{(N-N_g)(N-N_h)}\cdot \epi\left[-\age(g_2)\right]\cdot \overline c_{h,g}.
\end{equation}
Hence, we have 
\begin{equation}
\overline v_g\circ \overline v_h=(-1)^{(N-N_g)(N-N_h)}
\cdot\left(\epi\left[-\age(g_2)\right] \overline v_h\circ \overline v_g\right).
\end{equation}
\end{proposition}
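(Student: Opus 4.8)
The plan is to use that $G_f$ is abelian to read off the factorization of the pair $(h,g)$ directly from that of $(g,h)$, and then to reduce the asserted relation between $\overline{c}_{g,h}$ and $\overline{c}_{h,g}$ to an elementary comparison of signs and roots of unity coming straight from the definition~\eqref{bar c}. The second, displayed equation will then be a one-line consequence.

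First I would establish that if $(g_1,g_2,h_1,h_2)$ is the factorization of $(g,h)$, then $(h_1,h_2,g_1,g_2)$ is a factorization of $(h,g)$. This is immediate from Definition~\ref{definition: primitive decomposition}: one has $h=h_1h_2$, $g=g_1g_2$, and, since $G_f$ is commutative, $h_2g_2=g_2h_2=\id$, while $I_{h_1,g_1}=I_{g_1,h_1}=\emptyset$. The key consequence is that the ``cancelling'' factor for $(h,g)$ is $h_2=g_2^{-1}$, so its fixed locus coincides with that of $g_2$ and its support is again $I_{g,h}=I_{h,g}$.

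Next I record the numerical data. Put $s:=N-N_{g_2}=N-N_{h_2}=|I_{g,h}|$ (the two dimensions agree because $h_2=g_2^{-1}$), and set $a:=N-N_{g_1}$, $b:=s$, $c:=N-N_{h_1}$. Disjointness of the supports of $g_1,g_2$ and of $h_1,h_2$ gives $N-N_g=a+b$ and $N-N_h=b+c$, and the age identity $\age(g_2)+\age(g_2^{-1})=N-N_{g_2}$ used in Proposition~\ref{prop:G-sym J} yields $\age(h_2)=s-\age(g_2)$. Writing out $\overline{c}_{g,h}$ from~\eqref{bar c} and $\overline{c}_{h,g}$ from the same formula applied to the factorization $(h_1,h_2,g_1,g_2)$, I would form the quotient $\overline{c}_{g,h}/\overline{c}_{h,g}$. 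The prefactors $(-1)^{\frac12 s(s-1)}$ cancel since $N-N_{g_2}=N-N_{h_2}$; the age contributions combine, via $\age(h_2)=s-\age(g_2)$, into $\epi\left[\tfrac{s}{2}\right]\cdot\epi\left[-\age(g_2)\right]$; and for the $\widetilde{\varepsilon}$-part I apply the antisymmetry $\widetilde{\varepsilon}_{x,y}=(-1)^{(N-N_x)(N-N_y)}\widetilde{\varepsilon}_{y,x}$ (legitimate because $I_{g_1,g_2}=I_{h_1,h_2}=I_{g_1,h_1}=\emptyset$) to the three pairs $(g_1,g_2)$, $(h_2,h_1)$, $(g_1,h_1)$, obtaining the sign $(-1)^{ab+bc+ca}$.

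Assembling these, $\overline{c}_{g,h}/\overline{c}_{h,g}=(-1)^{ab+bc+ca}\,\epi\left[\tfrac{s}{2}\right]\epi\left[-\age(g_2)\right]$. Since $\epi\left[\tfrac{s}{2}\right]=(-1)^{s}=(-1)^{b}$ and $(N-N_g)(N-N_h)=(a+b)(b+c)\equiv ab+bc+ca+b\pmod 2$ (using $b^2\equiv b$), this equals $(-1)^{(N-N_g)(N-N_h)}\epi\left[-\age(g_2)\right]$, which is the first assertion. The second assertion then follows at once: as $G_f$ is abelian, $gh=hg$ and $\overline{v}_{gh}=\overline{v}_{hg}$, while $H_{g,h}=H_{h,g}$ because $I_{g,h}=I_{h,g}$ and the normalization constant $\widetilde{m}_{g,h}$ is manifestly symmetric in $g,h$; substituting the proven relation into $\overline{v}_g\circ\overline{v}_h=\overline{c}_{g,h}[H_{g,h}]\overline{v}_{gh}$ gives the stated $G$-twisted graded commutativity. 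I expect the only real difficulty to be the sign bookkeeping in the third step, in particular keeping the three $\widetilde{\varepsilon}$-flips in order and noticing that the leftover factor $\epi\left[\tfrac{s}{2}\right]=(-1)^{b}$ is exactly what the parity $b^2\equiv b$ in $(N-N_g)(N-N_h)$ demands; conceptually there is no obstruction, since the commutativity of $G_f$ makes the factorization of $(h,g)$ a mere relabeling of that of $(g,h)$.
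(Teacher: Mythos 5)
Your proposal is correct and follows essentially the same route as the paper: both read off the factorization of $(h,g)$ as $(h_1,h_2,g_1,g_2)$, compare $\overline{c}_{g,h}$ with $\overline{c}_{h,g}$ directly from the definition~\eqref{bar c} using $h_2=g_2^{-1}$, the identity $\age(g_2)+\age(h_2)=N-N_{g_2}$, and the three $\widetilde{\varepsilon}$-antisymmetries, and your sign bookkeeping $(a+b)(b+c)\equiv ab+bc+ca+b \pmod 2$ matches the paper's exponent exactly. Your explicit justification that $H_{g,h}=H_{h,g}$ and $\overline v_{gh}=\overline v_{hg}$ for the second displayed equation is a detail the paper leaves implicit at this point, but there is no substantive difference.
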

\begin{proof}
We have
\begin{align*}
\overline c_{g,h} =& 
(-1)^{\frac{1}{2}(N-N_{g_2})(N-N_{g_2}-1)}\cdot  \epi \left[-\frac{1}{2}\age(g_2)\right]\cdot \frac{\widetilde{\varepsilon}_{g_1,g_2}\widetilde{\varepsilon}_{h_2,h_1}}{\widetilde{\varepsilon}_{g_1,h_1}} \\
=&(-1)^{(N-N_{g_1})(N-N_{g_2})+(N-N_{h_1})(N-N_{h_2})-(N-N_{g_1})(N-N_{h_1})+(N-N_{g_2})}\cdot\epi \left[-\age(g_2)\right]\\
&\cdot (-1)^{\frac{1}{2}(N-N_{h_2})(N-N_{h_2}-1)} \cdot \epi \left[-\frac{1}{2}\age(h_2)\right]\cdot 
\frac{\widetilde{\varepsilon}_{h_1,h_2}\widetilde{\varepsilon}_{g_2,g_1}}{\widetilde{\varepsilon}_{h_1,g_1}}\\
=& (-1)^{(N-N_g)(N-N_h)}\cdot \epi \left[-\age(g_2)\right]\cdot \overline c_{h,g},
\end{align*}
where we used that $h_2=g_2^{-1}$, $N-N_{g_2}=\age(g_2)+\age(h_2)$ and Proposition~\ref{prop: supergrading agrees}.
\end{proof}
\begin{proposition}\label{prop:ass bar v} 
For each $g,g',g''\in G_f$, we have 
\begin{equation}\label{eq:associativity}
(\overline v_g\circ \overline v_{g'})\circ \overline v_{g''}=\overline v_g\circ (\overline v_{g'}\circ \overline v_{g''}).
\end{equation}
\end{proposition}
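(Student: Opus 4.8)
The plan is to reduce the identity \eqref{eq:associativity} to a single scalar relation in $\Jac(f^{gg'g''})$ and then verify that relation. First I would make explicit how $\circ$ acts on non-generator elements: since each $\A'_g=\Jac(f^g)\overline v_g$ and part (iii) of Proposition~\ref{proposition: H_gh is a jac element} guarantees $[x_jH_{g,h}]=0$ in $\Jac(f^{gh})$ for $j\in I_{g,h}$, the rule $([\phi]\overline v_g)\circ([\psi]\overline v_h):=\overline c_{g,h}[\phi\psi H_{g,h}]\overline v_{gh}$ (and $0$ when $(g,h)$ is not spanning) is well defined and bilinear over $\Jac(f)$ acting through the restriction maps $\Jac(f)\to\Jac(f^g)$. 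Expanding both sides of \eqref{eq:associativity} with this rule, associativity becomes the equality
\begin{equation}
\overline c_{g,g'}\,\overline c_{gg',g''}\,[H_{g,g'}H_{gg',g''}]=\overline c_{g',g''}\,\overline c_{g,g'g''}\,[H_{g',g''}H_{g,g'g''}]
\end{equation}
in $\Jac(f^{gg'g''})$, where each $H$ is read as a polynomial and the products are taken in $\Jac(f^{gg'g''})$.

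I would then separate this into a \emph{polynomial part} and a \emph{scalar part}. For the polynomial part I would establish $[H_{g,g'}H_{gg',g''}]=[H_{g',g''}H_{g,g'g''}]$, up to the explicit constant produced by the $\overline c$'s, using the Hessian normalization \eqref{H and hessians} of the classes $H_{g,h}$ together with part (ii) of Proposition~\ref{proposition: H_gh is a jac element}; the relations $[x_jH_{g,h}]=0$ again control the ambiguity in restricting to $\Jac(f^{gg'g''})$. The scalar part is an identity among the constants $\overline c_{g,h}$ of \eqref{bar c}: using $\overline c_{g,h}=\widetilde{\varepsilon}_{g,h}^{-1}$ when $I_{g,h}=\emptyset$, the value of $\overline c_{g,g^{-1}}$, the cocycle relations for $\widetilde{\varepsilon}$ in Definition~\ref{def: epsilon}, the additivity of $\age$, and Proposition~\ref{prop: supergrading agrees}, I would reduce the scalar identity to the associativity of the cocycle $\widetilde{\varepsilon}$.

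To organize the computation I would exploit the Thom--Sebastiani decomposition $f=f_1\oplus\cdots\oplus f_p$ and the induced splitting $G_f\cong G_{f_1}\times\cdots\times G_{f_p}$. Since $\age$ is additive, $N-N_g=\sum_\nu(|I_\nu|-N_{g^{(\nu)}})$, and $H_{g,h}$ factors as a product of block determinants (as in the factorization preceding Proposition~\ref{proposition: H_gh is a jac element}), while all cross-block contributions are governed by the permutation signs $\widetilde{\varepsilon}$, the identity factors over the blocks up to the $\widetilde{\varepsilon}$-cocycle, which matches on both sides by its associativity property in Definition~\ref{def: epsilon}. It therefore suffices to check \eqref{eq:associativity} when $f$ is a single chain or loop polynomial, where the finitely many configurations of $(g,g',g'')$ are classified as in Lemma~\ref{lemma_cases} and can be verified directly from the monomial description of Proposition~\ref{monomial basis} and the explicit Hessian computations \eqref{eq:4.6}.

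\textbf{The main obstacle} is twofold. The first difficulty is the consistency of vanishing: one side can be zero because a pair such as $(g',g'')$ fails to be spanning, and one must then show that the other side also vanishes in $\Jac(f^{gg'g''})$; this does not follow formally and forces one to invoke the relations $[x_jH_{g,h}]=0$ of Proposition~\ref{proposition: H_gh is a jac element}(iii) to annihilate the surviving $H$-product class. The second difficulty is the sign bookkeeping: the quadratic signs $(-1)^{\frac{1}{2}(N-N_{g_2})(N-N_{g_2}-1)}$, the phases $\epi\left[-\frac{1}{2}\age(g_2)\right]$, and the permutation signs $\widetilde{\varepsilon}$ must be shown to recombine identically on the two sides, and keeping these phases coherent across the factorization $(g_1,g_2,h_1,h_2)$ used in \eqref{bar c} is the most delicate part of the argument.
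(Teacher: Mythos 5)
Your overall reduction matches the paper's: both split \eqref{eq:associativity} into the equality $[H_{g,g'}H_{gg',g''}]=[H_{g',g''}H_{g,g'g''}]$ in $\Jac(f^{gg'g''})$ plus the scalar cocycle identity $\overline c_{g,g'}\,\overline c_{gg',g''}=\overline c_{g,g'g''}\,\overline c_{g',g''}$, and both use Proposition~\ref{proposition: H_gh is a jac element}(iii) to control when either side vanishes. However, there is a genuine gap at the point you yourself flag as ``the most delicate part.'' Each constant $\overline c_{\bullet,\bullet}$ is defined through the factorization $(g_1,g_2,h_1,h_2)$ of the \emph{pair} it belongs to, so the four constants appearing in the cocycle identity are expressed in terms of four a priori unrelated factorizations. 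The paper's essential structural input is Lemma~\ref{lem:ggg}: a simultaneous nine-element decomposition $g=g_1g_2g_3$, $g'=g_1'g_2'g_3'$, $g''=g_1''g_2''g_3''$ with $g_1'g_1''=g_2g_2''=g_3g_3'=\id$, which refines all four pairwise factorizations at once and also proves that the non-vanishing conditions ($(g,g')$, $(gg',g'')$ spanning with $I_{g,g'}\subseteq I_{g''}$ versus $(g',g'')$, $(g,g'g'')$ spanning with $I_{g',g''}\subseteq I_{g}$) are equivalent. Your plan contains no analogue of this common refinement, and without it the four $\overline c$'s cannot be compared term by term.

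Your proposed substitute --- factoring the identity over the Thom--Sebastiani blocks and checking single chain/loop polynomials --- does not obviously repair this, because the quantities entering $\overline c_{g,h}$ are not multiplicative over blocks: if $N-N_{g_2}=n_1+n_2$ splits over two blocks, then $(-1)^{\frac{1}{2}(n_1+n_2)(n_1+n_2-1)}$ differs from the product of the blockwise signs by the cross term $(-1)^{n_1n_2}$, and similarly the permutation signs $\widetilde{\varepsilon}$ mix blocks. Asserting that ``all cross-block contributions are governed by $\widetilde{\varepsilon}$ and match by its associativity'' is precisely the content that must be computed; in the paper this is the long explicit cancellation carried out with the nine-element decomposition. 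So the architecture of your argument is right, but the two load-bearing steps --- the simultaneous factorization of the triple and the verification that the quadratic signs, the phases $\epi\left[-\frac{1}{2}\age(\cdot)\right]$, and the $\widetilde{\varepsilon}$'s recombine identically --- are named rather than supplied.
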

\begin{proof}
First, we show the following
\begin{lemma}\label{lem:ggg}
For $g,g',g''\in G_f$, suppose that $(g,g')$ and $(gg',g'')$ are spanning pairs with $I_{g,g'}\subseteq I_{g''}$.
\begin{enumerate}
\item
There exist $g_1, g_2, g_3, g_1', g_2', g_3', g_1'', g_2'', g_3''\in G_f$ such that 
\begin{equation}
g=g_1g_2g_3,\ g'=g_1'g_2'g_3',\ g''=g_1''g_2''g_3'',\quad g_1'g_1''=\id, g_2g_2''=\id,\ g_3g_3'=\id,
\end{equation}
and $(g_1g_2,g_3,g_1'g_2',g_3')$ is the factorization of $(g,g')$ and 
$(g_1g_2',g_2g_1',g_3'',g_1''g_2'')$ is the facotrization of $(gg',g'')$.
\item
The pairs $(g',g'')$ and $(g,g'g'')$ are spanning such that $I_{g',g''}\subseteq I_g$.
\end{enumerate}
\end{lemma}
\begin{proof}
(i) Similarly to the presentation of \eqref{eq: gh explicit form},
the elements $g,g',g''$ satisfying the conditions can be expressed, in the multiplicative form, as follows: 
\begin{equation}
\begin{matrix}
g & = & g_1 & \cdot & g_2 & \cdot & \id & \cdot & \id & \cdot & g_3 & \cdot & \id \\
g' & = & \id & \cdot & \id & \cdot & g_1' & \cdot & g_2'& \cdot & g_3' & \cdot & \id \\
g''& = & \id & \cdot & g_2'' & \cdot & g_1'' & \cdot & \id & \cdot & \id & \cdot & g_3''
\end{matrix}.
\end{equation}
(ii) By the above presentation, it is easy to see that $(g,g')$ and $(gg',g'')$ are spanning pairs.
It follows from $g_1'g_1''=\id$ that $I_{g',g''}\subseteq I_g$. 
\end{proof}

\begin{lemma}
The LHS of \eqref{eq:associativity} is non-zero if and only if the RHS of \eqref{eq:associativity} is non-zero.
\end{lemma}
\begin{proof}
By Proposition~\ref{proposition: H_gh is a jac element} (iii), the LHS of \eqref{eq:associativity} is non-zero only 
if both pairs $(g,g')$ and $(gg',g'')$ are spanning and $I_{g,g'} \subseteq I_{g''}$ and the RHS of \eqref{eq:associativity} is non-zero only if both pairs $(g,g'g'')$ and $(g',g'')$ are spanning and $I_{g',g''} \subseteq I_{g}$.
Lemma~\ref{lem:ggg} together with Proposition~\ref{prop:Gtwistedcomm} yields the statement.
\end{proof}
\begin{lemma}
Let the notations be as above. 
We have 
\begin{equation}
H_{g,g'}=H_{g_3,g_3'},\ H_{gg',g''}=H_{g_2g_1',g_2''g_1''}, \ H_{g,g'g''}=H_{g_2g_3,g_2''g_3'},\ H_{g',g''}=H_{g_1',g_1''},
\end{equation}
and hence $\left[H_{g,g'} H_{gg',g''}\right] = \left[H_{g,g'g''}H_{g',g''}\right]$ in $\Jac(f^{gg'g''})$.
\end{lemma}
\begin{proof}
The first statement follows from the definition of $H_{g,h}$ and the second one does from Proposition~\ref{proposition: H_gh is a jac element} (ii).
\end{proof}
Therefore, we only have to show the following
\begin{lemma}
Let the notations be as above. we have
\begin{equation}
\overline c_{g,g'}\overline c_{gg',g''}=\overline c_{g,g'g''}\overline c_{g',g''}.
\end{equation}
\end{lemma}
\begin{proof}
It follows from the definition~\eqref{bar c} that 
\begin{align*}
&\overline c_{g,g'}=
(-1)^{\frac{1}{2}(N-N_{g_3})(N-N_{g_3}-1)}\cdot \epi \left[-\frac{1}{2}\age(g_3)\right]\cdot
\frac{\widetilde{\varepsilon}_{g_1g_2,g_3}\widetilde{\varepsilon}_{g_3',g_1'g_2'}}{\widetilde{\varepsilon}_{g_1g_2,g_1'g_2'}}, \\
&\overline c_{gg',g''}=
(-1)^{\frac{1}{2}(N-N_{g_2g_1'})(N-N_{g_2g_1'}-1)}\cdot \epi \left[-\frac{1}{2}\age(g_2g_1')\right]\cdot
\frac{\widetilde{\varepsilon}_{g_1g_2',g_2g_1'}\widetilde{\varepsilon}_{g_2''g_1'',g_3''}}{\widetilde{\varepsilon}_{g_1g_2',g_3''}}, \\
&\overline c_{g,g'g''}=
(-1)^{\frac{1}{2}(N-N_{g_2g_3})(N-N_{g_2g_3}-1)}\cdot \epi \left[-\frac{1}{2}\age(g_2g_3)\right]\cdot
\frac{\widetilde{\varepsilon}_{g_1,g_2g_3}\widetilde{\varepsilon}_{g_2'g_3',g_2''g_3''}}{\widetilde{\varepsilon}_{g_1,g_2'g_3''}},\\
&\overline c_{g',g''}=
(-1)^{\frac{1}{2}(N-N_{g_1'})(N-N_{g_1'}-1)}\cdot \epi \left[-\frac{1}{2}\age(g_1')\right]\cdot
\frac{\widetilde{\varepsilon}_{g_2'g_3',g_1'}\widetilde{\varepsilon}_{g_1'',g_2''g_3''}}{\widetilde{\varepsilon}_{g_2'g_3',g_2''g_3''}}.
\end{align*}
Since all $I_{g_i}^c$, $I_{g_i'}^c$ and $I_{g_i''}^c$ are mutually disjoint, we get 
\begin{align*}
\overline c_{g,g'}\overline c_{gg',g''}=&
(-1)^{\frac{1}{2}(N-N_{g_3})(N-N_{g_3}-1)+\frac{1}{2}(N-N_{g_2g_1'})(N-N_{g_2g_1'}-1)}\\
&\cdot \epi \left[-\frac{1}{2}\age(g_3)-\frac{1}{2}\age(g_2g_1')\right]\cdot
\frac{\widetilde{\varepsilon}_{g_1g_2,g_3}\widetilde{\varepsilon}_{g_3',g_1'g_2'}}{\widetilde{\varepsilon}_{g_1g_2,g_1'g_2'}}\frac{\widetilde{\varepsilon}_{g_1g_2',g_2g_1'}\widetilde{\varepsilon}_{g_2''g_1'',g_3''}}{\widetilde{\varepsilon}_{g_1g_2',g_3''}} \\
=&(-1)^{\frac{1}{2}(N-N_{g_3})(N-N_{g_3}-1)+\frac{1}{2}(N-N_{g_2}+N-N_{g_1'})(N-N_{g_2}+N-N_{g_1'}-1)}\\
&\cdot \epi \left[-\frac{1}{2}\age(g_3)-\frac{1}{2}\age(g_2)-\frac{1}{2}\age(g_1')\right]\\
&\cdot 
\frac{\widetilde{\varepsilon}_{g_1,g_2}\widetilde{\varepsilon}_{g_1,g_2,g_3}\widetilde{\varepsilon}_{g_3',g_1',g_2'}\widetilde{\varepsilon}_{g_1',g_2'}\widetilde{\varepsilon}_{g_1,g_2'}\widetilde{\varepsilon}_{g_1,g_2',g_1',g_2}\widetilde{\varepsilon}_{g_1',g_2}\widetilde{\varepsilon}_{g_1'',g_2''}\widetilde{\varepsilon}_{g_1'',g_2'',g_3''}}{\widetilde{\varepsilon}_{g_1,g_2}\widetilde{\varepsilon}_{g_1,g_2,g_1',g_2'}\widetilde{\varepsilon}_{g_1',g_2'}
\widetilde{\varepsilon}_{g_1,g_2'}\widetilde{\varepsilon}_{g_1,g_2',g_3''}}\\ 
=&(-1)^{\frac{1}{2}\left((N-N_{g_3})^2-(N-N_{g_3})+(N-N_{g_2})^2-(N-N_{g_2})+(N-N_{g_1'})^2-(N-N_{g_1'})+2(N-N_{g_2})(N-N_{g_1'})\right)}\\
&\cdot \epi \left[-\frac{1}{2}\age(g_3)-\frac{1}{2}\age(g_2)-\frac{1}{2}\age(g_1')\right]\\
&\cdot
\frac{\widetilde{\varepsilon}_{g_1,g_2,g_3}\widetilde{\varepsilon}_{g_3',g_1',g_2'}\widetilde{\varepsilon}_{g_1,g_2',g_1',g_2}\widetilde{\varepsilon}_{g_1',g_2}\widetilde{\varepsilon}_{g_1'',g_2''}\widetilde{\varepsilon}_{g_1'',g_2'',g_3''}}{\widetilde{\varepsilon}_{g_1,g_2,g_1',g_2'}\widetilde{\varepsilon}_{g_1,g_2',g_3''}},
\end{align*}
and
\begin{align*}
\overline c_{g,g'g''}\overline c_{g',g''}=&
(-1)^{\frac{1}{2}(N-N_{g_2g_3})(N-N_{g_2g_3}-1)+\frac{1}{2}(N-N_{g_1'})(N-N_{g_1'}-1)}\\
&\cdot \epi \left[-\frac{1}{2}\age(g_2g_3)-\frac{1}{2}\age(g_1')\right]\cdot
\frac{\widetilde{\varepsilon}_{g_1,g_2g_3}\widetilde{\varepsilon}_{g_2''g_3',g_2'g_3''}}{\widetilde{\varepsilon}_{g_1,g_2'g_3''}}\frac{\widetilde{\varepsilon}_{g_2'g_3',g_1'}\widetilde{\varepsilon}_{g_1'',g_2''g_3''}}{\widetilde{\varepsilon}_{g_2'g_3',g_2''g_3''}}\\
=&(-1)^{\frac{1}{2}(N-N_{g_2}+N-N_{g_3})(N-N_{g_2}+N-N_{g_3}-1)+\frac{1}{2}(N-N_{g_1'})(N-N_{g_1'}-1)}\\
&\cdot \epi \left[-\frac{1}{2}\age(g_3)-\frac{1}{2}\age(g_2)-\frac{1}{2}\age(g_1')\right]\\
&\cdot
\frac{\widetilde{\varepsilon}_{g_1,g_2,g_3}\widetilde{\varepsilon}_{g_2,g_3}\widetilde{\varepsilon}_{g_2'',g_3'}\widetilde{\varepsilon}_{g_2'',g_3',g_2',g_3''}\widetilde{\varepsilon}_{g_2',g_3''}\widetilde{\varepsilon}_{g_2',g_3'}\widetilde{\varepsilon}_{g_2',g_3',g_1'}\widetilde{\varepsilon}_{g_1'',g_2'',g_3''}\widetilde{\varepsilon}_{g_2'',g_3''}}{\widetilde{\varepsilon}_{g_1,g_2',g_3''}\widetilde{\varepsilon}_{g_2',g_3''}\widetilde{\varepsilon}_{g_2',g_3'}\widetilde{\varepsilon}_{g_2',g_3',g_2'',g_3''}\widetilde{\varepsilon}_{g_2'',g_3''}}\\
=&(-1)^{\frac{1}{2}\left((N-N_{g_3})^2-(N-N_{g_3})+(N-N_{g_2})^2-(N-N_{g_2})+(N-N_{g_1'})^2-(N-N_{g_1'})+2(N-N_{g_2})(N-N_{g_3})\right)}\\
&\cdot \epi \left[-\frac{1}{2}\age(g_3)-\frac{1}{2}\age(g_2)-\frac{1}{2}\age(g_1')\right]\\
&\cdot
\frac{\widetilde{\varepsilon}_{g_1,g_2,g_3}\widetilde{\varepsilon}_{g_2,g_3}\widetilde{\varepsilon}_{g_2'',g_3'}\widetilde{\varepsilon}_{g_2'',g_3',g_2',g_3''}\widetilde{\varepsilon}_{g_2',g_3',g_1'}\widetilde{\varepsilon}_{g_1'',g_2'',g_3''}}{\widetilde{\varepsilon}_{g_1,g_2',g_3''}\widetilde{\varepsilon}_{g_2',g_3',g_2'',g_3''}}.\\
\end{align*}
Therefore, we only have to show that
\begin{align*}
&\ (-1)^{(N-N_{g_2})(N-N_{g_1'})}\cdot
\frac{\widetilde{\varepsilon}_{g_1,g_2,g_3}\widetilde{\varepsilon}_{g_3',g_1',g_2'}\widetilde{\varepsilon}_{g_1,g_2',g_1',g_2}\widetilde{\varepsilon}_{g_1',g_2}\widetilde{\varepsilon}_{g_1'',g_2''}\widetilde{\varepsilon}_{g_1'',g_2'',g_3''}}{\widetilde{\varepsilon}_{g_1,g_2,g_1',g_2'}\widetilde{\varepsilon}_{g_1,g_2',g_3''}} \\
=&\ 
(-1)^{(N-N_{g_2})(N-N_{g_3})}\cdot
\frac{\widetilde{\varepsilon}_{g_1,g_2,g_3}\widetilde{\varepsilon}_{g_2,g_3}\widetilde{\varepsilon}_{g_2'',g_3'}\widetilde{\varepsilon}_{g_2'',g_3',g_2',g_3''}\widetilde{\varepsilon}_{g_2',g_3',g_1'}\widetilde{\varepsilon}_{g_1'',g_2'',g_3''}}{\widetilde{\varepsilon}_{g_1,g_2',g_3''}\widetilde{\varepsilon}_{g_2',g_3',g_2'',g_3''}}.
\end{align*}
Since $g_1'g_1''=\id$, $g_2g_2''=\id$ and $g_3g_3'=\id$, we have $I_{g_1'}^c=I_{g_1''}^c, I_{g_2}^c=I_{g_2''}^c$ and $I_{g_3}^c=I_{g_3'}^c$. 
We also have that $\widetilde{\varepsilon}_{\bullet}^2=1$ for any expression $\bullet$. 
Hence, the problem is reduced to show the following equation:
\begin{align*}
&\ (-1)^{(N-N_{g_2})(N-N_{g_1'})}\cdot
\frac{\widetilde{\varepsilon}_{g_1,g_2,g_3}\widetilde{\varepsilon}_{g_3,g_1',g_2'}
\widetilde{\varepsilon}_{g_1,g_2',g_1',g_2}\widetilde{\varepsilon}_{g_1',g_2,g_3''}}{\widetilde{\varepsilon}_{g_1,g_2,g_1',g_2'}\widetilde{\varepsilon}_{g_1,g_2',g_3''}} \\
=&\ 
(-1)^{(N-N_{g_2})(N-N_{g_3})}\cdot
\frac{\widetilde{\varepsilon}_{g_1,g_2,g_3}\widetilde{\varepsilon}_{g_2,g_3,g_2',g_3''}\widetilde{\varepsilon}_{g_2',g_3,g_1'}\widetilde{\varepsilon}_{g_1',g_2,g_3''}}{\widetilde{\varepsilon}_{g_1,g_2',g_3''}\widetilde{\varepsilon}_{g_2',g_3,g_2,g_3''}}.
\end{align*}
Recall also  that $\widetilde \varepsilon_\bullet$ is the signature of a permutation $\sigma_\bullet$ based on the expression $\bullet$ 
(see Definition~\ref{def: epsilon}), and hence we get a suitable sign by interchanging two indexes, for example, 
$\widetilde{\varepsilon}_{g_3,g_1',g_2'}=(-1)^{(N-N_{g_1'})(N-N_{g_2'})}\widetilde{\varepsilon}_{g_3,g_2',g_1'}$.
The LHS of the above equation is given by 
\begin{align*}
&\ (-1)^{(N-N_{g_2})(N-N_{g_1'})}\cdot
\frac{\widetilde{\varepsilon}_{g_1,g_2,g_3}\widetilde{\varepsilon}_{g_3,g_1',g_2'}
\widetilde{\varepsilon}_{g_1,g_2',g_1',g_2}\widetilde{\varepsilon}_{g_1',g_2,g_3''}}{\widetilde{\varepsilon}_{g_1,g_2,g_1',g_2'}\widetilde{\varepsilon}_{g_1,g_2',g_3''}} \\
=&\ (-1)^{(N-N_{g_2})(N-N_{g_1'})}\cdot\frac{\widetilde{\varepsilon}_{g_1,g_2,g_3}(-1)^{(N-N_{g_1'})(N-N_{g_2'})}\widetilde{\varepsilon}_{g_3,g_2',g_1'}\widetilde{\varepsilon}_{g_1',g_2,g_3''}}{\widetilde{\varepsilon}_{g_1,g_2',g_3''}}\\
&\quad \cdot\frac{(-1)^{(N-N_{g_2})(N-N_{g_1'})+(N-N_{g_2})(N-N_{g_2'})+(N-N_{g_2'})(N-N_{g_1'})}\widetilde{\varepsilon}_{g_1,g_2,g_1',g_2'}}{\widetilde{\varepsilon}_{g_1,g_2,g_1',g_2'}} \\
=&\ (-1)^{(N-N_{g_2})(N-N_{g_2'})}\cdot \frac{\widetilde{\varepsilon}_{g_1,g_2,g_3}\widetilde{\varepsilon}_{g_3,g_2',g_1'}
\widetilde{\varepsilon}_{g_1',g_2,g_3''}}{\widetilde{\varepsilon}_{g_1,g_2',g_3''}},
\end{align*}
while the RHS is given by 
\begin{align*}
&(-1)^{(N-N_{g_2})(N-N_{g_3})}\cdot
\frac{\widetilde{\varepsilon}_{g_1,g_2,g_3}\widetilde{\varepsilon}_{g_2,g_3,g_2',g_3''}\widetilde{\varepsilon}_{g_2',g_3,g_1'}\widetilde{\varepsilon}_{g_1',g_2,g_3''}}{\widetilde{\varepsilon}_{g_1,g_2',g_3''}\widetilde{\varepsilon}_{g_2',g_3,g_2,g_3''}}\\
=&\ (-1)^{(N-N_{g_2})(N-N_{g_3})}\cdot\frac{\widetilde{\varepsilon}_{g_1,g_2,g_3}(-1)^{(N-N_{g_2'})(N-N_{g_3})}\widetilde{\varepsilon}_{g_3,g_2',g_1'}\widetilde{\varepsilon}_{g_1',g_2,g_3''}}{\widetilde{\varepsilon}_{g_1,g_2',g_3''}}\\
&\quad \cdot\frac{(-1)^{(N-N_{g_2})(N-N_{g_3})+(N-N_{g_2})(N-N_{g_2'})+(N-N_{g_2'})(N-N_{g_3})}\widetilde{\varepsilon}_{g_2',g_3,g_2,g_3''}}{\widetilde{\varepsilon}_{g_2',g_3,g_2,g_3''}}\\
=&\ (-1)^{(N-N_{g_2})(N-N_{g_2'})}\cdot
 \frac{\widetilde{\varepsilon}_{g_1,g_2,g_3}\widetilde{\varepsilon}_{g_3,g_2',g_1'}\widetilde{\varepsilon}_{g_1',g_2,g_3''}}{\widetilde{\varepsilon}_{g_1,g_2',g_3''}},
\end{align*}
which coincides with the LHS.
\end{proof}
We have finished the proof of the proposition.
\end{proof}
Now, it is possible to equip $\A'$ with a structure of $\ZZ/2\ZZ$-graded $\CC$-algebra.
\begin{definition}
Define a $\CC$-bilinear map $\circ:\A'\otimes_\CC \A'\longrightarrow \A'$ by
setting, for each $g,h\in G_f$ and $\phi(\bx)$, $\psi(\bx)\in\CC[x_1,\dots, x_N]$,
\begin{equation}
\left([\phi(\bx)]\overline v_g\right) \circ\left([\psi(\bx)]\overline v_h\right)
:=\overline c_{g,h} \left[\phi(\bx)\psi(\bx)H_{g,h} \right] \overline v_{gh}.
\end{equation}
\end{definition}
It is easy to see that the map $\circ$ is well-defined by Proposition~\ref{proposition: H_gh is a jac element} (iii).
\begin{proposition}
The map $\circ$ equips $\A'$ with a structure of $\ZZ/2\ZZ$-graded $\CC$-algebra with the identity $\overline v_\id$, 
which satisfies axiom~\eqref{axiom_algebra}.
\end{proposition}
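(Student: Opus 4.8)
The plan is to verify the axioms of a $\ZZ/2\ZZ$-graded $\CC$-algebra one at a time, observing that each reduces to a statement already proved. Since $\circ$ is $\CC$-bilinear by construction and its well-definedness on the quotients $\Jac(f^g)$ has just been recorded via Proposition~\ref{proposition: H_gh is a jac element}~(iii), the remaining tasks are associativity, the existence of a two-sided unit, compatibility with the $\ZZ/2\ZZ$-grading, and finally axiom~\eqref{axiom_algebra}.

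First I would dispatch associativity. By $\CC$-bilinearity it suffices to treat homogeneous generators $X=[\phi]\overline v_g$, $Y=[\psi]\overline v_{g'}$, $Z=[\chi]\overline v_{g''}$. Expanding $(X\circ Y)\circ Z$ and $X\circ(Y\circ Z)$, the polynomial factors $\phi,\psi,\chi$ multiply in the polynomial ring and can be pulled out, so both sides take the form $[\phi\psi\chi]$ times an element of $\Jac(f^{gg'g''})$, and the desired identity collapses to
\[
\overline c_{g,g'}\,\overline c_{gg',g''}\,[H_{g,g'}H_{gg',g''}]
=\overline c_{g,g'g''}\,\overline c_{g',g''}\,[H_{g,g'g''}H_{g',g''}]
\]
in $\Jac(f^{gg'g''})$, the degenerate cases where one side vanishes being covered by the same argument. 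This is exactly what Proposition~\ref{prop:ass bar v} establishes, so associativity is immediate.

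Next I would check the grading and the unit. The inclusion $\A'_{\overline i}\circ \A'_{\overline j}\subseteq \A'_{\overline{i+j}}$ follows because $\A'_g\circ \A'_h\subseteq \A'_{gh}$ by the very definition of $\circ$, together with the congruence $(N-N_g)+(N-N_h)\equiv N-N_{gh}\ (\mathrm{mod}\ 2)$ of Proposition~\ref{prop: supergrading agrees}. For the unit, since $I_{\id,g}=I_{g,\id}=\emptyset$ one has $H_{\id,g}=H_{g,\id}=1$ and $\overline c_{\id,g}=\overline c_{g,\id}=1$, whence $\overline v_\id\circ([\psi]\overline v_g)=[\psi]\overline v_g=([\psi]\overline v_g)\circ \overline v_\id$; thus $\overline v_\id$ is a two-sided identity.

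Finally, axiom~\eqref{axiom_algebra} splits into two assertions, both now routine. The inclusion $\Jac'(f,g)\circ \Jac'(f,h)=\A'_g\circ \A'_h\subseteq \A'_{gh}$ is built into the definition of $\circ$. Moreover, since $H_{\id,\id}=1$ and $\overline c_{\id,\id}=1$, the product restricted to $\A'_\id=\Jac(f)\overline v_\id$ reads $([\phi]\overline v_\id)\circ([\psi]\overline v_\id)=[\phi\psi]\overline v_\id$, which is precisely the multiplication of $\Jac(f)$ under $[\phi]\mapsto[\phi]\overline v_\id$; hence the subalgebra $\A'_\id$ coincides with $\Jac(f)$. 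I expect the only genuine difficulty of the whole construction --- the sign and cocycle bookkeeping behind associativity --- to be entirely absorbed into Proposition~\ref{prop:ass bar v}, so that the present statement is a formal assembly of the established pieces, the only care needed being to keep the $\Jac(f^\bullet)$-coefficients and the parities $N-N_g$ straight.
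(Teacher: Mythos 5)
Your proposal is correct and follows essentially the same route as the paper: associativity is delegated to Proposition~\ref{prop:ass bar v}, the compatibility with the $\ZZ/2\ZZ$-grading to Proposition~\ref{prop: supergrading agrees}, and the unit and axiom~\eqref{axiom_algebra} are read off from the definition of $\circ$ together with $H_{\id,g}=H_{g,\id}=1$ and $\overline c_{\id,g}=\overline c_{g,\id}=1$. The only difference is that you spell out the reduction of associativity on general elements $[\phi]\overline v_g$ to the generator case, which the paper leaves implicit.
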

\begin{proof}
The associativity of the product follows from Proposition~\ref{prop:ass bar v}.
It is obvious by Proposition~\ref{prop: supergrading agrees} 
that $\A_{\overline i}'\circ\A_{\overline j}'\subset \A_{\overline {i+j}}'$ for all $\overline i,\overline j\in\ZZ/2\ZZ$.
It is also clear by the definition of the map $\circ$ above 
that the natural surjective maps $\Jac(f)\longrightarrow \Jac(f^g)$, $g\in G_f$ equip $\A'$ 
with a structure of $\Jac(f)$-module, which coincides with the product map $\circ:\A_\id'\otimes_\CC \A_g'\longrightarrow \A_g'$.
\end{proof}
Take a nowhere vanishing $N$-form $dx_1\wedge \dots\wedge dx_N$
and set $\zeta:=[dx_1\wedge \dots\wedge dx_N]\in\Omega_f$.
For each $g\in G_f$, let $\omega_g\in\Omega'_{f,g}$ be the residue class of $\widetilde \omega_g\in \Omega^{N_g}(\Fix(g))$ 
where
\begin{equation}\label{omega tilde}
\widetilde\omega_g:=
\begin{cases}
dx_{i_1}\wedge \dots \wedge dx_{i_{N_g}} & \text{if } I_g=(i_1,\dots, i_{N_g}),\  i_1<\dots <i_{N_g}\\
1_g & \text{if } I_g=\emptyset
\end{cases}.
\end{equation}
Obviously, we have $\omega_{\id}=\zeta$.
\begin{definition}
Define a $\CC$-bilinear map $\vdash:\A'\otimes_\CC \Omega'_{f,G_f}\longrightarrow \Omega'_{f,G_f}$ by
setting, for each $g,h\in G_f$ and $\phi(\bx)$, $\psi(\bx)\in\CC[x_1,\dots, x_N]$,
\begin{equation}
\left([\phi(\bx)]\overline v_g\right) \vdash \left([\psi(\bx)]\omega_h\right)
:=\frac{\overline\alpha_{gh}\overline c_{g,h}}{\overline\alpha_{h}} \left[\phi(\bx)\psi(\bx)H_{g,h} \right] \omega_{gh},
\end{equation}
where $\overline \alpha:G\longrightarrow \CC^\ast$, $g\mapsto \overline\alpha_g$ is a map given by 
\begin{equation}\label{alphag}
\overline \alpha_g:=\epi\left[\frac{1}{8}(N-N_g)(N-N_g+1)\right].
\end{equation}
\end{definition}
\begin{remark}
The map $\overline \alpha:G\longrightarrow \CC^\ast$ satisfies 
$\overline\alpha_\id=1$ and 
\begin{equation}\label{eq:aa}
\overline \alpha_g\overline\alpha_{g^{-1}}=(-1)^{\frac{1}{2}(N-N_g)(N-N_g+1)},\quad g\in G_f.
\end{equation} 
\end{remark}
The map $\vdash$ induces an isomorphism $\vdash\zeta:\A'\longrightarrow  \Omega'_{f,G_f}$ of $\ZZ/2\ZZ$-graded $\CC$-modules:
\begin{equation}
\vdash\zeta:\A_g'\longrightarrow  \Omega'_{f,g},\quad [\phi(\bx)]\overline v_g\mapsto [\phi(\bx)]\overline v_g\vdash\zeta=\overline\alpha_g[\phi(\bx)]\omega_g,
\end{equation}
Note that for each $g,h\in G_f$ and $\phi(\bx)$, $\psi(\bx)\in\CC[x_1,\dots, x_N]$ we have 
\begin{equation}
\left([\phi(\bx)]\overline v_g\right) \vdash \left([\psi(\bx)]\omega_h\right)
=\left(\left([\phi(\bx)]\overline v_g\right) \circ\left([\psi(\bx)]\overline v_h\right)\right)\vdash\zeta,
\end{equation}
by which we obtain the following
\begin{proposition}
The map $\vdash:\A'\otimes_\CC \Omega'_{f,G_f}\longrightarrow \Omega'_{f,G_f}$ satisfies axiom~\eqref{axiom_=JacOmega}
in Definition~\ref{axioms}.
\end{proposition}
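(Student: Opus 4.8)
The plan is to verify the three ingredients bundled into axiom~\eqref{axiom_=JacOmega}: that $\vdash$ defines a genuine $\A'$-module structure on $\Omega'_{f,G_f}$, that it restricts correctly in degree $\id$ (axiom~\eqref{axiom_idmodule}), and that $\vdash\zeta$ is an isomorphism (axiom~\eqref{axiom_isom}). Well-definedness of $\vdash$ rests on the same fact as that of $\circ$, namely that $[\phi\psi H_{g,h}]$ is a well-defined class in $\Jac(f^{gh})$ by Proposition~\ref{proposition: H_gh is a jac element}(iii), so it needs no separate argument.

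First I would establish the module identity $(X\circ Y)\vdash\omega = X\vdash(Y\vdash\omega)$ for $X\in\A'_g$, $Y\in\A'_h$ and $\omega\in\Omega'_{f,k}$. Writing $X=[\phi]\overline v_g$, $Y=[\psi]\overline v_h$, $\omega=[\chi]\omega_k$ and expanding both sides by the definitions of $\circ$ and $\vdash$, the left-hand side is $\frac{\overline\alpha_{ghk}}{\overline\alpha_k}\overline c_{g,h}\overline c_{gh,k}[\phi\psi\chi\,H_{g,h}H_{gh,k}]\omega_{ghk}$, while the right-hand side is $\frac{\overline\alpha_{hk}}{\overline\alpha_k}\cdot\frac{\overline\alpha_{ghk}}{\overline\alpha_{hk}}\overline c_{h,k}\overline c_{g,hk}[\phi\psi\chi\,H_{h,k}H_{g,hk}]\omega_{ghk}$. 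The key observation is that the normalization factors telescope to the common value $\frac{\overline\alpha_{ghk}}{\overline\alpha_k}$, so the desired identity reduces to $\overline c_{g,h}\overline c_{gh,k}[H_{g,h}H_{gh,k}]=\overline c_{h,k}\overline c_{g,hk}[H_{h,k}H_{g,hk}]$ in $\Jac(f^{ghk})$, which is exactly the associativity of $\circ$ proved in Proposition~\ref{prop:ass bar v}. The unit axiom $\overline v_\id\vdash\omega=\omega$ is immediate since $\overline c_{\id,k}=1$ and $H_{\id,k}=1$.

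Next I would dispatch axiom~\eqref{axiom_idmodule}. The inclusion $\A'_g\vdash\Omega'_{f,h}\subset\Omega'_{f,gh}$ is read off from the definition, the image being a multiple of $\omega_{gh}\in\Omega'_{f,gh}$. In degree $\id$ one computes $([\phi]\overline v_\id)\vdash([\chi]\omega_h)=[\phi\chi]\omega_h$, again using $\overline c_{\id,h}=1$ and $H_{\id,h}=1$; this is precisely multiplication by the image of $[\phi]$ under $\Jac(f)\to\Jac(f^h)$, i.e.\ the $\Jac(f)$-module structure furnished by Corollary~\ref{corollary: Jac module on omega fg}. Finally, axiom~\eqref{axiom_isom} is already in hand: $\vdash\zeta$ sends the free $\Jac(f^g)$-generator $\overline v_g$ of $\A'_g$ to $\overline\alpha_g\omega_g$ with $\overline\alpha_g\in\CC^\ast$, and $\omega_g$ freely generates $\Omega'_{f,g}$ over $\Jac(f^g)$, so $\vdash\zeta$ is a grading-preserving $\CC$-module isomorphism.

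I expect no serious obstacle, since all the genuine difficulty was absorbed into the associativity statement Proposition~\ref{prop:ass bar v}, leaving only bookkeeping. The one point that does require care is confirming that the constants $\overline\alpha$ cancel exactly, so that the module axiom collapses to honest associativity of $\circ$ rather than to associativity twisted by a leftover scalar; this is the telescoping identity $\frac{\overline\alpha_{hk}}{\overline\alpha_k}\cdot\frac{\overline\alpha_{ghk}}{\overline\alpha_{hk}}=\frac{\overline\alpha_{ghk}}{\overline\alpha_k}$ highlighted above.
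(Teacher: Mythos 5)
Your proof is correct and follows essentially the same route as the paper: the module action $\vdash$ is designed to intertwine with $\circ$ under the identification $\vdash\zeta$, so the module associativity collapses (after the telescoping of the $\overline\alpha$ factors, which you rightly single out as the one point needing care) to the associativity of $\circ$ established in Proposition~\ref{prop:ass bar v}, with the unit, degree-$\id$, and isomorphism checks being immediate from $\overline c_{\id,g}=1$, $H_{\id,g}=1$ and $\overline\alpha_g\in\CC^\ast$. The paper records only the intertwining identity and leaves the rest implicit, so your write-up is simply a more explicit version of the same argument.
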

On $\A'$ we have the action of $\varphi\in \Aut(f,G)$ induced by the isomorphism $\vdash\zeta:\A'\longrightarrow  \Omega'_{f,G_f}$, which is denoted by $\varphi^*$. 
We also use the notation of \eqref{conj-action}.
\begin{proposition}
Axiom~{\rm (iv)} is satisfied by $\A'$, namely,
axioms~\eqref{axiom_ringauto} and \eqref{axiom: G-twisted comm} hold.
\end{proposition}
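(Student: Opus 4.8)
The plan is to verify both halves of axiom~(iv) on the generators $\overline v_g$ and extend by linearity. The product $\circ$ is $\Jac(f)$-bilinear by construction, and the induced $\Aut(f,G_f)$-action is compatible with the $\Jac(f)$-module structure: being defined through $\vdash\zeta$ by pullback of forms, and every $\varphi$ satisfying $\varphi(f)=f$, one has $\varphi^*([\phi]\,\overline v_g)=[\varphi^*\phi]\,\varphi^*(\overline v_g)$. Hence it suffices to treat $X=\overline v_g$, $Y=\overline v_h$; if $(g,h)$ is not spanning both sides of each identity vanish, so I may assume $(g,h)$ spanning with factorization $(g_1,g_2,h_1,h_2)$.

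For axiom~\eqref{axiom: G-twisted comm} the key step is to compute $g^*(\overline v_h)$. Viewing $g$ as the diagonal matrix $\mathrm{diag}(\epi[g_i])$, its determinant gives $g^*(\zeta)=\epi\left[\age(g)\right]\zeta$, while on $\Fix(h)$ only the coordinates in $I_g^c\cap I_h$ acquire a nontrivial phase, whose sum is $\age(g_1)$, so $g^*(\omega_h)=\epi\left[\age(g_1)\right]\omega_h$. Substituting into $g^*(\overline v_h)\vdash g^*(\zeta)=\overline\alpha_h\,g^*(\omega_h)$ and using $\age(g)=\age(g_1)+\age(g_2)$ yields $g^*(\overline v_h)=\epi\left[-\age(g_2)\right]\overline v_h$. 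Plugging this into Proposition~\ref{prop:Gtwistedcomm} rewrites its identity as $\overline v_g\circ\overline v_h=(-1)^{(N-N_g)(N-N_h)}\,g^*(\overline v_h)\circ\overline v_g$, which is exactly axiom~\eqref{axiom: G-twisted comm}.

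For axiom~\eqref{axiom_ringauto} set $g'=\varphi\circ g\circ\varphi^{-1}$ and $h'=\varphi\circ h\circ\varphi^{-1}$. The same kind of computation gives $\varphi^*(\overline v_g)=\kappa_{\varphi,g}\,\overline v_{g'}$ with $\kappa_{\varphi,g}=d_g/d$, where $\varphi^*(\omega_g)=d_g\,\omega_{g'}$ and $\varphi^*(\zeta)=d\,\zeta$, using that $\overline\alpha_{g'}=\overline\alpha_g$ depends only on $N-N_g=N-N_{g'}$. Expanding $\varphi^*(\overline v_g\circ\overline v_h)=\varphi^*(\overline v_g)\circ\varphi^*(\overline v_h)$ then reduces the axiom to the single relation
\begin{equation*}
\overline c_{g,h}\,\kappa_{\varphi,gh}\,[\varphi^*H_{g,h}]=\kappa_{\varphi,g}\,\kappa_{\varphi,h}\,\overline c_{g',h'}\,[H_{g',h'}]\quad\text{in }\Jac(f^{g'h'}).
\end{equation*}
I would check this on a generating set of $\Aut(f,G_f)$. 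For a diagonal symmetry $\varphi=\mathrm{diag}(\lambda_i)$ one has $g'=g$, $h'=h$, and $\varphi^*H_{g,h}=\prod_{i\in I_{g,h}}\lambda_i^{-2}\,H_{g,h}$; a direct count over the blocks $I_g\cap I_h^c$, $I_g\cap I_h$, $I_g^c\cap I_h$ and $I_{g,h}$ shows $\prod_{i\in I_g}\lambda_i\cdot\prod_{i\in I_h}\lambda_i=\prod_{i\in I_{gh}}\lambda_i\cdot\prod_{i\in I_{g,h}}\lambda_i^{-2}\cdot\det(\varphi)$, which is exactly the relation with $\overline c_{g,h}=\overline c_{g',h'}$. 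For a coordinate permutation respecting the chain/loop structure, conjugation carries the factorization of $(g,h)$ to that of $(g',h')$ and preserves $\age$, so the scalar prefactors of $\overline c$ agree and what remains is a matching of signs.

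The principal obstacle is this sign bookkeeping in the permutation case: one must show that the permutation signs carried by $\kappa_{\varphi,g}$, $\kappa_{\varphi,h}$, $\kappa_{\varphi,gh}$ and by $[\varphi^*H_{g,h}]$ relative to $[H_{g',h'}]$ assemble into precisely the ratio $\widetilde\varepsilon_{g_1,g_2}\widetilde\varepsilon_{h_2,h_1}/\widetilde\varepsilon_{g_1,h_1}$ of the combinatorial signs of Definition~\ref{def: epsilon} that relates $\overline c_{g,h}$ and $\overline c_{g',h'}$ through~\eqref{bar c}. This matching is exactly what the signs $\widetilde\varepsilon$ were designed to encode; once the diagonal and permutation generators are handled, a general $\varphi\in\Aut(f,G_f)$ follows by multiplicativity of the action.
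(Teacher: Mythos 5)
Your treatment of axiom~\eqref{axiom: G-twisted comm} is correct and is essentially the paper's: the computation $g^\ast(\overline v_h)=\epi[-\age(g_2)]\,\overline v_h$ via $g^\ast(\zeta)=\epi[\age(g)]\zeta$ and $g^\ast(\omega_h)=\epi[\age(g_1)]\omega_h$, combined with Proposition~\ref{prop:Gtwistedcomm}, is exactly the argument given there. Likewise your reduction of axiom~\eqref{axiom_ringauto} to the single relation $\overline c_{g,h}\,\kappa_{\varphi,gh}\,[\varphi^*H_{g,h}]=\kappa_{\varphi,g}\kappa_{\varphi,h}\,\overline c_{g',h'}\,[H_{g',h'}]$ on the generators $\overline v_g$ matches the paper, and your diagonal-symmetry check is a correct special case.

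The gap is in how you propose to verify that relation for all of $\Aut(f,G_f)$. First, $\Aut(f,G_f)$ as defined in Definition~\ref{def_Aut} consists of \emph{all} $\CC$-algebra automorphisms of $\CC[x_1,\dots,x_N]$ preserving $f$ and normalizing the group; it is not generated by diagonal symmetries and coordinate permutations. For example, for the chain $f=x^2y+y^2$ the non-monomial (indeed non-linear) map $\varphi(x)=x$, $\varphi(y)=-y-x^2$ satisfies $\varphi(f)=f$ and commutes with the generator of $G_f$, so it lies in $\Aut(f,G_f)$ but is reached by neither of your generator types. So "check on generators, then use multiplicativity" does not cover the group. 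Second, even within your scheme, the permutation case --- which you yourself flag as the principal obstacle --- is precisely the technical heart and is left unproved; asserting that the $\widetilde{\varepsilon}$'s "were designed to encode" the matching is not an argument. The paper avoids both problems by treating an arbitrary $\varphi$ at once: since a polynomial automorphism has constant Jacobian (and restricts to bi-regular maps $\Fix(g_i')\to\Fix(g_i)$), one gets well-defined scalars $\lambda_\varphi,\lambda_{\varphi_{g_i}},\lambda_{\varphi_{h_i}}$ with $\varphi^\ast(\widetilde\omega_{g_i})=\lambda_{\varphi_{g_i}}\widetilde\omega_{g_i'}$; a poly-vector-field lemma then gives $\varphi^*(\omega_g)=\frac{\lambda_{\varphi_{g_1}}\lambda_{\varphi_{g_2}}}{\lambda_\varphi}\cdot\frac{\widetilde{\varepsilon}_{g_1,g_2}}{\widetilde{\varepsilon}_{g_1',g_2'}}\,\omega_{g'}$ (the sign identity being $\widetilde{\varepsilon}_{g_1}\widetilde{\varepsilon}_{g_2}/\widetilde{\varepsilon}_{g}=(-1)^{(N-N_{g_1})(N-N_{g_2})}$, manifestly conjugation-invariant), together with $[\varphi^\ast H_{g,h}]=\frac{\lambda_{\varphi_{g_2}}^2}{\lambda_\varphi^2}[H_{g',h'}]$ and $\lambda_{\varphi_{g_2}}=\lambda_{\varphi_{h_2}}$, and these assemble into \eqref{bar c} without ever decomposing $\varphi$. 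You would need to supply an argument of this uniform kind to close the proof.
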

\begin{proof}
Let $(g,h)$ be a spanning pair of elements of $G_f$ with the factorization $(g_1,g_2,h_1,h_2)$ 
and $\varphi$ an element of $\Aut(f,G)$.
For simplicity, set $g':=\varphi\circ g\circ \varphi^{-1}$, $h':=\varphi\circ h\circ \varphi^{-1}$, 
$g_i':=\varphi\circ g_i\circ \varphi^{-1}$ 
and $h_i':=\varphi\circ h_i\circ \varphi^{-1}$ for $i=1,2$.
Note that the pair $(g',h')$ is a spanning pair with the factorization $(g_1',g_2',h_1',h_2')$
since $\varphi$ is a $\CC$-algebra automorphism of $\CC[x_1,\dots, x_N]$, 
which induces a bi-regular map $\varphi:\left(\Fix(g_i')\right)\longrightarrow \Fix(g_i)$. 
It also follows that there exist $\lambda_\varphi, \lambda_{\varphi_{g_i}}, \lambda_{\varphi_{h_i}}\in\CC^\ast$, $i=1,2$ such that  
\[
\varphi^\ast(\widetilde \omega_\id)=\lambda_{\varphi}\widetilde \omega_\id,\quad 
\varphi^\ast(\widetilde \omega_{g_i})=\lambda_{\varphi_{g_i}}\widetilde \omega_{g_i'},\ 
\varphi^\ast(\widetilde \omega_{h_i})=\lambda_{\varphi_{h_i}}\widetilde \omega_{h_i'},\ i=1,2,
\]
(see \eqref{omega tilde} for the definition of $\widetilde \omega_{g}$)
and that, by~\eqref{alphag}, 
$\overline\alpha_{g'}=\overline\alpha_g$, $\overline\alpha_{h'}=\overline\alpha_h$, $\overline\alpha_{g_i'}=\overline\alpha_{g_i}$ and $\overline\alpha_{h_i'}=\overline\alpha_{h_i}$ for $i=1,2$. 
For each $\phi(\bx)\in\CC[x_1,\dots, x_N]$, we have 
\[
\varphi^\ast([\phi(\bx)]\overline v_g)=[\varphi^\ast\phi(\bx)]\varphi^\ast(\overline v_g),
\]
since 
\begin{align*}
&\varphi^\ast([\phi(\bx)]\overline v_g)\vdash\varphi^\ast(\zeta)
=\varphi^\ast([\phi(\bx)]\overline v_g\vdash \zeta)
=\varphi^\ast(\overline \alpha_g[\phi(\bx)]\omega_g)\\
&=\overline \alpha_g[\varphi^\ast\phi(\bx)]\varphi^\ast(\omega_g)
=\frac{\overline \alpha_g}{\overline\alpha_{g'}}
\left([\varphi^\ast\phi(\bx)]\varphi^\ast(\overline v_g)\right)\vdash\varphi^\ast(\zeta)
=\left([\varphi^\ast\phi(\bx)]\varphi^\ast(\overline v_g)\right)\vdash\varphi^\ast(\zeta).
\end{align*}
Therefore, we only need to show that 
$\varphi^\ast(\overline v_g)\circ \varphi^\ast(\overline v_h)=\varphi^\ast(\overline v_g\circ \overline v_h)$.
It easily follows that 
\[
\varphi^\ast(\overline v_\id)=\overline v_\id,\quad 
\varphi^\ast(\overline v_{g_i})=\frac{\lambda_{\varphi_{g_i}}}{\lambda_{\varphi}}\overline v_{g_i'}, \quad 
\varphi^\ast(\overline v_{h_i})=\frac{\lambda_{\varphi_{h_i}}}{\lambda_{\varphi}}\overline v_{h_i'}, 
\ i=1,2,
\] 
since 
$\varphi^\ast(\overline v_\id)\vdash \varphi^\ast(\zeta)=\varphi^\ast(\overline v_\id\vdash\zeta)=\varphi^\ast(\zeta)$ 
and 
\begin{align*}
&(\lambda_{\varphi_{g_i}}\overline v_{g_i'})\vdash \zeta
=\lambda_{\varphi_{g_i}}\overline\alpha_{g_i'}\omega_{g_i'}
=\varphi^\ast(\overline\alpha_{g_i}\omega_{g_i})
=\varphi^\ast(\overline v_{g_i})\vdash\varphi^\ast(\zeta)
=\left(\lambda_{\varphi}\varphi^\ast(\overline v_{g_i})\right)\vdash \zeta,\\
&(\lambda_{\varphi_{h_i}}\overline v_{h_i'})\vdash \zeta
=\lambda_{\varphi_{h_i}}\overline\alpha_{h_i'}\omega_{h_i'}
=\varphi^\ast(\overline\alpha_{h_i}\omega_{h_i})
=\varphi^\ast(\overline v_{h_i})\vdash\varphi^\ast(\zeta)
=\left(\lambda_{\varphi}\varphi^\ast(\overline v_{h_i})\right)\vdash \zeta.
\end{align*}
\begin{lemma}
We have
\begin{equation}
\varphi^*(\omega_g)=\frac{\lambda_{\varphi_{g_1}}\lambda_{\varphi_{g_2}}}{\lambda_\varphi}\cdot
\frac{\widetilde{\varepsilon}_{g_1,g_2}}{\widetilde{\varepsilon}_{g_1',g_2'}}\cdot\omega_{g'},\quad 
\varphi^*(\omega_h)=\frac{\lambda_{\varphi_{h_1}}\lambda_{\varphi_{h_2}}}{\lambda_\varphi}\cdot
\frac{\widetilde{\varepsilon}_{h_1,h_2}}{\widetilde{\varepsilon}_{h_1',h_2'}}\cdot\omega_{h'},
\end{equation}
which implies 
\begin{equation}
\varphi^*(\overline v_g)=\frac{\lambda_{\varphi_{g_1}}\lambda_{\varphi_{g_2}}}{\lambda_\varphi^2}\cdot
\frac{\widetilde{\varepsilon}_{g_1',g_2'}}{\widetilde{\varepsilon}_{g_1,g_2}}\cdot\overline v_{g'},\quad 
\varphi^*(\overline v_h)=\frac{\lambda_{\varphi_{h_1}}\lambda_{\varphi_{h_2}}}{\lambda_\varphi^2}\cdot
\frac{\widetilde{\varepsilon}_{h_1',h_2'}}{\widetilde{\varepsilon}_{h_1,h_2}}\cdot\overline v_{h'}.
\end{equation}
\end{lemma}
\begin{proof}
Let $\T_{\CC^N}$ be the tangent sheaf on $\CC^N$.
For each $g''\in G_f$, define a poly-vector field $\widetilde\theta_{g''}\in\Gamma\left(\CC^N,\wedge^{N-N_{g''}}\T_{\CC^N}\right)$ by 
\[
\widetilde\theta_{g''}:=
\begin{cases}
\frac{\partial}{\partial x_{j_1}}\wedge \dots \wedge \frac{\partial}{\partial x_{j_{N-N_{g''}}}} & \text{if } I_{g''}^c=(j_1,\dots, j_{N-N_{g''}}),\  j_1<\dots <j_{N-N_{g''}}\\
1 & \text{if } I_{g''}^c=\emptyset
\end{cases}.
\]
Since we have $\varphi^\ast(\widetilde \omega_\id)=\lambda_{\varphi}\widetilde \omega_\id$ and 
$\varphi^\ast(\widetilde \omega_{g_i})=\lambda_{\varphi_{g_i}}\widetilde \omega_{g_i'}$ for $i=1,2$, 
the poly-vector field $\widetilde\theta_{g_i}$ transforms under $\varphi$ as 
\[
\widetilde\theta_{g_i}\mapsto \frac{\lambda_{\varphi_{g_i}}}{\lambda_\varphi}
\cdot \frac{\widetilde \varepsilon_{g_i}}{\widetilde \varepsilon_{g_i'}}\cdot \widetilde\theta_{g_i'},\quad i=1,2,
\]
where $\widetilde \varepsilon_{g_i}$ is the signature of the permutation $I_\id\longrightarrow I_{g_i}^c\sqcup I_{g_i}$
and $\widetilde \varepsilon_{g_i'}$ is the signature of the permutation $I_\id\longrightarrow I_{g_i'}^c\sqcup I_{g_i'}$.
Suppose that $\varphi^*(\omega_g)=\lambda_{\varphi_g}\omega_{g'}$ for some $\lambda_{\varphi_g}\in\CC^\ast$ 
and let $\widetilde \varepsilon_g$ be the signature of the permutation $I_\id\longrightarrow I_g^c\sqcup I_g$
and $\widetilde \varepsilon_{g'}$ be signature of the permutation $I_\id\longrightarrow I_{g'}^c\sqcup I_{g'}$.
Then,  $\widetilde\theta_g$ transforms under $\varphi$ as 
\[
\widetilde\theta_g\mapsto \frac{\lambda_{\varphi_g}}{\lambda_\varphi}\cdot 
\frac{\widetilde \varepsilon_{g}}{\widetilde \varepsilon_{g'}}\cdot \widetilde\theta_{g'},
\]
Note that $\widetilde\theta_g=\widetilde \varepsilon_{g_1,g_2}\widetilde\theta_{g_1}\wedge \widetilde\theta_{g_2}$ 
and $\widetilde\theta_{g'}=\widetilde \varepsilon_{g_1',g_2'}\widetilde\theta_{g_1'}\wedge \widetilde\theta_{g_2'}$.
Hence, we have 
\[
\frac{\lambda_{\varphi_g}}{\lambda_\varphi}\cdot
\frac{\widetilde \varepsilon_{g}\widetilde \varepsilon_{g_1',g_2'}}
{\widetilde \varepsilon_{g'}\widetilde \varepsilon_{g_1,g_2}}
=
\frac{\lambda_{\varphi_{g_1}}\lambda_{\varphi_{g_2}}}{\lambda_\varphi^2}\cdot \frac{\widetilde \varepsilon_{g_1}\widetilde \varepsilon_{g_2}}{\widetilde \varepsilon_{g_1'}\widetilde \varepsilon_{g_2'}}.
\]
Therefore, the statement is reduced to show that 
\[
\frac{\widetilde \varepsilon_{g_1}\widetilde \varepsilon_{g_2}}{\widetilde \varepsilon_{g}}
=
\frac{\widetilde \varepsilon_{g_1'}\widetilde \varepsilon_{g_2'}}{\widetilde \varepsilon_{g'}}.
\]
However, by calculating the number of elements less than $j$ in the sequences 
$I_{g_1}^c$, $I_{g_2}^c$ and $I_g^c$ for each element $j$ in $I_{g_1}^c$ or $I_{g_2}^c$,  
it turns out that the LHS of the above equation is equal to $(-1)^{(N-N_{g_1})(N-N_{g_2})}$.
Similarly, the RHS is equal to $(-1)^{(N-N_{g_1'})(N-N_{g_2'})}$. 
They coincide since we have $N_{g_1}=N_{g_1'}$ and $N_{g_2}=N_{g_2'}$.
\end{proof}
\begin{lemma}
We have 
\begin{equation}
\left[\varphi^*H_{g,h}\right]=\frac{\lambda_{\varphi_{g_2}}^2}{\lambda_{\varphi}^2}[H_{g',h'}].
\end{equation}
\end{lemma}
\begin{proof}
Recall Definition~\ref{definition: Hgh}, where 
$H_{g,h}$ is defined as a non-zero constant multiple of $\det \left(\frac{\partial^2 f}{\partial x_{i} \partial x_{j}}\right)_{i,j\in I_{g,h}}$.
Now, $I_{g,h}=I_{g_2}^c=I_{h_2}^c$, $I_{g',h'}=I_{g_2'}^c=I_{h_2'}^c$.
This is nothing but the transformation rule of the determinant under the automorphism $\varphi$.
\end{proof}
Since $g_2h_2=\id$ and $g_2'h_2'=\id$ by definition of the factorizations,
\[
N_{g_2}=N_{h_2}=N_{h_2'}=N_{g_2'},\quad \lambda_{\varphi_{g_2}}=\lambda_{\varphi_{h_2}},
\]
where we identify $\omega_{h_2}$ with $\omega_{g_2}$ under $\Omega_{f,h_2}=\Omega_{f,g_2}$
and $\omega_{h_2'}$ with $\omega_{g_2'}$ under $\Omega_{f,h_2'}=\Omega_{f,g_2'}$. 
By the above lemma, it follows that 
\begin{align*}
&\varphi^\ast(\overline v_g)\circ \varphi^\ast(\overline v_h)\\
=&\ \frac{\lambda_{\varphi_{g_1}}\lambda_{\varphi_{g_2}}\lambda_{\varphi_{h_1}}\lambda_{\varphi_{h_2}}}{\lambda_{\varphi}^4}\cdot
\frac{\widetilde{\varepsilon}_{g_1,g_2}}{\widetilde{\varepsilon}_{g_1',g_2'}}\cdot
\frac{\widetilde{\varepsilon}_{h_1,h_2}}{\widetilde{\varepsilon}_{h_1',h_2'}}\cdot \overline v_{g'}\circ \overline v_{h'}\\
=&\ \frac{\lambda_{\varphi_{g_1}}\lambda_{\varphi_{g_2}}\lambda_{\varphi_{h_1}}\lambda_{\varphi_{h_2}}}{\lambda_{\varphi}^4}\cdot
\frac{\widetilde{\varepsilon}_{g_1,g_2}}{\widetilde{\varepsilon}_{g_1',g_2'}}\cdot
\frac{\widetilde{\varepsilon}_{h_1,h_2}}{\widetilde{\varepsilon}_{h_1',h_2'}}\\
&\quad \cdot (-1)^{\frac{1}{2}(N-N_{g_2'})(N-N_{g_2'}-1)}\cdot \epi \left[-\frac{1}{2}\age(g_2')\right]\cdot 
\frac{\widetilde{\varepsilon}_{g_1',g_2'}\widetilde{\varepsilon}_{h_2',h_1'}}{\widetilde{\varepsilon}_{g_1',h_1'}}
\cdot\left[H_{g',h'}\right]\overline v_{g'h'}\\
=&\ (-1)^{\frac{1}{2}(N-N_{g_2})(N-N_{g_2}-1)}\cdot \epi \left[-\frac{1}{2}\age(g_2)\right]\cdot 
\frac{\widetilde{\varepsilon}_{g_1,g_2}\widetilde{\varepsilon}_{h_2,h_1}}{\widetilde{\varepsilon}_{g_1,h_1}}\\
&\quad \cdot\left(\frac{\lambda_{\varphi_{g_2}}^2}{\lambda_{\varphi}^2} \left[H_{g',h'}\right]\right)\left(\frac{\lambda_{\varphi_{g_1}}\lambda_{\varphi_{h_1}}}{\lambda_\varphi^2}\cdot 
\frac{\widetilde{\varepsilon}_{g_1,h_1}}{\widetilde{\varepsilon}_{g_1',h_1'}}\overline v_{g'h'}\right)\\
=&\ c_{g,h}\left[\varphi^\ast H_{g,h}\right]\varphi^\ast(\overline v_{gh})=\varphi^\ast(\overline v_g\circ \overline v_h),
\end{align*}
where we also used that 
\[
\widetilde{\varepsilon}_{h_1,h_2}=(-1)^{(N-N_{h_1})(N-N_{h_2})}\widetilde{\varepsilon}_{h_2,h_1},\quad
\widetilde{\varepsilon}_{h_1',h_2'}=(-1)^{(N-N_{h_1'})(N-N_{h_2'})}\widetilde{\varepsilon}_{h_2',h_1'}.
\]
Hence, we proved the algebra structure $\circ$ of $\A'$ is $\Aut(f,G)$-invariant.
The $G$-twisted $\ZZ/2\ZZ$-graded commutativity, axiom~\eqref{axiom: G-twisted comm}, is a direct consequence of Proposition~\ref{prop:Gtwistedcomm} 
since $H_{g,h}=H_{h,g}$ and $g^\ast(\overline v_h)=\epi[-\age(g_2)]\cdot \overline v_h$ which follows from the calculation
\begin{align*}
&g^\ast(\overline v_h)\vdash \zeta
=g^\ast(\overline v_h)\vdash \left(\epi\left[-\age(g)\right]g^\ast(\zeta)\right)
=\epi\left[-\age(g)\right]\cdot g^\ast(\overline \alpha_h \omega_h)\\
&\quad =\epi\left[-\age(g_2)\right]\cdot (\overline \alpha_h \omega_h)
=\left(\epi[-\age(g_2)]\cdot \overline v_h\right)\vdash \zeta.
\end{align*}
We have finished the proof of the proposition.
\end{proof}
We show the invariance of the bilinear form $J_{f,G}$ with respect to the product structure of $\A'$.
We use the notation in Definition~\ref{definition: Hgh}.
\begin{proposition}
For a spanning pair $(g,h)$ of elements of $G_f$, we have
\begin{multline}\label{eq:44}
J_{f,gh}\left(\overline v_g\vdash \omega_h,\left[\frac{1}{\mu_{f^{g\cap h}}}\hess(f^{g\cap h})\right]\omega_{(gh)^{-1}}\right)\\
=(-1)^{(N-N_g)(N-N_h)} 
J_{f,h}\left(\omega_h, \left((h^{-1})^\ast\overline v_g\right)\vdash\left(\left[\frac{1}{\mu_{f^{g\cap h}}}\hess(f^{g\cap h})\right]\omega_{(gh)^{-1}}\right)\right). 
\end{multline}
As a consequence, the algebra $\A'$ satisfies axiom~{\rm (v)}. 
\end{proposition}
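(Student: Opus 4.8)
The plan is to verify the displayed identity \eqref{eq:44} by evaluating both sides to explicit scalars and comparing them, and then to deduce the full axiom~\eqref{axiom_J_fG} from it by a bilinearity-and-socle reduction. Throughout I would use the explicit module action $([\phi]\overline v_g)\vdash([\psi]\omega_h)=\frac{\overline\alpha_{gh}\overline c_{g,h}}{\overline\alpha_h}[\phi\psi H_{g,h}]\omega_{gh}$ and the perfectness of each block $J_{f,g}\colon\Omega'_{f,g}\otimes\Omega'_{f,g^{-1}}\to\CC$. First I would dispose of the non-spanning case: if $(g,h)$ is not spanning then $\overline v_g\vdash\omega_h=0$ by Proposition~\ref{prop: sum of the fixes}, and the right-hand side of axiom~\eqref{axiom_J_fG} vanishes too, since $((h^{-1})^\ast\overline v_g)\vdash\omega'$ is a scalar multiple of $\overline v_g\vdash\omega'$. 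For the spanning case I would reduce axiom~\eqref{axiom_J_fG} to \eqref{eq:44}: by $\CC$-trilinearity and the free rank-one $\Jac(f)$-module structure it suffices to take $X=\overline v_g$, $\omega=\omega_h$ and $\omega'\in\Omega'_{f,(gh)^{-1}}$ (the only block pairing nontrivially against $\overline v_g\vdash\omega_h\in\Omega'_{f,gh}$). Writing $\omega'=[\chi]\omega_{(gh)^{-1}}$ and moving $[\chi]$ across the residue pairing, Proposition~\ref{proposition: H_gh is a jac element}(iii) (that $x_jH_{g,h}=0$ for $j\in I_{g,h}$) together with the Thom--Sebastiani splitting of $\Jac(f^{gh})$ along $I_{gh}=I_{g,h}\sqcup(I_g\cap I_h)$ shows that both sides, as functionals of $\chi$, depend only on the coefficient of $[\hess(f^{g\cap h})]$ in the $\Jac(f^{g\cap h})$-factor; hence testing against $\chi=\frac{1}{\mu_{f^{g\cap h}}}\hess(f^{g\cap h})$, which is exactly \eqref{eq:44}, is enough.

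For the left-hand side of \eqref{eq:44} I would substitute $\overline v_g\vdash\omega_h=\frac{\overline\alpha_{gh}\overline c_{g,h}}{\overline\alpha_h}[H_{g,h}]\omega_{gh}$, pull $[H_{g,h}]$ into the second slot, and invoke the normalization \eqref{H and hessians}, namely $[H_{g,h}\hess(f^{g\cap h})]=\frac{\mu_{f^{g\cap h}}}{\mu_{f^{gh}}}[\hess(f^{gh})]$, followed by the residue evaluation $J_{f,gh}(\omega_{gh},[\hess(f^{gh})]\omega_{(gh)^{-1}})=(-1)^{N-N_{gh}}\epi\left[-\frac{1}{2}\age(gh)\right]|G|\,\mu_{f^{gh}}$ (the orbifold analogue of the property $J_f([dx_1\wedge\dots\wedge dx_N],[\hess(f)\,dx_1\wedge\dots\wedge dx_N])=\mu_f$ from \cite{AGV85}). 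This collapses the left-hand side to $\frac{\overline\alpha_{gh}\overline c_{g,h}}{\overline\alpha_h}(-1)^{N-N_{gh}}\epi\left[-\frac{1}{2}\age(gh)\right]|G|$.

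For the right-hand side I would first compute the scalar by which $h^{-1}$ acts. A diagonal-symmetry computation identical in spirit to the one establishing $g^\ast(\overline v_h)=\epi[-\age(g_2)]\overline v_h$ in the proof of axiom~(iv) gives $(h^{-1})^\ast\overline v_g=\epi[-\age(g_2)]\overline v_g$, using $\det(h^{-1})$, the pull-back of $\omega_g$, and $\age(g_2)+\age(h_2)=N-N_{g_2}\in\ZZ$. Then I would apply $\vdash$ to the Hessian element, noting $g\cdot(gh)^{-1}=h^{-1}$ and $\Fix(g)\cap\Fix((gh)^{-1})=\Fix(g)\cap\Fix(h)$, use the companion normalization $\frac{1}{\mu_{f^{g\cap h}}}[\hess(f^{g\cap h})H_{g,(gh)^{-1}}]=\frac{1}{\mu_{f^h}}[\hess(f^h)]$ in $\Jac(f^h)$ (again an instance of \eqref{H and hessians}), and evaluate $J_{f,h}(\omega_h,[\hess(f^h)]\omega_{h^{-1}})$ by the same residue formula. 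The equality $\text{LHS}=\text{RHS}$ then reduces to the purely arithmetic identity
\[
\frac{\overline\alpha_{gh}}{\overline\alpha_h}\,\overline c_{g,h}\,(-1)^{N-N_{gh}}\epi\left[-\tfrac12\age(gh)\right]
=(-1)^{(N-N_g)(N-N_h)}\epi\left[-\age(g_2)\right]\frac{\overline\alpha_{h^{-1}}}{\overline\alpha_{(gh)^{-1}}}\,\overline c_{g,(gh)^{-1}}\,(-1)^{N-N_h}\epi\left[-\tfrac12\age(h)\right],
\]
to be checked by unfolding definition \eqref{bar c} of $\overline c$ for the two relevant factorizations, the relation \eqref{eq:aa} for $\overline\alpha_g\overline\alpha_{g^{-1}}$, the cocycle relations for $\widetilde\varepsilon$ from Definition~\ref{def: epsilon}, and the parity statement of Proposition~\ref{prop: supergrading agrees}.

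I expect the main obstacle to be twofold. The more delicate conceptual point is justifying the socle reduction that lets the single test element $\frac{1}{\mu_{f^{g\cap h}}}\hess(f^{g\cap h})$ stand in for an arbitrary $\omega'$; this rests on Proposition~\ref{proposition: H_gh is a jac element}(iii) and the one-dimensionality of the socle of each Thom--Sebastiani factor, and a parallel argument must be run on the right-hand side to see that it factors through the same socle coefficient. The more laborious point is the final scalar identity, where the bookkeeping of roots of unity and signs — the interplay of $\age(g_2)$, the half-ages, the $\overline\alpha$-factors and the $\widetilde\varepsilon$-signs — is exactly where errors are easiest to make. Once \eqref{eq:44} is established, the concluding sentence of the proposition is immediate: under the isomorphism of axiom~\eqref{axiom_isom}, \eqref{eq:44} is precisely the invariance statement of axiom~\eqref{axiom_J_fG}, so $\A'$ satisfies axiom~(v).
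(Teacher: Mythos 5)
Your proposal is correct and follows essentially the same route as the paper: both sides of \eqref{eq:44} are evaluated explicitly via the module action, the normalization \eqref{H and hessians} (applied to the pair $(g,h)$ on the left and to $(g,(gh)^{-1})$ on the right), the identity $(h^{-1})^\ast\overline v_g=\epi\left[-\age(g_2)\right]\overline v_g$, and the residue evaluation against the Hessian class, after which everything collapses to the same parity/age/$\widetilde{\varepsilon}$ identity you isolate, checked from \eqref{bar c}, \eqref{eq:aa} and Proposition~\ref{prop: supergrading agrees}. The only divergence is in the final deduction that axiom~(v) follows from \eqref{eq:44}: the paper argues via the support of $J_{f,G}$ (both sides are non-zero only when $(g,h)$ is spanning and $\omega'\in\Omega'_{f,(gh)^{-1}}$) rather than your socle reduction, and both treatments of that reduction step are comparably brief.
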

\begin{proof}
Let $(g_1,g_2,h_1,h_2)$ be the factorization of the spanning pair $(g,h)$.
The LHS of the equation~\eqref{eq:44} is calculated as 
\begin{align*}
&\ J_{f,gh}\left(\overline v_g\vdash \omega_h,\left[\frac{1}{\mu_{f^{g\cap h}}}\hess(f^{g\cap h})\right]\omega_{(gh)^{-1}}\right) \\
=&\ \frac{1}{\overline\alpha_h}\cdot J_{f,gh}\left(\left(\overline v_g\circ \overline v_h\right)\vdash \zeta,\left[\frac{1}{\mu_{f^{g\cap h}}}\hess(f^{g\cap h})\right]\omega_{(gh)^{-1}}\right) \\
=&\ \frac{\overline\alpha_{gh}\overline c_{g,h}}{\overline\alpha_h}\cdot J_{f,gh}\left(\omega_{gh},\left[\frac{1}{\mu_{f^{g\cap h}}}\hess(f^{g\cap h})H_{g,h}\right]\omega_{(gh)^{-1}}\right) \\
=&\ \frac{\overline\alpha_{gh}}{\overline\alpha_h}\cdot (-1)^{\frac{1}{2}(N-N_{g_2})(N-N_{g_2}-1)}\cdot  \epi \left[-\frac{1}{2}\age(g_2)\right]\cdot \frac{\widetilde{\varepsilon}_{g_1,g_2}\widetilde{\varepsilon}_{h_2,h_1}}{\widetilde{\varepsilon}_{g_1,h_1}}\\
&\quad \cdot (-1)^{N-N_{gh}}\cdot \epi\left[-\frac{1}{2}\age(gh)\right]\cdot |G|\\
=&\ \frac{\overline\alpha_{gh}}{\overline\alpha_h}\cdot (-1)^{\frac{1}{2}(N-N_{g_2})(N-N_{g_2}-1)+(N-N_{gh})}\\
&\quad \cdot \epi \left[-\frac{1}{2}\age(g_1)-\frac{1}{2}\age(h_1)-\frac{1}{2}\age(g_2)\right]\cdot \frac{\widetilde{\varepsilon}_{g_1,g_2}\widetilde{\varepsilon}_{h_2,h_1}}{\widetilde{\varepsilon}_{g_1,h_1}}\cdot |G|.
\end{align*}
On the other hand, the RHS of the equation~\eqref{eq:44} is calculated as 
\begin{align*}
&\ (-1)^{(N-N_g)(N-N_h)} \cdot 
J_{f,h}\left(\omega_h, \left((h^{-1})^\ast\overline v_g\right)\vdash\left(\left[\frac{1}{\mu_{f^{g\cap h}}}\hess(f^{g\cap h})\right]\omega_{(gh)^{-1}}\right)\right)\\
=&\ \frac{1}{\overline\alpha_{(gh)^{-1}}}(-1)^{(N-N_g)(N-N_h)} \cdot \epi \left[-\age(h_2^{-1})\right]\\
&\quad \cdot J_{f,h}\left(\omega_h, \left(\left[\frac{1}{\mu_{f^{g\cap h}}}\hess(f^{g\cap h})\right]\overline v_g\circ \overline v_{(gh)^{-1}}\right)\vdash \zeta\right)\\
=&\ \frac{\overline\alpha_{h^{-1}}c_{g,(gh)^{-1}}}{\overline\alpha_{(gh)^{-1}}}(-1)^{(N-N_g)(N-N_h)} \cdot\epi \left[-\age(g_2)\right]\cdot 
J_{f,h}\left(\omega_h, \left[\frac{1}{\mu_{f^{g\cap h}}}\hess(f^{g\cap h})\right]\vdash \omega_{h^{-1}}\right)\\
=&\ \frac{\overline\alpha_{h^{-1}}}{\overline\alpha_{(gh)^{-1}}}(-1)^{(N-N_g)(N-N_h)+\frac{1}{2}(N-N_{g_1})(N-N_{g_1}-1)}
\cdot  \epi \left[-\frac{1}{2}\age(g_1)-\age(g_2)\right]\cdot \frac{\widetilde{\varepsilon}_{g_2,g_1}\widetilde{\varepsilon}_{g_1^{-1},h_1}}{\widetilde{\varepsilon}_{g_2,h_1}}\\
&\quad \cdot (-1)^{N-N_{h}}\cdot \epi\left[-\frac{1}{2}\age(h)\right]\cdot |G|\\
=&\ \frac{\overline\alpha_{h^{-1}}}{\overline\alpha_{(gh)^{-1}}}(-1)^{(N-N_g+1)(N-N_h)+\frac{1}{2}(N-N_{g_1})(N-N_{g_1}-1)-(N-N_{g_2})+(N-N_{g_1})(N-N_{g_2})}\\
&\quad\cdot  \epi \left[-\frac{1}{2}\age(g_1)-\frac{1}{2}\age(h_1)-\frac{1}{2}\age(g_2)\right]\cdot \frac{\widetilde{\varepsilon}_{g_1,g_2}\widetilde{\varepsilon}_{h_2,h_1}}{\widetilde{\varepsilon}_{g_1,h_1}}\cdot |G|,
\end{align*}
where we used that $\widetilde{\varepsilon}_{g_2^{-1},h_1}^{-1}=\widetilde{\varepsilon}_{g_2^{-1},h_1}=\widetilde{\varepsilon}_{h_2,h_1}$ and 
$\widetilde{\varepsilon}_{g_1^{-1},h_1}=\widetilde{\varepsilon}_{g_1,h_1}=\widetilde{\varepsilon}_{g_1,h_1}^{-1}$. 
We have $\overline\alpha_{gh}\overline\alpha_{(gh)^{-1}}=(-1)^{\frac{1}{2}(N-N_{gh})(N-N_{gh}+1)}$ and 
$\overline\alpha_{h}\overline\alpha_{h^{-1}}=(-1)^{\frac{1}{2}(N-N_{h})(N-N_{h}+1)}$ by \eqref{eq:aa}.
Hence, it follows from a direct calculation by the use of 
\begin{align*}
&N-N_{g}=(N-N_{g_1})+(N-N_{g_2}),\quad N-N_{h}=(N-N_{h_1})+(N-N_{h_2}), \\
&N-N_{gh}=N-N_{g_1g_2}=(N-N_{g_1})+(N-N_{h_1}),\quad N_{g_2}=N_{h_2},
\end{align*}
(cf. Proposition~\ref{prop: supergrading agrees}) that 
\begin{align*}
&\frac{1}{2}(N-N_{gh})(N-N_{gh}+1)+\frac{1}{2}(N-N_{g_2})(N-N_{g_2}-1)+(N-N_{gh})\\
&-\frac{1}{2}(N-N_{h})(N-N_{h}+1)+(N-N_g+1)(N-N_h)\\
&+\frac{1}{2}(N-N_{g_1})(N-N_{g_1}-1)-(N-N_{g_2})+(N-N_{g_1})(N-N_{g_2})\\
\equiv&\ 0\ ({\rm mod}\ 2),
\end{align*}
which gives the equation~\eqref{eq:44}
For $X\in \A'_g$, $\omega\in\Omega'_{f,h}$, $\omega'\in\Omega'_{f,G}$,
$J_{f,G}(X\vdash\omega,\omega')$ is non-zero only if $\omega'\in\Omega'_{f,(gh)^{-1}}$ and the pair $(g,h)$ is a spanning pair. 
Note that $I_g\cup I_h\cup I_{gh}=I_\id$ if and only if $I_h\cup I_{(gh)^{-1}}\cup I_{g^{-1}}=I_\id$,
which means the pair $(g,h)$ is a spanning pair if and only if the pair $(h,(gh)^{-1})$ is so.
Therefore, $J_{f,G}(X\vdash\omega,\omega')$ is non-zero if and only if $J_{f,G}(\omega,(h^{-1})^\ast X\vdash\omega')$ is so.
It follows that the axiom~(v) can be reduced to the equation~\eqref{eq:44}. 
\end{proof}
The last axiom (vi) is trivially satisfied for $\A'$.
Therefore, we have shown the existence of a $G_f$-twisted Jacobian algebra of $f$. 
Moreover, it is easy to see the following
\begin{proposition}
For each subgroup $G\subseteq G_f$, 
there exists a $G$-twisted Jacobian algebra of $f$.
\end{proposition}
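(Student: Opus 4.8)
The plan is to realize $\Jac'(f,G)$ as a subalgebra of the algebra $\A'=\Jac'(f,G_f)$ constructed above. I would set
\[
\A'_G:=\bigoplus_{g\in G}\A'_g\subseteq\A',
\]
a $\ZZ/2\ZZ$-graded $\CC$-submodule. Because $G$ is a subgroup of $G_f$, for $g,h\in G$ we have $gh\in G$ and $\overline v_g\circ\overline v_h=\overline c_{g,h}[H_{g,h}]\overline v_{gh}\in\A'_G$ (and the product is $0$ when $(g,h)$ is not spanning); hence $\A'_G$ is closed under $\circ$, so it is a $\ZZ/2\ZZ$-graded $\CC$-subalgebra of $\A'$ with unit $\overline v_\id$. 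The module action $\vdash$ on $\Omega'_{f,G}=\bigoplus_{g\in G}\Omega'_{f,g}$, the isomorphism $\vdash\zeta$, and the pairing $J_{f,G}$ all restrict, since each of these structures is graded by $G$ and, on $\A'_G$, never leaves the indices lying in $G$.

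First I would dispatch the axioms not referring to $\Aut(f,G)$, all of which are immediate. Axiom (i) holds because $\A'_g\cong\Omega'_{f,g}$ and $\A'_\id=\Jac(f)$, with the inherited $\ZZ/2\ZZ$-grading; axiom (ii) is the closure statement above together with $\A'_\id=\Jac(f)$; axiom (iii) holds because $\A'_g\vdash\Omega'_{f,h}\subseteq\Omega'_{f,gh}$ with $gh\in G$ and because $\vdash\zeta$ restricts to an isomorphism $\A'_G\stackrel{\cong}{\longrightarrow}\Omega'_{f,G}$. For axiom (vi), given $G\subseteq G'\subseteq G_f$ both $\A'_G$ and $\A'_{G'}$ sit inside $\A'$ as subalgebras with $\A'_G\subseteq\A'_{G'}$, so the inclusion is the desired injective algebra homomorphism, compatible with the fixed $\zeta$ by construction. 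The $G$-twisted $\ZZ/2\ZZ$-graded commutativity in axiom (iv)(b) involves only $g^\ast$ for $g\in G$ and is already contained, for every such pair, in Proposition~\ref{prop:Gtwistedcomm}.

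The delicate point --- and the step I expect to be the main obstacle --- is the remaining equivariance in axioms (iv)(a) and (v), which are phrased in terms of the group $\Aut(f,G)$. In general $\Aut(f,G)$ is neither contained in nor equal to $\Aut(f,G_f)$: an automorphism may normalize $G$ without normalizing $G_f$. Consequently $\Aut(f,G)$ need not act on the ambient $\A'$ at all, and one cannot obtain these axioms by naive restriction; they must be checked directly on $\A'_G$. A given $\varphi\in\Aut(f,G)$ does normalize $G$ and maps $\Fix(\varphi\circ g\circ\varphi^{-1})$ to $\Fix(g)$ for $g\in G$, so it acts on $\Omega'_{f,G}$ and hence on $\A'_G$; the task is to show that this action respects $\circ$ and is compatible with $J_{f,G}$.

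The crux is the covariance of the structure constants: that $\overline c_{g,h}$ transforms into $\overline c_{\varphi g\varphi^{-1},\varphi h\varphi^{-1}}$ correctly against the form-pullback scalars of $\varphi$. Here the helpful facts are that $\varphi g\varphi^{-1}$ is linearly conjugate to $g$ (linearize at the common fixed point $\mathbf 0$), so $\age(\varphi g\varphi^{-1})=\age(g)$ and $N_{\varphi g\varphi^{-1}}=N_g$, and that $[\varphi^\ast H_{g,h}]$ is the determinant transform of $[H_{\varphi g\varphi^{-1},\varphi h\varphi^{-1}}]$ on the coordinate block $I_{g,h}$. I would then verify axiom (v) directly from the manifest covariance of the residue pairing $J_{f,G}$ under bi-regular maps together with these invariances, and deduce axiom (iv)(a) from axiom (v) using the non-degeneracy of $J_{f,G}$ (the product of a Frobenius structure being determined by its trilinear form $J_{f,G}\!\left((X\circ Y)\vdash\zeta,\,Z\vdash\zeta\right)$). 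This yields all the axioms for $\A'_G$, so $\A'_G$ is a $G$-twisted Jacobian algebra of $f$.
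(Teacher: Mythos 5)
Your construction is the paper's: restrict the $G_f$-twisted algebra $\A'$ to $\A'_G:=\bigoplus_{g\in G}\A'_g$ together with the restricted $\circ$, $\vdash$ and $J_{f,G}$, and check the axioms. The paper disposes of this in one line (``it is almost obvious''), and the subtlety you flag --- that $\Aut(f,G)$ need not sit inside $\Aut(f,G_f)$, so the equivariance axioms cannot be obtained by naive restriction of a $G_f$-level action --- is genuine; but it is already absorbed by the paper, whose verification of axiom (iv) for $\A'$ is deliberately carried out for an arbitrary $\varphi\in\Aut(f,G)$ (it only ever needs $\varphi\circ g\circ\varphi^{-1}$ and $\varphi\circ h\circ\varphi^{-1}$ to lie in $G_f$, which holds for $g,h\in G$), using exactly the covariances you list: invariance of $N_g$ and $\age(g)$ under conjugation, the transformation of $\omega_g$, $\overline v_g$ and $[H_{g,h}]$ by the determinant factors $\lambda_{\varphi_{g_i}}$, and the $\widetilde\varepsilon$ bookkeeping. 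The one place where your route genuinely diverges is the plan to prove axiom (v) first and then deduce (iv)(a) from non-degeneracy of $J_{f,G}$; be aware that this is not a free lunch, since the module action $\vdash$ is defined by the same structure constants $\overline c_{g,h}[H_{g,h}]$ as $\circ$, so the $\varphi$-covariance of the trilinear form $J_{f,G}\bigl((X\circ Y)\vdash\zeta,Z\vdash\zeta\bigr)$ is not independent input but requires essentially the same direct computation on the structure constants that the paper performs; once that computation is done, (iv)(a) follows directly and the detour through (v) is unnecessary. With that caveat your argument is correct and fills in the details the paper leaves implicit.
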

\begin{proof}
Consider the subspace $\A'_G$ of $\A'$ defined by 
\[
\A'_G:=\bigoplus_{g\in G}\A'_g,
\]
the restriction of the product structure map $\circ:\A'\otimes_\CC\A'\longrightarrow \A'$ to 
$\A'_G\otimes_\CC \A'_G$ 
and the restriction of the $\A'$-module structure map $\vdash:\A'\otimes_\CC\Omega_{f,G_f}'\longrightarrow \A'$ to $\A'_G\otimes_\CC\Omega'_{F,G}$.
By the construction of these structures on $\A'$, it is almost obvious that they satisfy all the axioms in Definition~\ref{axioms}.
\end{proof}
We have finished the proof of Theorem~\ref{theorem_N}.
\section{Orbifold Jacobian algebras for ADE orbifolds}
The classification of invertible polynomials in three variables giving ADE singularities and 
the subgroups of their maximal diagonal symmetries preserving the holomorphic volume form is given as follows
(see also \cite{et} Section~8 Table~3).
\begin{table}[h]
{\small
\begin{center}
\begin{tabular}{c||l|c|c}
{\rm Type} & $f(x_1,x_2,x_3)$ & $G_f\cap {\rm SL}(3;\CC)$ & {\rm Singularity Type}\\
\hline
\hline
{\rm I} & $z_1^{2k+1}+z_2^2+z_3^2$,\quad $k \ge  1$ & $\left<\frac{1}{2}(0,1,1)\right>$ & $A_{2k}$\\
 & $z_1^{2k}+z_2^2+z_3^2$,\quad $k \ge  1$ & $\left<\frac{1}{2}(0,1,1),\frac{1}{2}(1,0,1)\right>$ & $A_{2k-1}$\\
 & $z_1^{3}+z_2^3+z_3^2$ & $\left<\frac{1}{3}(1,2,0)\right>$ & $D_4$ \\
 & $z_1^{4}+z_2^3+z_3^2$ & $\left<\frac{1}{2}(1,0,1)\right>$ & $E_6$ \\
 & $z_1^{5}+z_2^3+z_3^2$ & $\{1\}$ & $E_8$ \\ 
\hline
{\rm I\!I} 
 & $z_1^2+z_2^2+z_2z_3^{2k}$,\quad $k \ge  1$ & $\left<\frac{1}{2}(1,0,1)\right>$ & $A_{4k-1}$\\
 & $z_1^2+z_2^2+z_2z_3^{2k+1}$,\quad $k \ge  1$ & $\left<\frac{1}{2}(0,1,1)\right>$ & $A_{4k+1}$\\
 & $z_1^2+z_2^{k-1}+z_2z_3^2$,\quad $k \ge  4$ & $\left<\frac{1}{2}(1,0,1)\right>$ & $D_{k}$\\
 & $z_1^3+z_2^2+z_2z_3^2$ & $\{1\}$ & $E_6$\\
 & $z_1^2+z_2^3+z_2z_3^3$ & $\{1\}$ & $E_7$ \\
\hline
{\rm I\!I\!I}
 & $z_1^2+z_3z_2^2+z_2z_3^{k+1}$,\quad $k \ge  1$ & $\{1\}$ & $D_{2k+2}$\\
\hline
{\rm I\!V} & $z_1^k+z_1z_2+z_2z_3^l$,\quad $k, l \ge  2$ & $\{1\}$ & $A_{kl-1}$\\
    & $z_1^2+z_1z_2^k+z_2z_3^2$,\quad $k \ge  2$ & $\{1\}$ & $D_{2k+1}$ \\
\hline
{\rm V} & $z_1z_2+z_2^kz_3+z_3^lz_1$,\quad $k,l \ge  1$ & $\{1\}$ & $A_{kl}$\\
\hline
\end{tabular}
\end{center}
\smallskip
\caption{{\small Classification of invertible polynomials giving ADE singularities and the groups of their diagonal symmetries 
preserving the holomorphic volume form.}}\label{table1}
}
\end{table}
As it is explained in Section~8 in \cite{et}, one can describe explicitly the geometry of vanishing cycles for 
the holomorphic map $\widehat f:\widehat{\CC^3/G}\longrightarrow \CC$. 
Here, $\widehat{\CC^3/G}$ is
a crepant resolution of $\CC^3/G$ and $\widehat f$ is the convolution of the resolution map $\widehat{\CC^3/G}\longrightarrow \CC^3/G$ and the induced one $f:\CC^3/G\longrightarrow \CC$. 
Note that $\widehat{\CC^3/G}$ is covered by some charts all isomorphic to $\CC^3$.
When $G$ respects one coordinate we only need to look at the resolutions of $\CC^2$ given in \cite{BaKn}. 
For $G\cong\ZZ/2\ZZ$ acting $(z_i, z_j)\mapsto (-z_i,-z_j)$, we have $\CC^3/G\cong\CC\times\{z^2=xy\}\subset\CC^4$ by $x=z_i^2, y=z_j^2, z=z_iz_j$ and we have the two charts $ \CC^3\rightarrow \CC^4$;
\[
(t,u,v)\mapsto(t,u,uv^2,uv) \text{ and }
(t,u,v)\mapsto(t,u^2v,v,uv).
\]
For $G\cong\ZZ/3\ZZ=\left<\frac{1}{3}(1,2,0)\right>$, we have $\CC^3/G\cong\CC\times\{z^3=xy\}\subset\CC^4$ by $x=z_1^3, y=z_2^3, z=z_1z_2$ and we have the three charts $ \CC^3\rightarrow \CC^4$;
\[
(t,u,v)\mapsto(t,u,u^2v^3,uv) \text{ , }
(t,u,v)\mapsto(t,u^2v,uv^2,uv) \text{ and }
(t,u,v)\mapsto(t,u^3v^2,v,uv).
\]
We shall calculate the restriction of $\widehat f$ on each chart based on the classification in Table~\ref{table1}.
\noindent
{\bf 1.} For the pair
\begin{equation}
f:=z_1^{k+1}+z_2^2+z_3^2\ (k\ge 1),\quad G:=\left<\frac{1}{2}(0,1,1)\right>,
\end{equation}
we have in the two charts 
\begin{equation}
\widehat f(t,u,v)= t^{k+1}+u+uv^2 \text{ and } \widehat f(t,u,v)= t^{k+1}+u^2v+v.
\end{equation}
Critical points of $\widehat{f}$ are on the intersection of two charts.
\noindent
{\bf 2.} 
For the pair
\begin{equation}
f:=z_1^{2k}+z_2^2+z_3^2\ (k\ge 1),\quad G:=\left<\frac{1}{2}(1,0,1)\right>,
\end{equation}
we have in the two charts 
\begin{equation}
\widehat f(t,u,v)= t^2+u^k+uv^2 \text{ and } \widehat f(t,u,v)= t^2+u^2kv^k+v.
\end{equation}
Critical points of $\widehat{f}$ are on the first chart.
\noindent
{\bf 3.} 
For $k\ge 1$, set
\begin{equation}
f:=z_1^{2k}+z_2^2+z_3^2\ (k\ge 1),\quad G:=\left<\frac{1}{2}(0,1,1),\frac{1}{2}(1,0,1)\right>.
\end{equation}
Here, since the resulution is not unique, we take $A$-Hilb $\CC^3$ of \cite{CR} where $A=\ZZ/2\ZZ\times \ZZ/2\ZZ$. 
We have $\CC^3/G\cong\CC\times\{z^3=wxy\}\subset\CC^4$ by $w=z_1^2, x=z_2^2, y=z_3^2, z=z_1z_2z_3$ 
and we have four charts $ \CC^3\rightarrow \CC^4$;
\[
(t,u,v)\mapsto(t,u,tuv^2,tuv) \text{ , }
(t,u,v)\mapsto(t,tu^2v,v,tuv) \text{ , }
\]
\[
(t,u,v)\mapsto(t^2uv,u,v,tuv) \text{ and }
(t,u,v)\mapsto(tu,uv,tv,tuv) .
\]
Then we have in the four charts 
\begin{subequations}
\begin{equation}
\widehat f(t,u,v)= t^k+u+tuv^2 \text{ , } \widehat f(t,u,v)= t^k+tu^2v+v,
\end{equation}
\begin{equation}
\widehat f(t,u,v)= t^{2k}u^kv^k+u+v \text{ and } \widehat f(t,u,v)= t^ku^k+uv+tv.
\end{equation}
\end{subequations}
Critical points of $\widehat{f}$ are on the fourth chart.
\noindent
{\bf 4.} 
For the pair
\begin{equation}
f:=z_1^3+z_2^3+z_3^2,\quad G:=\left<\frac{1}{3}(1,2,0)\right>,
\end{equation}
we have in the three charts 
\begin{equation}
\widehat f(t,u,v)= t^2+u+u^2v^3,\ \widehat f(t,u,v)= t^2+u^2v+uv^2 \text{ and }\widehat f(t,u,v)= t^2+u^3v^2+v.
\end{equation}
Critical points of $\widehat{f}$ are on the second chart.
\noindent
{\bf 5.} 
For the pair
\begin{equation}
f:=z_1^4+z_2^3+z_3^2,\quad G:=\left<\frac{1}{2}(1,0,1)\right>,
\end{equation}
we have in the two charts 
\begin{equation}
\widehat f(t,u,v)= t^3+u^2+uv^2 \text{ and } \widehat f(t,u,v)= t^3+u^4v^2+v.
\end{equation}
Critical points of $\widehat{f}$ are on the first chart.
\noindent
{\bf 6.} 
For the pair
\begin{equation}
f:=z_1^2+z_2^2+z_2z_3^{2k}\ (k\ge 1),\quad G:=\left<\frac{1}{2}(1,0,1)\right>,
\end{equation}
we have in the two charts 
\begin{equation}
\widehat f(t,u,v)=t^2+t u^k v^{2k}+u \text{ and } \widehat f(t,u,v)= t^2+t v^k+v u^2.
\end{equation}
Critical points of $\widehat{f}$ are on the second chart.
\noindent
{\bf 7.} 
For the pair
\begin{equation}
f:=z_1^2+z_2^2+z_2z_3^{2k+1}\ (k\ge 1),\quad G:=\left<\frac{1}{2}(0,1,1)\right>,
\end{equation}
we have in the two charts 
\begin{equation}
\widehat f(t,u,v)=t^2+u+u^{k+1} v^{2k+1} \text{ and } \widehat f(t,u,v)= t^2+v u^2 +u v^{k+1}.
\end{equation}
Critical points of $\widehat{f}$ are on the second chart.
\noindent
{\bf 8.} 
For the pair
\begin{equation}
f:=z_1^2+z_2^{k-1}+z_2z_3^2\ (k\ge 4),\quad G:=\left<\frac{1}{2}(1,0,1)\right>,
\end{equation}
we have in the two charts 
\begin{equation}
\widehat f(t,u,v)=t^{k-1}+t u v^2+u \text{ and } \widehat f(t,u,v)= t^{k-1}+t v+v u^2.
\end{equation}
Critical points of $\widehat{f}$ are on the second chart.
To summarize, we observed that critical points of the map $\widehat f$ are contained in one chart 
isomorphic to $\CC^3$. The restriction of $\widehat f$ to the chart is given by
$\overline{f}$ defined in Table~\ref{table2}.
\begin{table}[h]
{\small
\begin{center}
\begin{tabular}{l||l|c||c}
&$f(x_1,x_2,x_3)$ & $G$ & $\overline{f}(y_1,y_2,y_3)$\\
\hline
1.&$z_1^{k+1}+z_2^2+z_3^2$,\quad $k \ge  1$ & $\left<\frac{1}{2}(0,1,1)\right>$ & $y_1^{k+1}+y_2+y_2y_3^2$\\
2.&$z_1^{2k}+z_2^2+z_3^2$,\quad $k \ge  1$ & $\left<\frac{1}{2}(1,0,1)\right>$ & $y_1^2+y_2^k+y_2y_3^2$\\
3.&$z_1^{2k}+z_2^2+z_3^2$,\quad $k \ge  1$ & $\left<\frac{1}{2}(0,1,1),\frac{1}{2}(1,0,1)\right>$ & $y_1^ky_2^k+y_1y_3+y_2y_3$ \\
4.&$z_1^{3}+z_2^3+z_3^2$ & $\left<\frac{1}{3}(1,2,0)\right>$ & $y_1^2+y_3y_2^2+y_2y_3^2$ \\
5.&$z_1^{4}+z_2^3+z_3^2$ & $\left<\frac{1}{2}(1,0,1)\right>$ & $y_1^3+y_2^2+y_2y_3^2$ \\
6.&$z_1^2+z_2^2+z_2z_3^{2k}$,\quad $k \ge  1$ & $\left<\frac{1}{2}(1,0,1)\right>$ & $y_1^2+y_1y_2^k+y_2y_3^2$\\
7.&$z_1^2+z_2^2+z_2z_3^{2k+1}$,\quad $k \ge  1$ & $\left<\frac{1}{2}(0,1,1)\right>$ & $y_1^2+y_3y_2^2+y_2y_3^{k+1}$\\
8.&$z_1^2+z_2^{k-1}+z_2z_3^2$,\quad $k \ge  4$ & $\left<\frac{1}{2}(1,0,1)\right>$ & $y_1^{k-1}+y_1y_2+y_2y_3^2$\\
\hline
\end{tabular}
\end{center}
}
\smallskip
\caption{$(f,G)\cong (\overline{f},\{1\})$}\label{table2}
\end{table}

Therefore, concerning the geometry of vanishing cycles, the pair $(f,G)$ is equivalent to the pair $(\overline{f},\{1\})$.
Then, it is quite natural to expect that the orbifold Jacobian algebra $\Jac(f,G)$ of $(f,G)$ is isomorphic 
to the one $\Jac(\overline{f},\{1\})$ of the pair $(\overline{f},\{1\})$, the usual Jacobian algebra $\Jac(\overline{f})$ of $\overline{f}$, which is the following 
\begin{theorem}\label{thm_ADEiso}
There is an isomorphism of Frobenius algebras 
\begin{equation}
\Jac(f,G)\cong \Jac(\overline{f},\{1\})
\end{equation}
for all $f$ and $\overline{f}$ in Table~\ref{table2}.
\end{theorem}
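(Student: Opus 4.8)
The plan is to verify the isomorphism row by row in Table~\ref{table2}, using the explicit model of $\Jac'(f,G)$ constructed in the proof of Theorem~\ref{theorem_N}. Fix one row, so that $G\subseteq G_f\cap\SL(3;\CC)$; then $\age(g)\in\ZZ$ for all $g\in G$, the class $\zeta$ is $G$-invariant, and by the discussion following Theorem~\ref{theorem_N} the orbifold Jacobian algebra $\Jac(f,G)=\left(\Jac'(f,G)\right)^G$ is a $\ZZ/2\ZZ$-graded commutative Frobenius algebra. Working inside the explicit algebra $\A'$ and taking $G$-invariants, I would first record the decomposition $\Jac(f,G)=\bigoplus_{g\in G}\left(\Jac(f^g)\right)^G\,\overline v_g$, where the summand for $g$ is spanned by the $G$-invariant classes in $\Jac(f^g)$ times $\overline v_g$. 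In every case of Table~\ref{table2} the nontrivial elements of $G$ fix a coordinate line, so all sectors sit in even $\ZZ/2\ZZ$-degree and $\Jac(f,G)$ is concentrated in degree $\overline 0$, matching the ungraded algebra $\Jac(\overline f)$. The goal is then to exhibit, for each row, an explicit $\CC$-algebra isomorphism $\Phi\colon\Jac(\overline f)\stackrel{\cong}{\longrightarrow}\Jac(f,G)$ and to check that it intertwines the residue pairing on $\Jac(\overline f)$ with the orbifold residue pairing $J_{f,G}$.

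First I would compute $\Jac(f,G)$ concretely. For each $g\in G$ I determine $\Fix(g)$, the restriction $f^g$, the algebra $\Jac(f^g)$, and the induced $G$-action by pullback of forms on $\Fix(g)$, thereby reading off a basis of $\left(\Jac(f^g)\right)^G\,\overline v_g$. Summing over $g$ yields a basis of $\Jac(f,G)$, and a first consistency check is that $\dim_\CC\Jac(f,G)=\mu_{\overline f}$ equals the Milnor number attached to the singularity type in Table~\ref{table1}. I would then compute all products of basis elements via the rule $\overline v_g\circ\overline v_h=\overline c_{g,h}[H_{g,h}]\overline v_{gh}$ of the existence construction, where $H_{g,h}$ is the Hessian minor of Definition~\ref{definition: Hgh} and $\overline c_{g,h}$ is the constant~\eqref{bar c}; this identifies a small set of algebra generators of $\Jac(f,G)$ together with the relations among them (typically a coordinate class in the untwisted sector and one twisted generator $\overline v_g$ per nontrivial $g$, with a relation of the form $\overline v_g\circ\overline v_{g^{-1}}=\text{const}\cdot[H_{g,g^{-1}}]$ coming from Lemma~\ref{lemma_gg^-1}).

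On the other side, I would compute $\Jac(\overline f)$ directly from its Jacobian ideal, reading off generators and relations; for those rows where $\overline f$ is a weighted homogeneous invertible polynomial this is immediate from the monomial basis of Proposition~\ref{monomial basis}, and for the remaining rows (where $\overline f$ is not weighted homogeneous and may have several critical points away from the origin) a short elimination gives the presentation. The isomorphism $\Phi$ is then defined by sending each variable $y_i$ to a suitable scalar multiple of one of the generators found above, namely either a coordinate class $[z_j]$ in the untwisted sector or a twisted generator $\overline v_g$; the scalars are chosen to absorb the phases $\overline c_{g,h}$ and $\epi[-\frac{1}{2}\age(g_2)]$ so that the defining relations of $\Jac(\overline f)$ are carried exactly onto those of $\Jac(f,G)$. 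Checking that $\Phi$ is well defined and bijective is then a direct comparison of the two presentations.

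For the Frobenius compatibility I would exploit that both pairings have the form $\eta(a,b)=\epsilon(ab)$ for trace functionals $\epsilon(x):=\eta(x,1)$ (the residue on $\Jac(\overline f)$, and the $g\leftrightarrow g^{-1}$ part of $J_{f,G}$ with its $|G|$- and age-normalizations). Since $\Phi$ is an algebra homomorphism, it is an isomorphism of Frobenius algebras as soon as $\epsilon_{f,G}\circ\Phi=\epsilon_{\overline f}$, which reduces to matching the two functionals on the socle generators (the top classes of each local factor of $\Jac(\overline f)$, equivalently of each sector); this is a short computation using the normalization~\eqref{H and hessians} of $H_{g,g^{-1}}$ together with Lemma~\ref{lemma_gg^-1}. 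The main obstacle is the sign-and-phase bookkeeping in the product computation: one must track the roots of unity $\epi[-\frac{1}{2}\age(g_2)]$, the signs $(-1)^{\frac{1}{2}(N-N_{g_2})(N-N_{g_2}-1)}$, and the signature factors $\widetilde{\varepsilon}$ appearing in the twisted product, and confirm that after the rescaling built into $\Phi$ they reproduce honest monomial multiplication in $\Jac(\overline f)$ uniformly in the parameters $k,l$. By contrast, the dimension count and the socle check are routine once the generators have been matched.
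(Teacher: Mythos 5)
Your proposal matches the paper's proof in both strategy and substance: the paper also proceeds case by case through the eight rows of Table~\ref{table2}, computes an explicit basis of $\Jac(f,G)$ sector by sector (using normalized twisted generators $e_g=\frac{1}{|K_g|}v_g$), derives the product relations and the values of $J_{f,G}$ on top classes, presents $\Jac(\overline f)$ by its Jacobian ideal, and writes down an explicit generator-matching isomorphism whose Frobenius compatibility is checked by comparing the residue pairings on socle elements. The only differences are cosmetic (normalization of the twisted generators and whether the relations are read off from the product formula or from the pairing), so your plan is essentially the paper's argument.
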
 
\begin{proof}
We shall give a proof of this theorem based on the classification in Table~\ref{table2}.
Let the notation be as in Section~4. 
For each $g\in G$ let $K_g$ be the maximal subgroup of $G$ fixing $\Fix(g)$. 
Define $e_g\in \Jac(f,G)$ by $e_g:=\frac{1}{|K_g|}v_g$, which is more natural element than $v_g$.
\noindent
{\bf 1.} For $k\ge 1$, set
\begin{equation}
f:=z_1^{k+1}+z_2^2+z_3^2,\quad G:=\left<g\right>,\ g:=\frac{1}{2}(0,1,1),
\end{equation}
\begin{equation}
\overline{f}:=y_1^{k+1}+y_2+y_2y_3^2.
\end{equation}
The Jacobian algebra $\Jac(f,\{1\})$ and the bilinear form $J_{f,\{1\}}$ on $\Omega_{f,\{1\}}$ can be calculated as
\begin{equation}
\Jac(f,\{1\})=\CC[z_1,z_2,z_3]\left/\left((k+1)z_1^{k},2z_2,2z_3\right)\right.\cong\left<[1],[z_1],\dots, [z_1]^{k-1}\right>_\CC,
\end{equation}
\begin{equation}
J_{f,\{1\}}\left([dz_1\wedge dz_2\wedge dz_3],[z_1^{k-1}dz_1\wedge dz_2\wedge dz_3]\right)=\frac{1}{4(k+1)}.
\end{equation}
As a $\CC$-module, the orbifold Jacobian algebra $\Jac(f,G)$ is of the following form$:$
\begin{equation}
\Jac(f,G)\cong\left<e_\id,[z_1],\dots, [z_1]^{k-1}\right>_\CC\oplus \left<e_g,[z_1] e_g,\dots, [z_1]^{k-1} e_g\right>_\CC.
\end{equation}
Note that $\dim_\CC \Jac(f,G)=2k$.
The bilinear form $J_{f,G}$ on $\Omega_{f,G}$ can be calculated as
\begin{subequations}
\begin{eqnarray}
J_{f,\id}\left(e_{\id}\vdash\zeta,[z_1]^{k-1}\vdash\zeta\right)&=&
J_{f,\id}\left([dz_1\wedge dz_2\wedge dz_3],[z_1^{k-1}dz_1\wedge dz_2\wedge dz_3]\right)\\
&=& 2\cdot\frac{1}{4(k+1)}=\frac{1}{2(k+1)},\\
J_{f,g}\left(e_{g}\vdash\zeta,[z_1]^{k-1}e_{g}\vdash\zeta\right)&=&\frac{1}{4}J_{f,g}\left([dz_1],[z_1^{k-1}dz_1]\right)\\
&=&\frac{1}{4}\cdot(-1)\cdot{2}\cdot\frac{1}{k+1}=-\frac{1}{2(k+1)},
\end{eqnarray}
\end{subequations}
which imply the following relations in the orbifold Jacobian algebra $\Jac(f,G):$
\begin{equation}
[z_1]^k=0,\quad e_g^2=-e_\id.
\end{equation}
On the other hand, the Jacobian algebra $\Jac(\overline{f},\{1\})$ is given by
\begin{eqnarray}
\Jac(\overline{f},\{1\})&=&\CC[y_1,y_2,y_3]\left/\left((k+1)y_1^{k},1+y_3^2,2y_2y_3\right)\right.\\
&\cong& \CC[y_1,y_3]\left/\left(y_1^{k},y_3^2+1\right)\right..
\end{eqnarray}
Note that $\dim_\CC \Jac(\overline{f},\{1\})=2k$.
Therefore, we have an algebra isomorphism
\begin{equation}
\Jac(\overline{f},\{1\})\stackrel{\cong}{\longrightarrow} \Jac(f,G),\quad [y_1]\mapsto [z_1],\ [y_3]\mapsto e_g,
\end{equation}
which is, moreover, an isomorphism of Frobenius algebras since we have
\begin{equation}
J_{\overline{f},\{1\}}\left([dy_1\wedge dy_2\wedge dy_3],[y_1^{k-1}dy_1\wedge dy_2\wedge dy_3]\right)=\frac{1}{2(k+1)}.
\end{equation}
\noindent
{\bf 2.} For $k\ge 1$, set
\begin{equation}
f:=z_1^{2k}+z_2^2+z_3^2,\quad G:=\left<g\right>,\ g:=\frac{1}{2}(1,0,1),
\end{equation}
\begin{equation}
\overline{f}:=y_1^2+y_2^k+y_2y_3^2.
\end{equation}
The Jacobian algebra $\Jac(f,\{1\})$ and the bilinear form $J_{f,\{1\}}$ on $\Omega_{f,\{1\}}$ can be calculated as
\begin{equation}
\Jac(f,\{1\})=\CC[z_1,z_2,z_3]\left/\left(2k z_1^{2k-1},2z_2,2z_3\right)\right.\cong\left<[1],[z_1],\dots, [z_1]^{2k-2}\right>_\CC,
\end{equation}
\begin{equation}
J_{f,\{1\}}\left([dz_1\wedge dz_2\wedge dz_3],[z_1^{2k-2}dz_1\wedge dz_2\wedge dz_3]\right)=\frac{1}{8k}.
\end{equation}
As a $\CC$-module, the orbifold Jacobian algebra $\Jac(f,G)$ is of the following form$:$
\begin{equation}
\Jac(f,G)\cong\left<e_\id,[z_1^2],\dots, [z_1^2]^{k-1}\right>_\CC\oplus \left<e_g\right>_\CC.
\end{equation}
Note that $\dim_\CC \Jac(f,G)=k+1$.
The bilinear form $J_{f,G}$ on $\Omega_{f,G}$ can be calculated as
\begin{subequations}
\begin{eqnarray}
J_{f,\id}\left(e_{\id}\vdash\zeta,[z_1^2]^{k-1}\vdash\zeta\right)&=&
J_{f,\id}\left([dz_1\wedge dz_2\wedge dz_3],[z_1^{2k-2}dz_1\wedge dz_2\wedge dz_3]\right)\\
&=& 2\cdot\frac{1}{8k}=\frac{1}{4k},\\
J_{f,g}\left(e_{g}\vdash\zeta,e_{g}\vdash\zeta\right)&=&\frac{1}{4}J_{f,g}\left([dz_2],[dz_2]\right)\\
&=&\frac{1}{4}\cdot(-1)\cdot{2}\cdot\frac{1}{2}=-\frac{1}{4},
\end{eqnarray}
\end{subequations}
which imply the following relations in the orbifold Jacobian algebra $\Jac(f,G):$
\begin{equation}
[z_1^2]\circ e_g=0,\quad e_g^2=-k[z_1^2]^{k-1}.
\end{equation}
On the other hand, the Jacobian algebra $\Jac(\overline{f},\{1\})$ is given by
\begin{eqnarray}
\Jac(\overline{f},\{1\})&=&\CC[y_1,y_2,y_3]\left/\left(2y_1,ky_2^{k-1}+y_3^2,2y_2y_3\right)\right.\\
&\cong&\CC[y_2,y_3]\left/\left(ky_2^{k-1}+y_3^2,y_2y_3\right)\right..
\end{eqnarray}
Note that $\dim_\CC \Jac(\overline{f},\{1\})=k+1$.
Therefore, we have an algebra isomorphism
\begin{equation}
\Jac(\overline{f},\{1\})\stackrel{\cong}{\longrightarrow} \Jac(f,G),
\quad [y_2]\mapsto [z_1^2],\ [y_3]\mapsto e_g,
\end{equation}
which is, moreover, an isomorphism of Frobenius algebras since we have
\begin{equation}
J_{\overline{f},\{1\}}\left([dy_1\wedge dy_2\wedge dy_3],[y_2^{k-1}dy_1\wedge dy_2\wedge dy_3]\right)=\frac{1}{4k}.
\end{equation}
\noindent
{\bf 3.} For $k\ge 1$, set
\begin{equation}
f:=z_1^{2k}+z_2^2+z_3^2,\quad G:=\left<g,h\right>,\ g:=\frac{1}{2}(0,1,1),\ h:=\frac{1}{2}(1,0,1),
\end{equation}
\begin{equation}
\overline{f}:=y_1^ky_2^k+y_1y_3+y_2y_3.
\end{equation}
The Jacobian algebra $\Jac(f,\{1\})$ and the bilinear form $J_{f,\{1\}}$ on $\Omega_{f,\{1\}}$ can be calculated as
\begin{equation}
\Jac(f,\{1\})=\CC[z_1,z_2,z_3]\left/\left(2kz_1^{2k-1},2z_2,2z_3\right)\right.\cong\left<[1],[z_1],\dots, [z_1]^{2k-2}\right>_\CC,
\end{equation}
\begin{equation}
J_{f,\{1\}}\left([dz_1\wedge dz_2\wedge dz_3],[z_1^{2k-2}dz_1\wedge dz_2\wedge dz_3]\right)=\frac{1}{8k}.
\end{equation}
As a $\CC$-module, the orbifold Jacobian algebra $\Jac(f,G)$ is of the following form$:$
\begin{equation}
\Jac(f,G)\cong\left<e_\id,[z_1^2],\dots, [z_1^2]^{k-1}\right>_\CC
\oplus \left<e'_g, [z_1^2]e'_g,\dots, [z_1^2]^{k-2} e'_g\right>_\CC,
\end{equation}
where $e'_g:=[z_1] e_g$ since $\Jac(f,h)=\{0\}$ and $\Jac(f,gh)=\{0\}$. 
Note that $\dim_\CC \Jac(f,G)=2k-1$.
The bilinear form $J_{f,G}$ on $\Omega_{f,G}$ can be calculated as
\begin{subequations}
\begin{eqnarray}
J_{f,\id}\left(e_{\id}\vdash\zeta,[z_1^2]^{k-1}\vdash\zeta\right)&=&
J_{f,\id}\left([dz_1\wedge dz_2\wedge dz_3],[z_1^{2k-2}dz_1\wedge dz_2\wedge dz_3]\right)\\
&=& 4\cdot\frac{1}{8k}=\frac{1}{2k},\\
J_{f,g}\left(e'_{g}\vdash\zeta,[z_1^2]^{k-2} e'_g\vdash\zeta\right)&=&
\frac{1}{4}J_{f,g}\left([z_1dz_1],[z_1^{2k-3}dz_1]\right)\\
&=&\frac{1}{4}\cdot(-1)\cdot{4}\cdot\frac{1}{2k}=-\frac{1}{2k},
\end{eqnarray}
\end{subequations}
which imply the following relations in the orbifold Jacobian algebra $\Jac(f,G):$
\begin{equation}
[z_1^2]^{k-1}\circ e'_g=0,\quad (e'_g)^2=-[z_1^2].
\end{equation}
On the other hand, the Jacobian algebra $\Jac(\overline{f},\{1\})$
is given by
\begin{equation}
\Jac(\overline{f},\{1\})=\CC[y_1,y_2,y_3]\left/\left(ky_1^{k-1}y_2^k+y_3,ky_1^ky_2^{k-1}+y_3,y_1+y_2\right)\right..
\end{equation}
Note that $\dim_\CC \Jac(\overline{f},\{1\})=2k-1$.
Therefore, we have an algebra isomorphism
\begin{equation}
\Jac(\overline{f},\{1\})\stackrel{\cong}{\longrightarrow} \Jac(f,G),
\quad [y_1y_2]\mapsto [z_1^2],
\ [y_1]\mapsto e'_g,
\end{equation}
which is, moreover, an isomorphism of Frobenius algebras since we have
\begin{equation}
J_{\overline{f},\{1\}}\left([dy_1\wedge dy_2\wedge dy_3],[y_1^{k-1}y_2^{k-1}dy_1\wedge dy_2\wedge dy_3]\right)=\frac{1}{2k}.
\end{equation}
\noindent
{\bf 4.} Set
\begin{equation}
f:=z_1^3+z_2^3+z_3^2,\quad G:=\left<g\right>,\ g:=\frac{1}{3}(1,2,0),
\end{equation}
\begin{equation}
\overline{f}:=y_1^2+y_3y_2^2+y_2y_3^2.
\end{equation}
The Jacobian algebra $\Jac(f,\{1\})$ and the bilinear form $J_{f,\{1\}}$ on $\Omega_{f,\{1\}}$ can be calculated as
\begin{equation}
\Jac(f,\{1\})\cong\CC[z_1,z_2,z_3]\left/\left(3z_1^2,3z_2^2,2z_3\right)\right.
\cong\left<[1],[z_1],[z_2],[z_1z_2]\right>_\CC,
\end{equation}
\begin{equation}
J_{f,\{1\}}\left([dz_1\wedge dz_2\wedge dz_3],[z_1z_2dz_1\wedge dz_2\wedge dz_3]\right)=\frac{1}{18}.
\end{equation}
As a $\CC$-module, the orbifold Jacobian algebra $\Jac(f,G)$ is of the following form$:$
\begin{equation}
\Jac(f,G)\cong\left<e_\id,[z_1z_2]\right>_\CC\oplus \left<e_g,e_{g^{-1}}\right>_\CC.
\end{equation}
Note that $\dim_\CC \Jac(f,G)=4$.
The bilinear form $J_{f,G}$ on $\Omega_{f,G}$ can be calculated as
\begin{subequations}
\begin{eqnarray}
J_{f,\id}\left(e_{\id}\vdash\zeta,[z_1z_2]\vdash\zeta\right)&=&
J_{f,\id}\left([dz_1\wedge dz_2\wedge dz_3],[z_1z_2dz_1\wedge dz_2\wedge dz_3]\right)\\
&=& 3\cdot\frac{1}{18}=\frac{1}{6},\\
J_{f,g}\left(e_{g}\vdash\zeta,e_{g^{-1}}\vdash\zeta\right)&=&
\frac{1}{9}J_{f,g}\left([dz_3],[dz_3]\right)\\
&=&\frac{1}{9}\cdot(-1)\cdot{3}\cdot\frac{1}{2}=-\frac{1}{6},
\end{eqnarray}
\end{subequations}
which imply the following relations in the orbifold Jacobian algebra $\Jac(f,G):$
\begin{equation}
e_g^2=0,\quad e_{g^{-1}}^2=0,\quad 
e_g\circ e_{g^{-1}}=-[z_1z_2].
\end{equation}
On the other hand, the Jacobian algebra $\Jac(\overline{f},\{1\})$
is given by
\begin{eqnarray}
\Jac(\overline{f},\{1\})&=&\CC[y_1,y_2,y_3]\left/\left(2y_1,2y_3y_2+y_3^2,y_2^2+2y_2y_3\right)\right.\\
&\cong&\CC[y_2,y_3]\left/\left(2y_3y_2+y_3^2,y_2^2+2y_2y_3\right)\right..
\end{eqnarray}
Note that $\dim_\CC \Jac(\overline{f},\{1\})=4$.
Therefore, we have an algebra isomorphism
\begin{multline}
\Jac(\overline{f},\{1\})\stackrel{\cong}{\longrightarrow} \Jac(f,G),\\
[y_2]\mapsto e^{\frac{2\pi\sqrt{-1}}{3}} e_g+e^{\frac{4\pi\sqrt{-1}}{3}} e_{g^{-1}},\ [y_3]\mapsto e^{\frac{4\pi\sqrt{-1}}{3}} e_g+e^{\frac{2\pi\sqrt{-1}}{3}} e_{g^{-1}},
\end{multline}
which is, moreover, an isomorphism of Frobenius algebras since we have
\begin{equation}
J_{\overline{f},\{1\}}\left([dy_1\wedge dy_2\wedge dy_3],[y_2y_3dy_1\wedge dy_2\wedge dy_3]\right)=\frac{1}{6}.
\end{equation}
\noindent
{\bf 5.} Set
\begin{equation}
f:=z_1^4+z_2^3+z_3^2,\quad G:=\left<g\right>,\ g:=\frac{1}{2}(1,0,1),
\end{equation}
\begin{equation}
\overline{f}:=y_1^3+y_2^2+y_2y_3^2.
\end{equation}
The Jacobian algebra $\Jac(f,\{1\})$ and the bilinear form $J_{f,\{1\}}$ on $\Omega_{f,\{1\}}$ can be calculated as
\begin{equation}
\Jac(f,\{1\})=\CC[z_1,z_2,z_3]\left/\left(4z_1^3,3z_2^2,2z_3\right)\right.\cong\left<[1],[z_1],[z_2],[z_1^2],[z_1z_2],[z_1^2z_2]\right>_\CC,
\end{equation}
\begin{equation}
J_{f,\{1\}}\left([dz_1\wedge dz_2\wedge dz_3],[z_1^{2}z_2dz_1\wedge dz_2\wedge dz_3]\right)=\frac{1}{24}.
\end{equation}
As a $\CC$-module, the orbifold Jacobian algebra $\Jac(f,G)$ is of the following form$:$
\begin{equation}
\Jac(f,G)\cong\left<e_\id,[z_2],[z_1^2], [z_1^2z_2]\right>_\CC\oplus \left<e_g, [z_2] e_g\right>_\CC.
\end{equation}
Note that $\dim_\CC \Jac(f,G)=6$.
The bilinear form $J_{f,G}$ on $\Omega_{f,G}$ can be calculated as
\begin{subequations}
\begin{eqnarray}
J_{f,\id}\left(e_{\id}\vdash\zeta,[z_1^2z_2]\vdash\zeta\right)&=&
J_{f,\id}\left([dz_1\wedge dz_2\wedge dz_3],[z_1^{2}z_2dz_1\wedge dz_2\wedge dz_3]\right)\\
&=& 2\cdot\frac{1}{24}=\frac{1}{12},\\
J_{f,g}\left(e_{g}\vdash\zeta,[z_2] e_g\vdash\zeta\right)&=&
\frac{1}{4}J_{f,g}\left([dz_2],[z_2dz_2]\right)\\
&=&\frac{1}{4}\cdot(-1)\cdot{2}\cdot\frac{1}{3}=-\frac{1}{6},
\end{eqnarray}
\end{subequations}
which imply the following relations in the orbifold Jacobian algebra $\Jac(f,G):$
\begin{equation}
[z_2]^2=0,\quad 
[z_1^2]\circ e_g=0,\quad e_g^2=-2[z_1^2].
\end{equation}
On the other hand, the Jacobian algebra $\Jac(\overline{f},\{1\})$
is given by
\begin{equation}
\Jac(\overline{f},\{1\})=\CC[y_1,y_2,y_3]\left/\left(3y_1^2,2y_2+y_3^2,2y_2y_3\right)\right..
\end{equation}
Note that $\dim_\CC \Jac(\overline{f},\{1\})=6$.
Therefore, we have an algebra isomorphism
\begin{equation}
\Jac(\overline{f},\{1\})\stackrel{\cong}{\longrightarrow} \Jac(f,G),
\quad [y_1]\mapsto [z_2],\ [y_2]\mapsto [z_1^2],\ [y_3]\mapsto e_g,
\end{equation}
which is, moreover, an isomorphism of Frobenius algebras since we have
\begin{equation}
J_{\overline{f},\{1\}}\left([dy_1\wedge dy_2\wedge dy_3],[y_1y_2dy_1\wedge dy_2\wedge dy_3]\right)=\frac{1}{12}.
\end{equation}
\noindent
{\bf 6.} For $k\ge 1$, set
\begin{equation}
f:=z_1^2+z_2^2+z_2z_3^{2k},\quad G:=\left<g\right>,\ g:=\frac{1}{2}(1,0,1),
\end{equation}
\begin{equation}
\overline{f}:=y_1^2+y_1y_2^k+y_2y_3^2.
\end{equation}
The Jacobian algebra $\Jac(f,\{1\})$ and the bilinear form $J_{f,\{1\}}$ on $\Omega_{f,\{1\}}$ can be calculated as
\begin{eqnarray}
\Jac(f,\{1\})&=&\CC[z_1,z_2,z_3]\left/\left(2z_1,2z_2+z_3^{2k},2kz_2z_3^{2k-1}\right)\right.\\
&\cong&\left<[1],[z_3],\dots,[z_3]^{2k-1},[z_2],[z_2z_3],\dots, [z_2z_3^{2k-2}]\right>_\CC,
\end{eqnarray}
\begin{equation}
J_{f,\{1\}}\left([dz_1\wedge dz_2\wedge dz_3],[z_2z_3^{2k-2}dz_1\wedge dz_2\wedge dz_3]\right)=\frac{1}{8k}.
\end{equation}
As a $\CC$-module, the orbifold Jacobian algebra $\Jac(f,G)$ is of the following form$:$
\begin{equation}
\Jac(f,G)\cong\left<e_\id,[z_3^2],\dots, [z_3^2]^{k-1}, [z_2],[z_2]\cdot [z_3^2],\dots, [z_2]\cdot [z_3^2]^{k-1}\right>_\CC\oplus \left<e_g\right>_\CC.
\end{equation}
Note that $\dim_\CC \Jac(f,G)=2k+1$.
The bilinear form $J_{f,G}$ on $\Omega_{f,G}$ can be calculated as
\begin{subequations}
\begin{eqnarray}
J_{f,\id}\left(e_{\id}\vdash\zeta,[z_2]\cdot [z_3^2]^{k-1}\vdash\zeta\right)&=&
J_{f,\id}\left([dz_1\wedge dz_2\wedge dz_3],[z_2z_3^{2k-2}dz_1\wedge dz_2\wedge dz_3]\right)\\
&=& 2\cdot\frac{1}{8k}=\frac{1}{4k},\\
J_{f,g}\left(e_{g}\vdash\zeta,e_g\vdash\zeta\right)&=&
\frac{1}{4}J_{f,g}\left([dz_2],[dz_2]\right)\\
&=&\frac{1}{4}\cdot(-1)\cdot{2}\cdot\frac{1}{2}=-\frac{1}{4},
\end{eqnarray}
\end{subequations}
which imply the following relations in the orbifold Jacobian algebra $\Jac(f,G):$
\begin{equation}
2[z_2]+[z_3^2]^{k}=0,\quad 
[z_3^2]\circ e_g=0,\quad e_g^2=-k[z_2]\cdot [z_3^2]^{k-1}.
\end{equation}
On the other hand, the Jacobian algebra $\Jac(\overline{f},\{1\})$
is given by
\begin{equation}
\Jac(\overline{f},\{1\})=\CC[y_1,y_2,y_3]\left/\left(2y_1+y_2^k,ky_1y_2^{k-1}+y_3^2,2y_2y_3\right)\right..
\end{equation}
Note that $\dim_\CC \Jac(\overline{f},\{1\})=2k+1$.
Therefore, we have an algebra isomorphism
\begin{equation}
\Jac(\overline{f},\{1\})\stackrel{\cong}{\longrightarrow} \Jac(f,G),
\quad [y_1]\mapsto [z_2],\ [y_2]\mapsto [z_3^2],\ [y_3]\mapsto e_g,
\end{equation}
which is, moreover, an isomorphism of Frobenius algebras since we have
\begin{equation}
J_{\overline{f},\{1\}}\left([dy_1\wedge dy_2\wedge dy_3],[y_1y_2^{k-1}dy_1\wedge dy_2\wedge dy_3]\right)=\frac{1}{4k}.
\end{equation}
\noindent
{\bf 7.} For $k\ge 1$, set
\begin{equation}
f:=z_1^2+z_2^2+z_2z_3^{2k+1},\quad G:=\left<g\right>,\ g:=\frac{1}{2}(0,1,1),
\end{equation}
\begin{equation}
\overline{f}:=y_1^2+y_3y_2^2+y_2y_3^{k+1}.
\end{equation}
The Jacobian algebra $\Jac(f,\{1\})$ and the bilinear form $J_{f,\{1\}}$ on $\Omega_{f,\{1\}}$ can be calculated as
\begin{eqnarray}
\Jac(f,\{1\})&=&\CC[z_1,z_2,z_3]\left/\left(2z_1,2z_2+z_3^{2k+1},(2k+1)z_2z_3^{2k}\right)\right.\\
&\cong&\left<[1],[z_3],\dots,[z_3]^{2k},[z_2],[z_2z_3],\dots, [z_2z_3^{2k-1}]\right>_\CC,
\end{eqnarray}
\begin{equation}
J_{f,\{1\}}\left([dz_1\wedge dz_2\wedge dz_3],[z_2z_3^{2k-1}dz_1\wedge dz_2\wedge dz_3]\right)=\frac{1}{4(2k+1)}.
\end{equation}
As a $\CC$-module, the orbifold Jacobian algebra $\Jac(f,G)$ is of the following form$:$
\begin{equation}
\Jac(f,G)\cong\left<e_\id,[z_3^2],\dots, [z_3^2]^{k}, [z_2z_3],[z_2z_3]\cdot [z_3^2],\dots, [z_2z_3]\cdot [z_3^2]^{k-1}\right>_\CC\oplus \left<e_g\right>_\CC.
\end{equation}
Note that $\dim_\CC \Jac(f,G)=2(k+1)$.
The bilinear form $J_{f,G}$ on $\Omega_{f,G}$ can be calculated as
\begin{subequations}
\begin{eqnarray}
J_{f,\id}\left(e_{\id}\vdash\zeta,[z_2z_3]\cdot [z_3^2]^{k-1}\vdash\zeta\right)&=&
J_{f,\id}\left([dz_1\wedge dz_2\wedge dz_3],[z_2z_3^{2k-1}dz_1\wedge dz_2\wedge dz_3]\right)\\
&=& 2\cdot\frac{1}{4(2k+1)}=\frac{1}{2(2k+1)},\\
J_{f,g}\left(e_{g}\vdash\zeta,e_g\vdash\zeta\right)&=&
\frac{1}{4}J_{f,g}\left([dz_1],[dz_1]\right)\\
&=&\frac{1}{4}\cdot(-1)\cdot{2}\cdot\frac{1}{2}=-\frac{1}{4},
\end{eqnarray}
\end{subequations}

which imply the following relations in the orbifold Jacobian algebra $\Jac(f,G):$
\begin{equation}
2[z_2z_3]+[z_3^2]^{k+1}=0,\quad 
[z_3^2]\circ e_g=0,\quad e_g^2=-\frac{2k+1}{2}[z_2z_3] [z_3^2]^{k-1}.
\end{equation}
On the other hand, the Jacobian algebra $\Jac(\overline{f},\{1\})$
is given by
\begin{eqnarray}
\Jac(\overline{f},\{1\})&=&\CC[y_1,y_2,y_3]\left/\left(2y_1,2y_3y_2+y_3^{k+1},y_2^2+(k+1)y_2y_3^k\right)\right.\\
&\cong&\CC[y_2,y_3]\left/\left(2y_3y_2+y_3^{k+1},y_2^2+(k+1)y_2y_3^k\right)\right..
\end{eqnarray}
Note that $\dim_\CC \Jac(\overline{f},\{1\})=2(k+1)$.
Therefore, we have an algebra isomorphism
\begin{equation}
\Jac(\overline{f},\{1\})\stackrel{\cong}{\longrightarrow} \Jac(f,G),\quad
[y_2]\mapsto e_g-\frac{1}{2}[z_3^2]^k,\ [y_3]\mapsto [z_3^2],
\end{equation}
which is, moreover, an isomorphism of Frobenius algebras since we have
\begin{equation}
J_{\overline{f},\{1\}}\left([dy_1\wedge dy_2\wedge dy_3],[y_2y_3^{k}dy_1\wedge dy_2\wedge dy_3]\right)=\frac{1}{2(2k+1)}.
\end{equation}
\noindent
{\bf 8.} For $k\ge 4$, set
\begin{equation}
f:=z_1^2+z_2^{k-1}+z_2z_3^2,\quad G:=\left<g\right>,\ g:=\frac{1}{2}(1,0,1),
\end{equation}
\begin{equation}
\overline{f}:=y_1^{k-1}+y_1y_2+y_2y_3^2.
\end{equation}
The Jacobian algebra $\Jac(f,\{1\})$ and the bilinear form $J_{f,\{1\}}$ on $\Omega_{f,\{1\}}$ can be calculated as
\begin{eqnarray}
\Jac(f,\{1\})&=&\CC[z_1,z_2,z_3]\left/\left(2z_1,(k-1)z_2^{k-2}+z_3^2,2z_2z_3\right)\right.\\
&\cong&\left<[1],[z_2],\dots,[z_2]^{k-2},[z_3]\right>_\CC,
\end{eqnarray}
\begin{equation}
J_{f,\{1\}}\left([dz_1\wedge dz_2\wedge dz_3],[z_2^{k-2}dz_1\wedge dz_2\wedge dz_3]\right)=\frac{1}{4(k-1)}.
\end{equation}
As a $\CC$-module, the orbifold Jacobian algebra $\Jac(f,G)$ is of the following form$:$
\begin{equation}
\Jac(f,G)\cong\left<e_\id,[z_2],\dots,[z_2]^{k-2}\right>_\CC\oplus \left<e_g, [z_2]e_g,\dots,[z_2]^{k-3}e_g\right>_\CC.
\end{equation}
Note that $\dim_\CC \Jac(f,G)=2k-3$.
The bilinear form $J_{f,G}$ on $\Omega_{f,G}$ can be calculated as
\begin{subequations}
\begin{eqnarray}
J_{f,\id}\left(e_{\id}\vdash\zeta,[z_2]^{k-2}\vdash\zeta\right)&=&
J_{f,\id}\left([dz_1\wedge dz_2\wedge dz_3],[z_2^{k-2}dz_1\wedge dz_2\wedge dz_3]\right)\\
&=& 2\cdot\frac{1}{4(k-1)}=\frac{1}{2(k-1)},\\
J_{f,g}\left(e_{g}\vdash\zeta,[z_2]^{k-3}e_g\vdash\zeta\right)&=&
\frac{1}{4}J_{f,g}\left([dz_2],[z_2^{k-3}dz_2]\right)\\
&=&\frac{1}{4}\cdot(-1)\cdot{2}\cdot\frac{1}{k-1}=-\frac{1}{2(k-1)},
\end{eqnarray}
\end{subequations}
which imply the following relations in the orbifold Jacobian algebra $\Jac(f,G):$
\begin{equation}
[z_2]^{k-2} \circ e_g=0,\quad e_g^2=-[z_2].
\end{equation}
On the other hand, the Jacobian algebra $\Jac(\overline{f},\{1\})$
is given by
\begin{equation}
\Jac(\overline{f},\{1\})=\CC[y_1,y_2,y_3]\left/\left((k-1)y_1^{k-2}+y_2,y_1+y_3^2,2y_2y_3\right)\right..
\end{equation}
Note that $\dim_\CC \Jac(\overline{f},\{1\})=2k-3$.
Therefore, we have an algebra isomorphism
\begin{equation}
\Jac(\overline{f},\{1\})\stackrel{\cong}{\longrightarrow} \Jac(f,G),
\quad [y_1]\mapsto [z_2],\ [y_2]\mapsto -(k-1)[z_2]^{k-2},\ [y_3]\mapsto e_g,
\end{equation}
which is, moreover, an isomorphism of Frobenius algebras since we have
\begin{equation}
J_{\overline{f},\{1\}}\left([dy_1\wedge dy_2\wedge dy_3],[y_1^{k-2}dy_1\wedge dy_2\wedge dy_3]\right)=\frac{1}{2(k-1)}.
\end{equation}
We finished the proof of Theorem~\ref{thm_ADEiso}.
\end{proof}

\end{document}